\documentclass{article}
\usepackage[toc,page]{appendix}
\usepackage[utf8]{inputenc}
\usepackage[english]{babel}
\DeclareMathAlphabet{\mathscr}{OT1}{pzc}{m}{it} 
\usepackage{amsmath}
\usepackage{bbold}
\usepackage{dsfont} 
\usepackage{amssymb}
\usepackage[T1]{fontenc}
\usepackage{mathtools}   
\usepackage{amsfonts} 
\usepackage{bbm}
\usepackage{stmaryrd}
\usepackage{mathrsfs}  
\usepackage{graphicx}
\usepackage{bigints}
\usepackage{relsize}
\usepackage{enumitem}
\usepackage[colorinlistoftodos]{todonotes}
\usepackage{amssymb,amsmath,amsthm}
\numberwithin{equation}{section}
\usepackage{authblk}
\usepackage{pdfpages}
 
\newtheorem{theorem}{Theorem}[section]
\newtheorem{notation}[theorem]{Notation}
\newtheorem{lemma}[theorem]{Lemma}
\newtheorem{proposition}[theorem]{Proposition}
\newtheorem{corollary}[theorem]{Corollary}
\newtheorem{definition}[theorem]{Definition}
\newtheorem{hypothesis}[theorem]{Hypothesis}
\newtheorem{remark}[theorem]{Remark}
\newtheorem{example}[theorem]{Example}
\newenvironment{prooff}[1]{\begin{trivlist}
\item {\it \bf Proof}\quad} {\qed\end{trivlist}}
\usepackage{amsmath,amssymb}
\makeatletter
\newsavebox\myboxA
\newsavebox\myboxB
\newlength\mylenA
\newcommand*\xoverline[2][0.75]{%
    \sbox{\myboxA}{$\m@th#2$}%
    \setbox\myboxB\null
    \ht\myboxB=\ht\myboxA%
    \dp\myboxB=\dp\myboxA%
    \wd\myboxB=#1\wd\myboxA
    \sbox\myboxB{$\m@th\overline{\copy\myboxB}$}
    \setlength\mylenA{\the\wd\myboxA}
    \addtolength\mylenA{-\the\wd\myboxB}%
    \ifdim\wd\myboxB<\wd\myboxA%
       \rlap{\hskip 0.5\mylenA\usebox\myboxB}{\usebox\myboxA}%
    \else
        \hskip -0.5\mylenA\rlap{\usebox\myboxA}{\hskip 0.5\mylenA\usebox\myboxB}%
    \fi}
\makeatother

\title{Path-dependent Martingale Problems and Additive Functionals}

\author{
Adrien BARRASSO \thanks{ENSTA ParisTech, Unit\'e de Math\'ematiques
 appliqu\'ees, 828, boulevard des Mar\'echaux, F-91120 Palaiseau, France 
 and Ecole Polytechnique,  F-91128 Palaiseau, France.
E-mail: \sf adrien.barrasso@ensta-paristech.fr \\}
\qquad\quad
Francesco RUSSO\thanks{ENSTA ParisTech, Unit\'e de Math\'ematiques appliqu\'ees, 828, boulevard des Mar\'echaux, F-91120 Palaiseau, France. E-mail: \sf francesco.russo@ensta-paristech.fr}}


\date{April 19th 2018}

\begin{document}
\maketitle

{\bf Abstract.} The paper 
introduces and investigates the natural extension to the path-dependent setup
of the  usual concept of canonical Markov class introduced by Dynkin
and which is at the basis of the theory of Markov processes. 
That extension, indexed by starting paths rather than starting points 
will be called path-dependent canonical class. 
Associated with this is the generalization  of the notions of semi-group and
of additive functionals  to
the path-dependent framework.
A typical example of such family is constituted
by the laws $(\mathbbm{P}^{s,\eta})_{(s,\eta)\in\mathbbm{R}_+\times \Omega} $,
where for fixed time $s$ and fixed path $\eta$ defined on $[0,s]$,
$\mathbbm{P}^{s,\eta}$
being the (unique) solution of a 
path-dependent martingale problem or more specifically a
 weak solution of a path-dependent 
SDE with jumps, 
with initial path $\eta$.
In a companion paper we apply those results to study path-dependent
analysis problems associated with BSDEs.

\bigskip
{\bf MSC 2010} Classification. 
60H30; 60H10; 35S05; 60J35;  
60J75.

\bigskip
{\bf KEY WORDS AND PHRASES.} Path-dependent martingale problems; 
path-dependent additive functionals.

\section{Introduction}

In this paper we extend some aspects of the theory of Markov processes to the (non-Markovian) path-dependent case.
The crucial object of Markov canonical class introduced by Dynkin is replaced 
with the one of
\textit{path-dependent canonical class}. The associated notion of Markov  semigroup is 
extended to the notion of \textit{path-dependent system of projectors}. 
The classical Markovian concept of (Martingale) Additive Functional is generalized to the one of
\textit{path-dependent (Martingale) Additive Functional}.
We then study some general path-dependent martingale problems with applications to weak solutions of path-dependent SDEs
 (possibly) with jumps and show that, under well-posedness, the solution of the martingale problem provides a path-dependent
 canonical class.
 The companion paper \cite{paperPathDep} will exploit these results to extend the links between BSDEs and (possibly Integro)
 PDEs obtained in \cite{paper3}, to a path-dependent framework.

The theory of Additive Functionals associated to a Markov process was initiated during the early '60s, see the historical papers
\cite{dynkin1959foundations}, \cite{MeyerPhdAF}, \cite{BlumGetAF} and see \cite{dellmeyerD} for a complete theory in the homogeneous setup. The strong links between martingale 
problems and Markov processes were first observed for the study of weak solutions of SDEs in \cite{stroock}, and more generally in \cite{EthierKurz} or \cite{jacod79} for example.
Weak solutions of path-dependent SDEs possibly with jumps were studied in \cite{jacod79}, where the author shows their equivalence to some path-dependent martingale problems and
 proves existence and uniqueness of a solution under Lipschitz conditions.  More recent results concerning path-dependent martingale problems may be found in 
\cite{BionNadal}. However, at our 
knowledge, the structure of the set of solutions for different starting paths was not yet
 studied.

The setup of this paper is the canonical space $(\Omega,\mathcal{F})$ where $\Omega:=\mathbbm{D}(\mathbbm{R}_+,E)$ is the Skorokhod space of cadlag functions from $\mathbbm{R}_+$ into a Polish space $E$ and $\mathcal{F}$ is its Borel $\sigma$-field. 
$X = (X_t)_{t\in\mathbbm{R}_+}$ denotes the canonical process and 
the initial filtration $\mathbbm{F}^o$ is defined by $\mathcal{F}^o_t:=\sigma(X_r|r\in[0,t])$ for all $t\geq 0$. 

A path-dependent canonical class will be a set of probability measures \\ $(\mathbbm{P}^{s,\eta})_{(s,\eta)\in\mathbbm{R}_+\times \Omega}$ defined 
on the canonical space and such that, for some fixed $(s,\eta)$, $\mathbbm{P}^{s,\eta}$ models
a forward (path-dependent) dynamics in law,
 with imposed initial path $\eta$ on the time interval $[0,s]$. As already mentioned, it constitutes the natural  adaptation to the path-dependent world of the notion of 
canonical Markov class $(\mathbbm{P}^{s,x})_{(s,x)\in\mathbbm{R}_+ \times E}$, where in general,  $\mathbbm{P}^{s,x}$ models the law of some Markov
stochastic process,
 with imposed value $x$ at time $s$.
 $\mathbbm{F}^{s,\eta}$ is the augmented initial filtration fulfilling the usual 
conditions. 

In substitution of a Markov semigroup associated with a Markov canonical class, we introduce a path-dependent system of projectors denoted $(P_s)_{s\in\mathbbm{R}_+}$
and a one-to-one connection between them and  path-dependent canonical classes.
Each projector $P_s$ acts on the space of bounded random variables.
 This brings us to introduce the notion of \textbf{weak generator} $(\mathcal{D}(A), A)$ of $(P_s)_{s\in\mathbbm{R}_+}$ which will permit us in the companion paper \cite{paperPathDep} to define \textit{mild} type solutions of path-dependent PDEs of the form 
\begin{equation}\label{PDE}
	\left\{
	\begin{array}{l}
	D\Phi + \frac{1}{2}Tr(\sigma\sigma^{\intercal} \nabla^2\Phi)  + \beta\nabla  \Phi + f(\cdot,\cdot,\Phi,\sigma\sigma^{\intercal} \nabla \Phi )=0\text{ on }[0,T]\times\Omega\\
	\Phi_T=\xi\text{ on }\Omega,
	\end{array}\right.
	\end{equation}
where $D$ is the horizontal derivative and $\nabla$ the vertical gradient in the sense of \cite{dupire,contfournie} and $\beta,\sigma$ are progressively measurable
 path-dependent coefficients. 
 
As mentioned earlier, given a path-dependent canonical class we also introduce the notion of  path-dependent Additive Functional (resp. path-dependent square integrable Martingale Additive Functional), which is   a real-valued random-field  
$M:=(M_{t,u})_{0\leq t\leq u<+\infty}$ such that for any $(s,\eta)\in\mathbbm{R}_+\times \Omega$, there exists a real cadlag $\mathbbm{F}^{s,\eta}$-adapted process (resp. $\mathbbm{F}^{s,\eta}$-square integrable martingale) $M^{s,\eta}$ called the \textbf{cadlag version of $M$ under }$\mathbbm{P}^{s,\eta}$,  and verifying for all $s\leq t\leq u$ that $M_{t,u} = M^{s,\eta}_u-M^{s,\eta}_t$ $\mathbbm{P}^{s,\eta}$ a.s.
Under some reasonable measurability assumptions on the path-dependent canonical class, we extend to our path-dependent setup some classical results of Markov processes theory concerning the quadratic covariation and the angular bracket of square integrable MAFs.
As in the Markovian set-up, examples of  path-dependent canonical classes
arise from solutions of a (this time path-dependent) martingale problem
as we explain below.
Let $\chi$ be a set of cadlag processes adapted to the initial filtration $\mathbbm{F}^o$. 
For some given  $(s,\eta)\in\mathbbm{R}_+\times\Omega$, we say that a probability measure
$\mathbbm{P}^{s,\eta}$ on $(\Omega,\mathcal{F})$ \textbf{solves the martingale problem with respect to $\chi$ starting in $(s,\eta)$} if 
\begin{itemize}
		\item $\mathbbm{P}^{s,\eta}(\omega^s=\eta^s)=1$;
		\item all elements 
 of $\chi$ are on $[s,+\infty[$ $(\mathbbm{P}^{s,\eta},\mathbbm{F}^o)$-martingales. 
\end{itemize}
We show that merely under some well-posedness assumptions,
 the set of solutions
 for varying starting times and paths $(\mathbbm{P}^{s,\eta})_{(s,\eta)\in\mathbbm{R}_+\times \Omega}$ defines a path-dependent canonical class. This in particularly holds for weak solutions of path-dependent SDEs possibly with jumps.

The paper is organized as follows.
In Section \ref{A1}, we introduce the notion of path-dependent canonical class in Definition \ref{DefCondSyst} and of path-dependent system of projectors in Definition \ref{DefCondOp} and prove a one-to-one correspondence between those two concepts in Corollary \ref{EqProbaOp}.
In Section \ref{SectionMAF}, we introduce the notion of path-dependent Additive Functional, in short AF (resp. Martingale Additive Functional, in short MAF). We state in Proposition \ref{VarQuadAF} and Corollary \ref{AFbracket} that for a given  square integrable path-dependent MAF $(M_{t,u})_{(t,u)\in\Delta}$,  we can exhibit
two non-decreasing path-dependent AFs with ${\mathcal L}^1$-terminal value, denoted respectively by  $([M]_{t,u})_{(t,u)\in\Delta}$ 
and    $(\langle M \rangle_{t,u})_{(t,u)\in\Delta}$,
which will play respectively 
the role of a quadratic variation and an angular bracket of it.
Then in Corollary \ref{BracketMAFnew}, we state 
that the Radon-Nikodym derivative of the mentioned angular bracket of 
a square integrable path-dependent MAF with respect to a reference function
 $V$, is 
a progressively measurable process which does not depend on the probability. 
In Section \ref{SMP}, we introduce what we mean by path-dependent martingale
 problem with respect to a set of processes $\chi$,
to  a time $s$ and a starting path $\eta$,
 see Definition \ref{D44}.
Suppose that  $\chi$ is a countable set of  cadlag $\mathbbm{F}^o$-adapted processes which are uniformly bounded on each interval $[0,T]$;
in Proposition \ref{MPimpliesCond}, we state that, whenever
 the  martingale problem with respect to $\chi$
is well-posed, 
then the solution  $(\mathbbm{P}^{s,\eta})_{(s,\eta)\in\mathbbm{R}_+\times \Omega}$ defines a path-dependent canonical class. 
In Subsection \ref{S3_3}, Definition \ref{WeakGen} introduces the notion of weak generator of a path-dependent system of projectors, and Definition \ref{MPop} that of martingale problem associated to a path-dependent operator $(D(A),A)$.
Suppose now that for any $(s,\eta)$
the martingale problem associated with $(D(A),A)$ is well-posed, 
and 
let $(P_s)_{s\in\mathbbm{R}_+}$ be
the  system of projectors associated to the canonical class
constituted by the solutions $(\mathbbm{P}^{s,\eta})_{(s,\eta)\in\mathbbm{R}_+\times \Omega}$. 
Then $(D(A),A)$ is a weak generator of $(P_s)_{s\in\mathbbm{R}_+}$, and $(P_s)_{s\in\mathbbm{R}_+}$ is the unique system of projectors such that this holds. 
In other words, $(P_s)_{s\in\mathbbm{R}_+}$ can be analytically associated to $(D(A),A)$ without ambiguity.
Finally, in Section \ref{SDE}, we consider path-dependent SDEs with jumps, whose coefficients are denoted by $\beta,\sigma,w$. 
If for any couple $(s,\eta)$, the SDE has a unique
 weak solution, then
Theorem \ref{SDEcond} ensures that 
the set of solutions $(\mathbbm{P}^{s,\eta})_{(s,\eta)\in\mathbbm{R}_+\times \Omega}$ defines a path-dependent canonical class. Under the additional assumptions that $\beta,\sigma,w$ are bounded and continuous in $\omega$ for fixed other variables, then Proposition \ref{UniqueMPcontImpliesProg} states that $(s,\eta)\longmapsto \mathbbm{P}^{s,\eta}$ is continuous for the topology of weak convergence. 

\section{Preliminaries}

In the whole paper we will use the following notions, notations and vocabulary.

A topological space $E$ will always be considered as a measurable space with its Borel $\sigma$-field which shall be denoted $\mathcal{B}(E)$ and if $S$ 
is another topological space equipped
with its Borel $\sigma$-field, $\mathcal{B}(E,S)$  will denote the set of Borel functions from $E$ to $S$.
For some fixed $d\in\mathbbm{N}^*$, $\mathcal{C}^{\infty}_c(\mathbbm{R}^d)$ will denote the set of smooth functions with compact support. For fixed $d,k\in\mathbbm{N}^*$, $\mathcal{C}^{k}(\mathbbm{R}^d)$, (resp. $\mathcal{C}^{k}_b(\mathbbm{R}^d)$) will denote the set of functions $k$ times differentiable with continuous (resp. bounded continuous) derivatives.

Let $(\Omega,\mathcal{F})$, $(E,\mathcal{E})$ be two measurable spaces. A measurable mapping from $(\Omega,\mathcal{F})$ to $(E,\mathcal{E})$ shall often be called a \textbf{random variable} (with values in $E$), or in short r.v. If $\mathbbm{T}$ is indices set, a family  $(X_t)_{t\in \mathbbm{T}}$
of r.v. with values in $E$,  will be called a \textbf{random field} (indexed by $\mathbbm{T}$ with values in $E$). In the particular case when 
 $\mathbbm{T}$ is a subinterval of $\mathbbm{R}_+$, $(X_t)_{t\in \mathbbm{T}}$ will be called a \textbf{stochastic process} (indexed by $\mathbbm{T}$ with values in $E$). 
 If the mapping 
$\begin{array}{rcl}
(t,\omega)&\longmapsto&X_t(\omega)\\
(\mathbbm{T}\times\Omega,\mathcal{B}(\mathbbm{T})\otimes\mathcal{F})&
\longrightarrow&(E,\mathcal{E})
\end{array}$
is measurable, then the process (or random field) $(X_t)_{t\in \mathbbm{T}}$
 will be said to be \textbf{measurable} (indexed by $\mathbbm{T}$ with values in $E$).

On a fixed probability space $\left(\Omega,\mathcal{F},\mathbbm{P}\right)$, for any $p \ge 1$, $\mathcal{L}^p$ will denote the set of real-valued random variables with finite $p$-th moment.
Two random fields (or stochastic processes) $(X_t)_{t\in \mathbbm{T}}$, $(Y_t)_{t\in \mathbbm{T}}$ indexed by the same set and with values in the same space will be said to be \textbf{modifications (or versions)
 of each other} if for every $t\in\mathbbm{T}$, $\mathbbm{P}(X_t=Y_t)=1$.
A filtered probability space $\left(\Omega,\mathcal{F},\mathbbm{F}:=(\mathcal{F}_t)_{t\in\mathbbm{R}_+},\mathbbm{P}\right)$  will be called called  \textbf{stochastic basis} and will be said to \textbf{fulfill the usual conditions} if the filtration is right-continuous, if the probability space is complete and if $\mathcal{F}_0$ contains all the $\mathbbm{P}$-negligible sets.
Let us fix a stochastic basis $\left(\Omega,\mathcal{F},\mathbbm{F},\mathbbm{P}\right)$.
If $Y = (Y_t)_{t\in\mathbbm{R}_+}$ is a stochastic process and 
 $\tau$ is  a stopping time, we denote  $Y^{\tau}$ the process $t\mapsto Y_{t\wedge\tau}$ which we call \textbf{stopped process} (by $\tau$).
  If $\mathcal{C}$ is a set of processes, 
we will say that $Y$ is locally in $\mathcal{C}$ (resp. locally verifies some property) if there exist an a.s. increasing sequence of stopping times  $(\tau_n)_{n\geq 0}$ tending a.s. to infinity such that for every $n$, the stopped process $Y^{\tau_n}$ belongs to $\mathcal{C}$ (resp. verifies this property).

Given two martingales $M,N$, we denote by $[M]$ (resp. $[M,N]$) the \textbf{quadratic variation} of $M$ (resp. \textbf{covariation} of $M,N$). If $M,N$ are locally square integrable martingales, $\langle M,N\rangle$ (or simply $\langle M\rangle$ if $M=N$) will denote their (predictable) \textbf{angular bracket}.
Two locally square integrable martingales vanishing at zero $M,N$  will be said to be \textbf{strongly orthogonal} if  $\langle M,N\rangle=0$.
\\
If $A$ is an adapted process with bounded variation then $Var(A)$ (resp. 
$Pos(A)$,  $Neg(A)$) will denote its total variation (resp. positive variation,
  negative variation), see Proposition 3.1, chap. 1 in \cite{jacod}.
In particular for almost all $\omega \in \Omega$, 
$t\mapsto Var_t(A(\omega))$ is the total variation function of
the function $t\mapsto A_t(\omega)$.

\section{Path-dependent canonical classes}\label{A1}

We will introduce here an abstract context  which is relevant for the study of path-dependent stochastic equations. The definitions and results which will be presented here are inspired from the theory of Markov processes and  of additive functionals which one can find for example in \cite{dellmeyerD}.

The first definition refers to the  canonical space that one can find in \cite{jacod79}, see paragraph 12.63.
\begin{notation}\label{canonicalspace}
In the whole section  $E$ will be a fixed  Polish  space, i.e. 
a separable complete metrizable topological space, that we call 
the \textbf{state space}.

$\Omega$ will denote  $\mathbbm{D}(\mathbbm{R}_+,E)$ the  space of functions from $\mathbbm{R}_+$ to $E$ being right-continuous  with left limits (e.g. cadlag).
For every $t\in\mathbbm{R}_+$ we denote the coordinate mapping $X_t:\omega\mapsto\omega(t)$ and we define on $\Omega$ the $\sigma$-field  $\mathcal{F}:=\sigma(X_r|r\in\mathbbm{R}_+)$. 
On the measurable space $(\Omega,\mathcal{F})$, we introduce \textbf{initial filtration}  $\mathbbm{F}^o:=(\mathcal{F}^o_t)_{t\in\mathbbm{R}_+}$, where $\mathcal{F}^o_t:=\sigma(X_r|r\in[0,t])$, and the (right-continuous) \textbf{canonical filtration} $\mathbbm{F}:=(\mathcal{F}_t)_{t\in\mathbbm{R}_+}$, where $\mathcal{F}_t:=\underset{s>t}{\bigcap}\mathcal{F}^o_s$. 
$\left(\Omega,\mathcal{F},\mathbbm{F}\right)$ will be called the \textbf{canonical space} (associated to $E$).
On $\mathbbm{R}_+\times\Omega$, we will denote by $\mathcal{P}ro^o$ (resp.  $\mathcal{P}re^o$) the $\mathbbm{F}^o$-progressive (resp. $\mathbbm{F}^o$-predictable)  $\sigma$-field.
$\Omega$ will be equipped with the Skorokhod topology 
which is Polish since $E$ is Polish (see Theorem 5.6 in chapter 3 of
\cite{EthierKurz}), and for which the Borel $\sigma$-field is $\mathcal{F}$, 
see Proposition 7.1 in chapter 3 of \cite{EthierKurz}.  This in particular implies that $\mathcal{F}$ is separable, being the Borel $\sigma$-field of a separable metric space.

$\mathcal{P}(\Omega)$ will denote the set of probability measures on $\Omega$ and will be equipped with the topology of weak convergence of measures which also makes it a Polish space since $\Omega$ is Polish (see Theorems 1.7 and 3.1 in \cite{EthierKurz} chapter 3). It will also be equipped with the associated Borel $\sigma$-field.

\end{notation}

\begin{notation}\label{Stopped}
For any $\omega\in\Omega$ and $t\in\mathbbm{R}_+$, the path $\omega$ stopped at time $t$ $r\mapsto \omega(r\wedge t)$ will be denoted $\omega^t$.
\end{notation}

\begin{remark}
In  Sections \ref{A1},\ref{SectionMAF} and Subsections \ref{S3_1}, \ref{S3_3}, all notions and results can easily be adapted to different canonical spaces 
$\Omega$: for instance,  $\mathcal{C}(\mathbbm{R}_+,E)$,
 the space of continuous functions from $\mathbbm{R}_+$ to $E$;  
 $\mathcal{C}([0,T],E)$ (resp. $\mathbbm{D}([0,T],E)$) the space of continuous (resp. cadlag) functions from $[0,T]$ to $E$, for some $T > 0$; 
fixing $x\in E$, $\mathcal{C}_x(\mathbbm{R}_+,E)$ (resp. $\mathcal{C}_x([0,T],E)$) the space of continuous functions from $\mathbbm{R}_+$ (resp. $[0,T]$) to
 $E$ starting at $x$ .
\end{remark}

\begin{definition}\label{DefCondSyst}
	A \textbf{path-dependent canonical class} will be a family 
$(\mathbbm{P}^{s,\eta})_{(s,\eta)\in\mathbbm{R}_+\times \Omega}$
 of probability measures defined on the canonical space $(\Omega,\mathcal{F})$,
which verifies the three following items.
	\begin{enumerate}
		\item For every  $(s,\eta)\in\mathbbm{R}_+\times \Omega$, $\mathbbm{P}^{s,\eta}( \omega^s=\eta^s)=1$;
		\item for every $s\in\mathbbm{R}_+$ and $F\in\mathcal{F}$, the mapping
		\\
		$\begin{array}{ccl}
		\eta&\longmapsto& \mathbbm{P}^{s,\eta}(F)\\
		\Omega&\longrightarrow&[0,1]
		\end{array}$ is $\mathcal{F}^o_s$-measurable;
		\item for every  $(s,\eta)\in\mathbbm{R}_+\times \Omega$, $t\geq s$  and $F\in\mathcal{F}$,
		\begin{equation} \label{DE13}
			\mathbbm{P}^{s,\eta}(F|\mathcal{F}^o_t)(\omega)=\mathbbm{P}^{t,\omega}(F)\text{ for }\mathbbm{P}^{s,\eta}\text{ almost all }\omega.
		\end{equation}
	\end{enumerate}
This implies in particular that for every  $(s,\eta)\in\mathbbm{R}_+\times \Omega$ and $t\geq s$, then $(\mathbbm{P}^{t,\omega})_{\omega\in\Omega}$ is a regular 
conditional expectation of $\mathbbm{P}^{s,\eta}$ by $\mathcal{F}^o_t$, see the Definition above Theorem 1.1.6 in \cite{stroock} for instance.
\\
\\
A path-dependent canonical class $(\mathbbm{P}^{s,\eta})_{(s,\eta)\in\mathbbm{R}_+\times \Omega}$ will be said to be \textbf{progressive} if for every $F\in\mathcal{F}$, the mapping 
$(t,\omega)\longmapsto \mathbbm{P}^{t,\omega}(F)$ is $\mathbbm{F}^o$-progressively measurable.
\end{definition}

In concrete examples, path-dependent canonical classes will always verify the following important hypothesis which is a reinforcement of \eqref{DE13}.
\begin{hypothesis}\label{HypClass}
	For every  $(s,\eta)\in\mathbbm{R}_+\times \Omega$, $t\geq s$  and $F\in\mathcal{F}$,
	\begin{equation} \label{DE14}
	\mathbbm{P}^{s,\eta}(F|\mathcal{F}_t)(\omega)=\mathbbm{P}^{t,\omega}(F)\text{ for }\mathbbm{P}^{s,\eta}\text{ almost all }\omega.
	\end{equation}
\end{hypothesis}


\begin{remark}\label{Borel}
	By approximation through simple functions, one can easily show the
following.
Let  $Z$ be a random variable. 	
\begin{itemize}	
\item Let $s\geq 0$.
The functional $\eta \longmapsto \mathbbm{E}^{s,\eta}[Z]$ is $\mathcal{F}^o_s$-measurable and for every  $(s,\eta)\in\mathbbm{R}_+\times \Omega$, $t\geq s$,
		$\mathbbm{E}^{s,\eta}[Z|\mathcal{F}^o_t](\omega)=\mathbbm{E}^{t,\omega}[Z]\text{ for }\mathbbm{P}^{s,\eta}\text{ almost all }\omega$, provided
	previous expectations  are finite;
\item 	if the  path-dependent canonical class is progressive,
$(t,\omega)\longmapsto
	\mathbbm{E}^{t,\omega}[Z]$ is $\mathbbm{F}^o$-progressively measurable, provided
	previous expectations are finite.
\end{itemize}
\end{remark}

 \begin{notation}
\begin{itemize}\
\item 
 $\mathcal{B}_b(\Omega)$  (resp. $\mathcal{B}^+_b(\Omega)$)  
 will denote the space of 
 measurable (resp. non-negative  measurable)
 bounded r.v. 
\item    Let $s \ge 0$.  $\mathcal{B}^s_b(\Omega)$ will denote the space of
  $\mathcal{F}^o_s$-measurable bounded r.v.
\end{itemize}
\end{notation}

\begin{definition}\label{DefCondOp}
\begin{enumerate}\
\item A linear map $Q: \mathcal{B}_b(\Omega) \rightarrow \mathcal{B}_b(\Omega)$
is said {\bf positivity preserving monotonic} 
 if for every $\phi\in\mathcal{B}^+_b(\Omega)$ then 
$Q[\phi]\in\mathcal{B}^+_b(\Omega)$ and 
for every increasing converging (in the pointwise sense) sequence 
$f_n\underset{n}{\longrightarrow}f$ we have  that $Q[f_n]\underset{n}{\longrightarrow} Q[f]$ in the pointwise sense.
\item 	A family $(P_s)_{s\in\mathbbm{R}_+}$ of positivity preserving monotonic linear operators on $\mathcal{B}_b(\Omega)$ 
will be called a \textbf{path-dependent system of projectors}  if it verifies the three following properties.

	\begin{itemize}
		\item For all $s\in\mathbbm{R}_+$, the restriction of $P_s$ to $\mathcal{B}^s_b(\Omega)$
		 coincides with the identity;
		\item for all $s\in\mathbbm{R}_+$, $P_s$ maps $\mathcal{B}_b(\Omega)$ 
		into $\mathcal{B}^s_b(\Omega)$;
		\item for all $s,t\in\mathbbm{R}_+$ with $t\geq s$, $P_s\circ P_t=P_s$.
	\end{itemize}
\end{enumerate}
\end{definition}

\begin{proposition} \label{P19}
Let $(\mathbbm{P}^{s,\eta})_{(s,\eta)\in\mathbbm{R}_+\times \Omega}$ be a  path-dependent canonical class.
For every $s\in\mathbbm{R}_+$, we define $P_s:\phi\longmapsto (\eta\mapsto\mathbbm{E}^{s,\eta}[\phi])$. Then
$(P_s)_{s\in\mathbbm{R}_+}$ defines a path-dependent system of projectors. 
\end{proposition}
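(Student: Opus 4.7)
The plan is to check, one after another, the three defining properties of a path-dependent system of projectors (plus the preliminary fact that each $P_s$ is a positivity preserving monotonic linear map on $\mathcal{B}_b(\Omega)$). Linearity of $P_s$ is inherited from linearity of the integral, and $\|P_s[\phi]\|_\infty \leq \|\phi\|_\infty$ so $P_s$ sends bounded functions to bounded functions. Positivity is immediate, and the monotonic convergence condition for increasing sequences $f_n \uparrow f$ of bounded functions follows from the monotone convergence theorem applied pointwise in $\eta$ under each $\mathbbm{P}^{s,\eta}$. The measurability claim that $P_s[\phi] \in \mathcal{B}^s_b(\Omega)$ is exactly item 2 of Definition \ref{DefCondSyst} when $\phi = \mathbbm{1}_F$, and then extends to arbitrary bounded measurable $\phi$ by the standard monotone-class / simple-function approximation already invoked in Remark \ref{Borel}.

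For the identity property on $\mathcal{B}^s_b(\Omega)$, I would use that any $\phi \in \mathcal{B}^s_b(\Omega)$ is $\sigma(X_r, r \leq s)$-measurable, so $\phi(\omega)$ depends only on $\omega|_{[0,s]}$; but $\omega|_{[0,s]} = \omega^s|_{[0,s]}$, and item 1 of Definition \ref{DefCondSyst} says $\omega^s = \eta^s$ holds $\mathbbm{P}^{s,\eta}$-a.s. Hence $\phi(\omega) = \phi(\eta)$ $\mathbbm{P}^{s,\eta}$-a.s., giving $P_s[\phi](\eta) = \mathbbm{E}^{s,\eta}[\phi] = \phi(\eta)$. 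The only detail to handle cleanly is that an $\mathcal{F}^o_s$-measurable r.v.\ genuinely factors through the stopped path $\omega \mapsto \omega^s$; this is the standard fact that $\mathcal{F}^o_s$ is generated by the coordinates up to time $s$.

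The main (and essentially the only nontrivial) step is the semigroup identity $P_s \circ P_t = P_s$ for $t \geq s$. Fix $\phi \in \mathcal{B}_b(\Omega)$ and $\eta \in \Omega$. By definition
\begin{equation*}
P_s[P_t[\phi]](\eta) = \mathbbm{E}^{s,\eta}\bigl[\omega \mapsto \mathbbm{E}^{t,\omega}[\phi]\bigr].
\end{equation*}
By Remark \ref{Borel} (which extends item 3 of Definition \ref{DefCondSyst} from indicators to bounded measurable $\phi$), one has $\mathbbm{E}^{t,\omega}[\phi] = \mathbbm{E}^{s,\eta}[\phi \mid \mathcal{F}^o_t](\omega)$ for $\mathbbm{P}^{s,\eta}$-a.e.\ $\omega$. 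Taking $\mathbbm{P}^{s,\eta}$-expectation and applying the tower property yields $P_s[P_t[\phi]](\eta) = \mathbbm{E}^{s,\eta}[\phi] = P_s[\phi](\eta)$, as desired.

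The hardest part is not deep; it is simply the bookkeeping of extending items 2 and 3 of Definition \ref{DefCondSyst} from events $F \in \mathcal{F}$ to general bounded measurable $\phi$. This is handled once by the monotone-class theorem (indicators $\to$ simple functions $\to$ bounded measurable via monotone convergence), after which all four verifications reduce to a direct rewriting of the defining properties.
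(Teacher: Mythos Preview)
Your proof is correct and follows essentially the same approach as the paper's own proof, which simply notes that linearity, positivity, and monotonicity come from standard properties of expectation, and that the remaining three items follow directly from Definitions \ref{DefCondSyst}, \ref{DefCondOp} and Remark \ref{Borel}. You have merely unpacked these references in more detail.
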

\begin{proof}
For every $s \ge 0$ each map $P_s$ is linear, positivity preserving and
monotonic using the usual properties of the expectation
under a given probability.
The rest follows taking into account Definitions \ref{DefCondSyst}, \ref{DefCondOp}
and Remark \ref{Borel}. 

\end{proof}

\begin{proposition}\label{OpImpliesProba}
Let $(P_s)_{s\in\mathbbm{R}_+}$ be a path-dependent system of projectors. For any $(s,\eta)\in\mathbbm{R}_+\times\Omega$,  we set
\begin{equation}
\mathbbm{P}^{s,\eta}:\left(\begin{array}{rcl}
F&\longmapsto& P_s[\mathds{1}_F](\eta)\\
\mathcal{F}&\longrightarrow&\mathbbm{R}
\end{array}\right).
\end{equation}
Then for all $(s,\eta)$, $\mathbbm{P}^{s,\eta}$ defines a probability measure and  $(\mathbbm{P}^{s,\eta})_{(s,\eta)\in\mathbbm{R}_+\times \Omega}$ is a path-dependent canonical class.
\end{proposition}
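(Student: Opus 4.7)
The plan is as follows. First I would check that each $\mathbbm{P}^{s,\eta}$ is a genuine probability measure on $(\Omega,\mathcal{F})$. Non-negativity of $F\mapsto P_s[\mathds{1}_F](\eta)$ is immediate from $P_s$ being positivity preserving; the total mass is $1$ because $\mathds{1}_\Omega\in\mathcal{B}^s_b(\Omega)$ and the first projector axiom gives $P_s[\mathds{1}_\Omega]=\mathds{1}_\Omega$; and countable additivity is obtained by writing $\mathds{1}_{\bigcup_n F_n}=\lim_N\sum_{k\leq N}\mathds{1}_{F_k}$ for disjoint $(F_n)$ and invoking linearity together with the monotone convergence property built into the definition of a positivity preserving monotonic operator. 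A routine monotone class argument then promotes this to the integration formula $P_s[\phi](\eta)=\int\phi\,d\mathbbm{P}^{s,\eta}$ for every $\phi\in\mathcal{B}_b(\Omega)$, a representation I will need later.

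Items 1 and 2 of Definition \ref{DefCondSyst} are now quick. Since $\omega\mapsto\omega^s$ is $\mathcal{F}^o_s$-measurable and $\{\eta^s\}$ is closed in the Polish space $\Omega$, the set $\{\omega:\omega^s=\eta^s\}$ lies in $\mathcal{F}^o_s$, so its indicator belongs to $\mathcal{B}^s_b(\Omega)$ and axiom (i) of the projector system forces $\mathbbm{P}^{s,\eta}(\{\omega^s=\eta^s\})=P_s[\mathds{1}_{\{\omega^s=\eta^s\}}](\eta)=1$. Item 2 is just axiom (ii): $\eta\mapsto P_s[\mathds{1}_F](\eta)$ is $\mathcal{F}^o_s$-measurable by the very definition of the projector system.

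The heart of the argument, and the step I expect to be the main obstacle, is item 3, namely showing that $\mathbbm{P}^{t,\cdot}$ is a regular conditional probability of $\mathbbm{P}^{s,\eta}$ given $\mathcal{F}^o_t$ for all $t\geq s$. Unwinding the definition of conditional expectation, this reduces to the identity
\[
P_s[\mathds{1}_A\mathds{1}_F](\eta)\;=\;P_s\bigl[\mathds{1}_A\, P_t[\mathds{1}_F]\bigr](\eta),\qquad A\in\mathcal{F}^o_t,\ F\in\mathcal{F}.
\]
The clean route is through a pull-out lemma: for $A\in\mathcal{F}^o_t$ and $h\in\mathcal{B}_b(\Omega)$, $P_t[\mathds{1}_A h]=\mathds{1}_A\, P_t[h]$. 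This lemma is the subtle ingredient, and it follows from observing that axiom (i) applied to $\mathds{1}_A$ yields $\mathbbm{P}^{t,\omega}(A)=P_t[\mathds{1}_A](\omega)=\mathds{1}_A(\omega)\in\{0,1\}$, so for each $\omega$ the probability $\mathbbm{P}^{t,\omega}$ is concentrated on $A$ or on $A^c$ according as $\omega\in A$ or $\omega\notin A$. Consequently $\mathds{1}_A$ is $\mathbbm{P}^{t,\omega}$-a.s.\ equal to the constant $\mathds{1}_A(\omega)$, and the integral representation from the first paragraph gives $P_t[\mathds{1}_A h](\omega)=\mathds{1}_A(\omega)\int h\,d\mathbbm{P}^{t,\omega}=\mathds{1}_A(\omega)P_t[h](\omega)$. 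With pull-out available, item 3 becomes a one-liner: axiom (iii) gives $P_s[\mathds{1}_A\mathds{1}_F]=P_s\bigl[P_t[\mathds{1}_A\mathds{1}_F]\bigr]$, and pull-out rewrites the inner term as $\mathds{1}_A\, P_t[\mathds{1}_F]$. The decisive observation in the whole proof is thus that each $\mathbbm{P}^{t,\omega}$ sits inside the $\mathcal{F}^o_t$-atom of $\omega$; this is exactly what axiom (i) encodes, and it is what promotes the weak tower identity $P_s\circ P_t=P_s$ to the full conditional expectation property.
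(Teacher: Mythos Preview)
Your proof is correct and follows essentially the same approach as the paper's: both verify the probability-measure axioms from linearity, positivity, and monotonicity of $P_s$, read off items 1 and 2 of Definition \ref{DefCondSyst} directly from the projector axioms, and obtain item 3 via $P_s=P_s\circ P_t$ together with the pull-out $P_t[\mathds{1}_A h]=\mathds{1}_A\,P_t[h]$ for $A\in\mathcal{F}^o_t$. The only cosmetic difference is that the paper justifies the pull-out by first invoking $\mathbbm{P}^{t,\zeta}(\omega^t=\zeta^t)=1$ to get $\mathds{1}_A(\omega)=\mathds{1}_A(\zeta)$ $\mathbbm{P}^{t,\zeta}$-a.s., whereas you argue more directly that $\mathbbm{P}^{t,\omega}(A)=\mathds{1}_A(\omega)\in\{0,1\}$ from axiom (i); these are the same observation.
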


\begin{proof}
We fix $s$ and $\eta$. Since $\emptyset,\Omega\in\mathcal{F}^o_s$, then by
 the first item of Definition \ref{DefCondOp}, $P_s[\mathds{1}_{\emptyset}]=\mathds{1}_{\emptyset}$ and $P_s[\mathds{1}_{\Omega}]=\mathds{1}_{\Omega}$, 
so $\mathbbm{P}^{s,\eta}(\emptyset)=0$ and $\mathbbm{P}^{s,\eta}(\Omega)=1$. For any $F\in\mathcal{F}$, since $P_s$ is positivity preserving and $\mathds{1}_{\emptyset}\leq \mathds{1}_F\leq \mathds{1}_{\Omega}$ then $\mathds{1}_{\emptyset}\leq P_s[\mathds{1}_F]\leq \mathds{1}_{\Omega}$ so, $\mathbbm{P}^{s,\eta}$ takes values in $[0,1]$. If $(F_n)_n$ is a sequence of pairwise disjoint elements of $\mathcal{F}$ then the increasing sequence  
${\sum}_{k=0}^N \mathds{1}_{F_k}$ converges pointwise to $\mathds{1}_{\underset{n}{\bigcup}F_n}$. Since the $P_s$ are linear and monotonic  then
 $\underset{n}{\sum}P_s[\mathds{1}_{F_n}]=P_s[\mathds{1}_{\underset{n}{\bigcup}F_n}]$, hence 
 $\underset{n}{\sum}\mathbbm{P}^{s,\eta}(F_n)=\mathbbm{P}^{s,\eta}\left(\underset{n}{\bigcup}F_n\right)$. So for every $(s,\eta)$, $\mathbbm{P}^{s,\eta}$, is $\sigma$-additive, positive, vanishing in $\emptyset$ and takes value $1$ in $\Omega$ hence is a probability measure.
\\
Then, for any $(s,\eta)$ we have $\mathbbm{P}^{s,\eta}(\omega^s=\eta^s)=P_s[\mathds{1}_{\{\omega^s=\eta^s\}}](\eta)=\mathds{1}_{\{\omega^s=\eta^s\}}(\eta)=1$ since $\{\omega^s=\eta^s\}\in\mathcal{F}^o_s$, so item 1. of Definition  \ref{DefCondSyst} is satisfied. Concerning item 2., at fixed $s\in\mathbbm{R}_+$ and $F\in\mathcal{F}$, we have $(\eta\mapsto \mathbbm{P}^{s,\eta}(F))= P_s[\mathds{1}_F]$ which is $\mathcal{F}^o_s$-measurable since $P_s$ has its range in $\mathcal{B}_b^s(\Omega)$, see Definition \ref{DefCondOp}.

It remains to show item 3. We now fix $(s,\eta)\in\mathbbm{R}_+\times \Omega$, $t\geq s$  and $F\in\mathcal{F}$ and show that \eqref{DE13} holds.
Let $G\in\mathcal{F}^o_t$. 
 We need to show that $\mathbbm{E}^{s,\eta}[\mathds{1}_G\mathds{1}_F]=\mathbbm{E}^{s,\eta}[\mathds{1}_G(\zeta)\mathbbm{E}^{t,\zeta}[\mathds{1}_F]]$. We have
\begin{eqnarray*}
\mathbbm{E}^{s,\eta}[\mathds{1}_G\mathds{1}_F] &=& \mathbbm{E}^{s,\eta}[\mathbbm{E}^{t,\zeta}[\mathds{1}_G(\omega)\mathds{1}_F(\omega)]]\\
&=& \mathbbm{E}^{s,\eta}[\mathbbm{E}^{t,\zeta}[\mathds{1}_G(\zeta)\mathds{1}_F(\omega)]]\\
&=& \mathbbm{E}^{s,\eta}[\mathds{1}_G(\zeta)\mathbbm{E}^{t,\zeta}[\mathds{1}_F(\omega)]],
\end{eqnarray*}
where the first equality comes from the fact that $P_s = P_s\circ P_t$ and the second from the fact that $G\in\mathcal{F}^o_t$ and $\mathbbm{P}^{t,\zeta}(\omega^t=\zeta^t)=1$ so $\mathds{1}_G=\mathds{1}_G(\zeta)$ $\mathbbm{P}^{t,\zeta}$ a.s.
\end{proof}

\begin{corollary}\label{EqProbaOp}
The mapping 
\begin{equation}
\Phi:(\mathbbm{P}^{s,\eta})_{(s,\eta)\in\mathbbm{R}_+\times \Omega}\longmapsto\left(Z\longmapsto (\eta\mapsto\mathbbm{E}^{s,\eta}[Z])\right)_{s\in\mathbbm{R}_+},
\end{equation}
is a bijection between the set of path-dependent canonical classes and the set of path-dependent system of projectors, whose  reciprocal map is given by
\begin{equation}
\Phi^{-1}:(P_s)_{s\in\mathbbm{R}_+}\longmapsto\left(F\mapsto P_s[\mathds{1}_F](\eta)\right)_{(s,\eta)\in\mathbbm{R}_+\times \Omega}.
\end{equation}
\end{corollary}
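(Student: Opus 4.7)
The plan is to observe that Propositions \ref{P19} and \ref{OpImpliesProba} already show that the prescriptions defining $\Phi$ and $\Phi^{-1}$ send objects of one type to objects of the other. Hence only two things remain: checking that $\Phi$ and $\Phi^{-1}$ are mutually inverse as set-theoretic maps.

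First I would treat the direction $\Phi^{-1}\circ\Phi=\mathrm{id}$, which is immediate. Starting from a path-dependent canonical class $(\mathbbm{P}^{s,\eta})$, the operators produced by $\Phi$ are $P_s[\phi](\eta)=\mathbbm{E}^{s,\eta}[\phi]$, and then $\Phi^{-1}$ gives back the measures $\widetilde{\mathbbm{P}}^{s,\eta}(F)=P_s[\mathds{1}_F](\eta)=\mathbbm{E}^{s,\eta}[\mathds{1}_F]=\mathbbm{P}^{s,\eta}(F)$ for every $F\in\mathcal{F}$, so $\widetilde{\mathbbm{P}}^{s,\eta}=\mathbbm{P}^{s,\eta}$.

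The other direction $\Phi\circ\Phi^{-1}=\mathrm{id}$ is where a small argument is needed. Starting from a system of projectors $(P_s)$, one defines $\mathbbm{P}^{s,\eta}(F):=P_s[\mathds{1}_F](\eta)$, and then $\Phi$ produces operators $\widetilde{P}_s[\phi](\eta):=\mathbbm{E}^{s,\eta}[\phi]$. On indicator functions, $\widetilde{P}_s[\mathds{1}_F](\eta)=\mathbbm{P}^{s,\eta}(F)=P_s[\mathds{1}_F](\eta)$. By linearity of both operators, the identity $\widetilde{P}_s=P_s$ extends to simple functions. The crucial step is then to pass from simple functions to arbitrary $\phi\in\mathcal{B}_b(\Omega)$: write $\phi=\phi^+-\phi^-$, approximate each nonnegative part from below by an increasing sequence of nonnegative simple functions $(\phi_n)$, and apply on one hand monotone convergence for expectations under $\mathbbm{P}^{s,\eta}$ (which governs $\widetilde{P}_s$) and on the other hand the positivity-preserving monotonic property of $P_s$ from Definition \ref{DefCondOp}; both sides produce the same pointwise limit, giving $\widetilde{P}_s[\phi]=P_s[\phi]$.

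The main (and really only) obstacle is this monotone-class-style extension, and it is precisely the reason the definition of path-dependent system of projectors builds in the monotonic continuity axiom: without it one could not lift the identification from indicators to all of $\mathcal{B}_b(\Omega)$. Everything else is bookkeeping on the two constructions already carried out.
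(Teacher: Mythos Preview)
Your proof is correct and follows essentially the same approach as the paper's: both reduce the only nontrivial direction ($\Phi\circ\Phi^{-1}=\mathrm{id}$) to checking equality on indicators, extending by linearity to simple functions, and then using the monotonic continuity axiom of Definition \ref{DefCondOp} to reach all of $\mathcal{B}_b(\Omega)$. The paper phrases the other direction as injectivity of $\Phi$ (two probabilities agreeing on all bounded r.v.\ must coincide) rather than computing $\Phi^{-1}\circ\Phi$ explicitly, but this is a cosmetic difference.
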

\begin{proof}
$\Phi$ is by Proposition \ref{P19} well-defined.
Moreover it is injective since if $\mathbbm{P}^1$ and $\mathbbm{P}^2$ are two probabilities such that respective expectations of all the bounded r.v. are 
the same then $\mathbbm{P}^1= \mathbbm{P}^2$. 
 Then given a path-dependent system of projectors $(P_s)_{s\in\mathbbm{R}_+}$, 
 by Proposition \ref{OpImpliesProba} $\left(\mathbbm{P}^{s,\eta}:F\mapsto P_s[\mathds{1}_F](\eta)\right)_{(s,\eta)\in\mathbbm{R}_+\times \Omega}$ is a path-dependent canonical class. 
It is then enough to show that the image through $\Phi$ of that  path-dependent canonical class
is indeed $(P_s)_{s\in\mathbbm{R}_+}$.
Let $(Q_s)_{s\in\mathbbm{R}_+}$ denote its image by $\Phi$, in order to conclude we are left to show that $Q_s=P_s$ for all $s$. 
\\
We fix $s$. For every $F\in\mathcal{F}, \eta \in \Omega $ we have $Q_s[\mathds{1}_F](\eta)= \mathbbm{P}^{s,\eta}(F)=P_s[\mathds{1}_F](\eta)$ so $Q_s$ and $P_s$ coincide on the indicator functions, hence on the simple functions by linearity, and everywhere by monotonicity and the fact that every bounded Borel function is the limit of an increasing sequence of simple functions.
\end{proof}

\begin{definition}\label{ProbaOp}
From now on, two elements mapped by the previous bijection will be said to be \textbf{associated}. 
\end{definition}

\begin{remark} \label{RMarkov}
Path-dependent canonical classes naturally extend  canonical Markov classes (see Definition C.5 in \cite{paper3} for instance) as follows. \\
	Let $(\mathbbm{P}^{s,x})_{(s,x)\in\mathbbm{R}_+\times E}$ be a canonical Markov class with state space $E$ and let $(P_{s,t})_{0\leq s\leq t}$ denote its transition kernel, see Definition C.3 in \cite{paper3}. \\ 
For all $(s,\eta)\in\mathbbm{R}_+\times \Omega$, let $\mathbbm{P}^{s,\eta}$
 be the unique probability measure on $(\Omega,\mathcal{F})$ such that  $\mathbbm{P}^{s,\eta}(\omega^s=\eta^s)$ and $\mathbbm{P}^{s,\eta}$ coincides on $\sigma(X_r|r\geq s)$ with  $\mathbbm{P}^{s,\eta(s)}$. Then $(\mathbbm{P}^{s,\eta})_{(s,\eta)\in\mathbbm{R}_+\times \Omega}$ is a path-dependent canonical class. Let $(P_s)_{s\in\mathbbm{R}_+}$ denote the associated path-dependent system of projectors. Then for all bounded Borel $\phi:E\mapsto\mathbbm{R}$, $\eta\in\Omega$ and $0\leq s\leq t$ we have
	\begin{equation}
	P_s[\phi\circ X_t](\eta)=\mathbbm{E}^{s,\eta}[\phi(X_t)]=\mathbbm{E}^{s,\eta(s)}[\phi(X_t)]=P_{s,t}[\phi](\eta(s)).
	\end{equation}
\end{remark}
%

\begin{notation}
For the rest of this section, we are given a   path-dependent canonical class $(\mathbbm{P}^{s,\eta})_{(s,\eta)\in\mathbbm{R}_+\times \Omega}$ and $(P_s)_{s\in\mathbbm{R}_+}$ denotes the associated path-dependent system of projectors.
\end{notation}

\begin{definition}\label{defcompletion}
Let $\mathbbm{P}$ be a probability on $(\Omega,\mathcal{F})$. 
If $\mathcal{G}$ be a sub-$\sigma$-field of $\mathcal{F}$, we call \textbf{$\mathbbm{P}$-closure} of $\mathcal{G}$ the $\sigma$-field generated by $\mathcal{G}$ and the set of $\mathbbm{P}$-negligible sets. We denote it $\mathcal{G}^\mathbbm{P}$.
In the particular case $\mathcal{G}=\mathcal{F}$, we call $\mathcal{F}^\mathbbm{P}$ \textbf{$\mathbbm{P}$-completion} of  $\mathcal{F}$.

\end{definition}

\begin{remark}\label{remcompletion}
Thanks to Remark 32.b) in Chapter II of \cite{dellmeyer75}, we have an equivalent definition of the $\mathbbm{P}$-closure of some sub-$\sigma$-field $\mathcal{G}$ of $\mathcal{F}$ which can be 
characterized by the following property: $B\in\mathcal{G}^{\mathbbm{P}}$ if and only if there exist $F\in\mathcal{G}$
such that 
 $\mathds{1}_B=\mathds{1}_F$ $\mathbbm{P}$ a.s. 
\\
 Moreover, $\mathbbm{P}$ can be extended to a probability on $\mathcal{G}^{\mathbbm{P}}$ by setting $\mathbbm{P}(B):=\mathbbm{P}(F)$ for such events.  
\end{remark}

\begin{notation}\label{CompletedBasis}
For any $(s,\eta)\in\mathbbm{R}_+\times \Omega$ we will consider the  stochastic basis $\left(\Omega,\mathcal{F}^{s,\eta},\mathbbm{F}^{s,\eta}:=(\mathcal{F}^{s,\eta}_t)_{t\in\mathbbm{R}_+},\mathbbm{P}^{s,\eta}\right)$ where $\mathcal{F}^{s,\eta}$ is  the $\mathbbm{P}^{s,\eta}$-completion of $\mathcal{F}$,  $\mathbbm{P}^{s,\eta}$ is extended to $\mathcal{F}^{s,\eta}$ and   $\mathcal{F}^{s,\eta}_t$ is 
the $\mathbbm{P}^{s,\eta}$-closure of $\mathcal{F}_t$ for every $t\in\mathbbm{R}_+$. 
\end{notation}
We remark that, for any $(s,\eta)\in\mathbbm{R}_+\times \Omega$, $\left(\Omega,\mathcal{F}^{s,\eta},\mathbbm{F}^{s,\eta},\mathbbm{P}^{s,\eta}\right)$ is a stochastic basis fulfilling the usual conditions, see 1.4 in \cite{jacod} Chapter I. 
\\
\\
A direct consequence of Remark 32.b) in Chapter II of \cite{dellmeyer75} is the following.
\begin{proposition}\label{Fversion}
Let $\mathcal{G}$ be a sub-$\sigma$-field of $\mathcal{F}$, $\mathbbm{P}$ a probability on $(\Omega,\mathcal{F})$ and $\mathcal{G}^{\mathbbm{P}}$ the $\mathbbm{P}$-closure of $\mathcal{G}$. Let $Z^{\mathbbm{P}}$ be a real $\mathcal{G}^{\mathbbm{P}}$-measurable random variable. There exists a $\mathcal{G}$-measurable random variable $Z$ such that $Z=Z^{\mathbbm{P}}$ $\mathbbm{P}$-a.s.
\end{proposition}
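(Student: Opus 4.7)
The plan is a standard layer-cake induction, using Remark \ref{remcompletion} (which rephrases the cited Remark 32.b) of \cite{dellmeyer75}) as the base ingredient.

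First, for the indicator case: given $B\in\mathcal{G}^{\mathbbm{P}}$, Remark \ref{remcompletion} yields $F\in\mathcal{G}$ with $\mathds{1}_B=\mathds{1}_F$ $\mathbbm{P}$-a.s., so $Z:=\mathds{1}_F$ does the job. By linearity, the conclusion extends immediately to simple $\mathcal{G}^{\mathbbm{P}}$-measurable functions: for $Z^{\mathbbm{P}}=\sum_{k=1}^{n}\alpha_k\mathds{1}_{B_k}$ pick $F_k\in\mathcal{G}$ with $\mathds{1}_{B_k}=\mathds{1}_{F_k}$ $\mathbbm{P}$-a.s.\ and set $Z:=\sum_{k=1}^{n}\alpha_k\mathds{1}_{F_k}$.

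Next, assume $Z^{\mathbbm{P}}\geq 0$. Approximate it pointwise from below by the canonical sequence of dyadic simple functions
\begin{equation*}
Z^{\mathbbm{P}}_n:=\sum_{k=1}^{n2^n}\tfrac{k-1}{2^n}\mathds{1}_{\{(k-1)/2^n\leq Z^{\mathbbm{P}}<k/2^n\}}+n\,\mathds{1}_{\{Z^{\mathbbm{P}}\geq n\}},
\end{equation*}
and for each $n$ use the simple-function case to pick a $\mathcal{G}$-measurable $Z_n$ with $Z_n=Z^{\mathbbm{P}}_n$ $\mathbbm{P}$-a.s. The delicate point, and the only one requiring care, is that the $Z_n$ need not be increasing \emph{everywhere}, only $\mathbbm{P}$-a.s.; so I simply set $Z:=\limsup_n Z_n$, which is $\mathcal{G}$-measurable without any completion. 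Outside the countable union $N:=\bigcup_n\{Z_n\neq Z^{\mathbbm{P}}_n\}$, which is $\mathbbm{P}$-negligible, one has $Z_n=Z^{\mathbbm{P}}_n\nearrow Z^{\mathbbm{P}}$, hence $Z=Z^{\mathbbm{P}}$ on $N^c$, i.e.\ $\mathbbm{P}$-a.s.

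Finally, for general real-valued $Z^{\mathbbm{P}}$, decompose $Z^{\mathbbm{P}}=(Z^{\mathbbm{P}})^+-(Z^{\mathbbm{P}})^-$, apply the non-negative case to each part to obtain $\mathcal{G}$-measurable $Z^+,Z^-$ with $Z^\pm=(Z^{\mathbbm{P}})^\pm$ $\mathbbm{P}$-a.s., and set $Z:=Z^+-Z^-$ on the $\mathcal{G}$-measurable set where both are finite, say $0$ elsewhere (this last redefinition changes $Z$ only on a $\mathbbm{P}$-null set since $Z^{\mathbbm{P}}$ is real-valued). The hardest step is really the third: getting a bona fide $\mathcal{G}$-measurable version rather than just an a.s.\ limit. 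The $\limsup$ device sidesteps this because it produces a $\mathcal{G}$-measurable function pointwise, and the null-set argument then delivers the $\mathbbm{P}$-a.s.\ identification.
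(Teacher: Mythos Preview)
Your argument is correct and is exactly the standard layer-cake construction that the paper implicitly invokes: the paper gives no proof beyond declaring the proposition ``a direct consequence of Remark 32.b) in Chapter II of \cite{dellmeyer75}'', and your proof simply spells out that consequence starting from the indicator case (which is precisely Remark \ref{remcompletion}). One very minor point: in the non-negative step your $\limsup$ could a priori take the value $+\infty$ on the $\mathbbm{P}$-null set $N$; you fix this in the final step by truncating to $0$ where $Z^{\pm}$ fail to be finite, but it would be slightly cleaner to already redefine $Z:=\limsup_n Z_n$ to be $0$ on the $\mathcal{G}$-measurable set $\{\limsup_n Z_n=+\infty\}$ at the non-negative stage.
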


Proposition \ref{Fversion} yields the following.

\begin{proposition}\label{ConditionalExp}
	Let $\mathbbm{P}$ be a probability measure on $(\Omega,\mathcal{F})$, let $\mathbbm{G}:=(\mathcal{G}_t)_{t\in\mathbbm{R}_+}$ be a filtration and $\mathbbm{G}^{\mathbbm{P}}$ denote $(\mathcal{G}^{\mathbbm{P}}_t)_{t\in\mathbbm{R}_+}$. Let $Z$
 be a positive or $\mathcal{L}^1$-random variable and $t\in\mathbbm{R}_+$. 
Then $\mathbbm{E}[Z|\mathcal{G}_t]=\mathbbm{E}[Z|\mathcal{G}^{\mathbbm{P}}_t]$ $\mathbbm{P}$ a.s. In particular, $(\mathbbm{P},\mathbbm{G})$-martingales are also $(\mathbbm{P},\mathbbm{G}^{\mathbbm{P}})$-martingales.
\end{proposition}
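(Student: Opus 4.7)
The plan is to show the two conditional expectations agree almost surely by exhibiting a common $\mathcal{G}_t$-measurable version, using the characterization of the closure given in Proposition \ref{Fversion}, and then to deduce the martingale statement as an immediate corollary.

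First I would reduce to the case where $Z$ is positive or integrable so that both $U := \mathbbm{E}[Z|\mathcal{G}_t]$ and $V := \mathbbm{E}[Z|\mathcal{G}^{\mathbbm{P}}_t]$ are well-defined. Note that $U$ is $\mathcal{G}_t$-measurable, hence automatically $\mathcal{G}^{\mathbbm{P}}_t$-measurable since $\mathcal{G}_t \subset \mathcal{G}^{\mathbbm{P}}_t$. On the other hand $V$ is only $\mathcal{G}^{\mathbbm{P}}_t$-measurable a priori, so I would invoke Proposition \ref{Fversion} applied to the $\sigma$-field $\mathcal{G}_t$ (or its positive/negative parts if $V$ is not positive, treated separately) to produce a $\mathcal{G}_t$-measurable random variable $\widetilde{V}$ with $\widetilde{V} = V$ $\mathbbm{P}$-a.s.

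Next I would check that $\widetilde{V}$ coincides with $U$ $\mathbbm{P}$-a.s. by verifying the defining property of $\mathbbm{E}[Z|\mathcal{G}_t]$. For every $A \in \mathcal{G}_t$ we have $A \in \mathcal{G}^{\mathbbm{P}}_t$, so
\begin{equation*}
\mathbbm{E}[\mathds{1}_A \widetilde{V}] = \mathbbm{E}[\mathds{1}_A V] = \mathbbm{E}[\mathds{1}_A \mathbbm{E}[Z|\mathcal{G}^{\mathbbm{P}}_t]] = \mathbbm{E}[\mathds{1}_A Z].
\end{equation*}
Since $\widetilde{V}$ is $\mathcal{G}_t$-measurable and satisfies this identity for every $A \in \mathcal{G}_t$, by uniqueness of the conditional expectation $\widetilde{V} = U$ $\mathbbm{P}$-a.s. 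Combining with $\widetilde{V} = V$ $\mathbbm{P}$-a.s. yields $U = V$ $\mathbbm{P}$-a.s.

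For the second assertion, let $M = (M_t)_{t\in\mathbbm{R}_+}$ be a $(\mathbbm{P},\mathbbm{G})$-martingale. Each $M_t$ is $\mathcal{G}_t$-measurable and hence $\mathcal{G}^{\mathbbm{P}}_t$-measurable, so $M$ is $\mathbbm{G}^{\mathbbm{P}}$-adapted; integrability is unchanged. For $s\le t$, applying the first part to $Z = M_t$ gives $\mathbbm{E}[M_t|\mathcal{G}^{\mathbbm{P}}_s] = \mathbbm{E}[M_t|\mathcal{G}_s] = M_s$ $\mathbbm{P}$-a.s., which is precisely the martingale property with respect to $\mathbbm{G}^{\mathbbm{P}}$. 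The only subtle step is the appeal to Proposition \ref{Fversion}, but this has already been established as a direct consequence of the alternative characterization of the $\mathbbm{P}$-closure recalled in Remark \ref{remcompletion}, so no real obstacle is expected.
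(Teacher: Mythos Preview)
Your proof is correct and follows exactly the approach the paper indicates: the paper simply states that the proposition is a consequence of Proposition \ref{Fversion}, and your argument spells out precisely that derivation. There is nothing to add.
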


According to Proposition \ref{ConditionalExp} for $ \mathbbm{P} =
 \mathbbm{P}^{s,\eta}$, the
related conditional expectations
with respect to $\mathcal{F}_t^{s,\eta}$
coincide with conditional expectations with respect to $\mathcal{F}_t$.
For that reason we will only use the notation
 $\mathbbm{E}^{s,\eta}[\,\cdot\, \vert \mathcal{F}_t]$
 omitting  the $(s,\eta)$-superscript
over $\mathcal{F}_t$.

In the next proposition, $\mathcal{F}_t^{o,s,\eta}$
 will denote for any $(s,\eta)\in\mathbbm{R}_+\times\Omega$ and $t\geq s$ the $\mathbbm{P}^{s,\eta}$-closure of $\mathcal{F}^o_t$.

\begin{proposition}\label{CompFiltIni}
Assume that Hypothesis \ref{HypClass} holds.
For any $(s,\eta)\in\mathbbm{R}_+\times\Omega$ and $t\geq s$,  $\mathcal{F}_t^{o,s,\eta}=\mathcal{F}_t^{s,\eta}$.
\end{proposition}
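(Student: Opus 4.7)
The plan is to prove the two inclusions separately. The inclusion $\mathcal{F}_t^{o,s,\eta}\subseteq \mathcal{F}_t^{s,\eta}$ is immediate: since $\mathcal{F}^o_t\subseteq \mathcal{F}_t\subseteq \mathcal{F}_t^{s,\eta}$ and $\mathcal{F}_t^{s,\eta}$ is already $\mathbbm{P}^{s,\eta}$-closed, it contains the $\mathbbm{P}^{s,\eta}$-closure $\mathcal{F}_t^{o,s,\eta}$ of $\mathcal{F}^o_t$. So the whole content of the statement lies in the reverse inclusion $\mathcal{F}_t^{s,\eta}\subseteq \mathcal{F}_t^{o,s,\eta}$.

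For that, the key observation I will exploit is that Hypothesis \ref{HypClass} together with item 3 of Definition \ref{DefCondSyst} forces the $\mathcal{F}^o_t$- and $\mathcal{F}_t$-conditional expectations under $\mathbbm{P}^{s,\eta}$ to coincide on indicator functions. Indeed, for any $F\in\mathcal{F}$ we have on one hand $\mathbbm{P}^{s,\eta}(F\mid \mathcal{F}^o_t)(\omega)=\mathbbm{P}^{t,\omega}(F)$ by \eqref{DE13}, and on the other hand $\mathbbm{P}^{s,\eta}(F\mid \mathcal{F}_t)(\omega)=\mathbbm{P}^{t,\omega}(F)$ by \eqref{DE14}, both equalities holding $\mathbbm{P}^{s,\eta}$-a.s.

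Now take $F\in \mathcal{F}_t$. Since $\mathds{1}_F$ is $\mathcal{F}_t$-measurable, the left-hand side of \eqref{DE14} equals $\mathds{1}_F$ itself $\mathbbm{P}^{s,\eta}$-a.s., so the $\mathcal{F}^o_t$-measurable random variable $Z:=\mathbbm{P}^{s,\eta}(F\mid\mathcal{F}^o_t)$ satisfies $Z=\mathds{1}_F$ $\mathbbm{P}^{s,\eta}$-a.s. In particular $Z$ takes values in $\{0,1\}$ outside a $\mathbbm{P}^{s,\eta}$-null set. Setting $B:=\{Z\geq 1/2\}\in \mathcal{F}^o_t$, we obtain $\mathds{1}_B=\mathds{1}_F$ $\mathbbm{P}^{s,\eta}$-a.s. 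By the characterization of $\mathbbm{P}^{s,\eta}$-closure recalled in Remark \ref{remcompletion}, this exactly means that $F\in \mathcal{F}_t^{o,s,\eta}$. Thus $\mathcal{F}_t\subseteq \mathcal{F}_t^{o,s,\eta}$; since the latter is $\mathbbm{P}^{s,\eta}$-closed, taking closures gives $\mathcal{F}_t^{s,\eta}\subseteq \mathcal{F}_t^{o,s,\eta}$.

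There is no real obstacle: the argument is a standard regularization-of-conditional-expectation trick, and the only point to pay attention to is ensuring that we actually produce a genuine element of $\mathcal{F}^o_t$ (not merely of its $\mathbbm{P}^{s,\eta}$-closure), which the level-set $\{Z\geq 1/2\}$ achieves because $Z$ is chosen as a proper $\mathcal{F}^o_t$-measurable version of the conditional expectation.
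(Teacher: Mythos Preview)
Your proof is correct and follows essentially the same route as the paper's: show $\mathcal{F}_t\subseteq\mathcal{F}_t^{o,s,\eta}$ by producing, for each $F\in\mathcal{F}_t$, an $\mathcal{F}^o_t$-measurable $Z$ equal to $\mathds{1}_F$ a.s.\ and then taking a level set. The only cosmetic differences are that the paper defines $Z(\omega):=\mathbbm{P}^{t,\omega}(F)$ directly (using item~2 of Definition~\ref{DefCondSyst} for $\mathcal{F}^o_t$-measurability, so \eqref{DE13} is not needed) and uses the level set $Z^{-1}(\{1\})$ instead of $\{Z\geq 1/2\}$.
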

\begin{proof}
We fix $s,\eta,t$. Since inclusion $\mathcal{F}_t^{o,s,\eta}\subset\mathcal{F}_t^{s,\eta}$ is obvious, we  show the converse inclusion. 
\\
Let $F^{s,\eta}\in\mathcal{F}_t^{s,\eta}$. By  Remark \ref{remcompletion}, there exists $F\in\mathcal{F}_t$, such that $\mathds{1}_{F^{s,\eta}}=\mathds{1}_{F}$ $\mathbbm{P}^{s,\eta}$ a.s. It is therefore sufficient to prove the existence of some $F^o\in\mathcal{F}^o_t$ such that $\mathds{1}_{F^o}=\mathds{1}_{F}$ $\mathbbm{P}^{s,\eta}$ a.s. (and therefore $\mathds{1}_{F^o}=\mathds{1}_{F^{s,\eta}}$ $\mathbbm{P}^{s,\eta}$ a.s.) to conclude that $F^{s,\eta}\in\mathcal{F}_t^{o,s,\eta}$.
\\
\\
We set $Z:\begin{array}{rcl} \omega &\longmapsto&\mathbbm{P}^{t,\omega}(F)\\ \Omega &\longrightarrow &[0,1]\end{array}$. 
By \eqref{DE14} and the fact that $F\in\mathcal{F}_t$, we have  
\begin{equation}\label{eqCompFilt}
Z(\omega)=\mathbbm{P}^{t,\omega}(F)=\mathbbm{E}^{s,\eta}[\mathds{1}_F|\mathcal{F}_t](\omega)=\mathds{1}_F(\omega)\quad \mathbbm{P}^{s,\eta}\text{a.s.}
\end{equation}
By Definition \ref{DefCondSyst}, $Z$ is $\mathcal{F}^o_t$-measurable, so $F^o:=Z^{-1}(\{1\})$ belongs to $\mathcal{F}^o_t$, and we will proceed showing that $\mathds{1}_{F^o}=\mathds{1}_{F}$ $\mathbbm{P}^{s,\eta}$ a.s.
\\
By construction, $\mathds{1}_{F^o}(\omega)=1$ iff $\mathbbm{P}^{t,\omega}(F)=1$ and  $\mathds{1}_{F^o}(\omega)=0$ iff $\mathbbm{P}^{t,\omega}(F)\in[0,1[$. So 
\begin{equation}
\begin{array}{rcl}
&&\{\omega:\mathds{1}_{F^o}(\omega)\neq\mathds{1}_{F}(\omega)\}\\
&=&\{\omega:\mathds{1}_{F^o}(\omega)=1\text{ and }\mathds{1}_{F}(\omega)=0\}\bigcup\{\omega:\mathds{1}_{F^o}(\omega)=0\text{ and }\mathds{1}_{F}(\omega)=1\}\\
&=&\{\omega:\mathbbm{P}^{t,\omega}(F)=1\text{ and }\mathds{1}_{F}(\omega)=0\}\bigcup\{\omega:\mathbbm{P}^{t,\omega}(F)\in[0,1[\text{ and }\mathds{1}_{F}(\omega)=1\}\\
& \subset &\{\omega:\mathbbm{P}^{t,\omega}(F)\neq\mathds{1}_{F}(\omega)\},
\end{array}
\end{equation}
where the latter set is $\mathbbm{P}^{s,\eta}$-negligible by \eqref{eqCompFilt}.
\end{proof}

Combining Propositions \ref{Fversion} and \ref{CompFiltIni}, we have the following.
\begin{corollary}\label{F0version}
Assume that Hypothesis \ref{HypClass} holds and let us fix $(s,\eta)\in\mathbbm{R}_+\times\Omega$ and $t\geq s$. Given an  $\mathcal{F}^{s,\eta}_t$-measurable r.v.  $Z^{s,\eta}$, there exists  an $\mathcal{F}^o_t$-measurable r.v. $Z^o$
  such that $Z^{s,\eta}=Z^o$ $\mathbbm{P}^{s,\eta}$ a.s.
\end{corollary}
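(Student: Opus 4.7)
The plan is to obtain the corollary as an immediate combination of the two propositions cited just before its statement, with Hypothesis \ref{HypClass} entering only through its use in Proposition \ref{CompFiltIni}.

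First, I would fix $(s,\eta)\in\mathbbm{R}_+\times\Omega$ and $t\geq s$ and start from an arbitrary $\mathcal{F}^{s,\eta}_t$-measurable random variable $Z^{s,\eta}$. By the definition of $\mathcal{F}^{s,\eta}_t$ in Notation \ref{CompletedBasis}, this $\sigma$-field is the $\mathbbm{P}^{s,\eta}$-closure of $\mathcal{F}_t$. Invoking Proposition \ref{CompFiltIni}, which requires Hypothesis \ref{HypClass}, I would rewrite $\mathcal{F}^{s,\eta}_t = \mathcal{F}^{o,s,\eta}_t$, the $\mathbbm{P}^{s,\eta}$-closure of the initial $\sigma$-field $\mathcal{F}^o_t$. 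Thus $Z^{s,\eta}$ is measurable with respect to the $\mathbbm{P}^{s,\eta}$-closure of $\mathcal{F}^o_t$.

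Next, I would apply Proposition \ref{Fversion} with the sub-$\sigma$-field $\mathcal{G}:=\mathcal{F}^o_t$ and the probability $\mathbbm{P}:=\mathbbm{P}^{s,\eta}$. That proposition directly produces a $\mathcal{G}$-measurable (i.e.\ $\mathcal{F}^o_t$-measurable) random variable $Z^o$ satisfying $Z^o = Z^{s,\eta}$ $\mathbbm{P}^{s,\eta}$ almost surely, which is exactly the conclusion sought. Strictly speaking Proposition \ref{Fversion} is stated for real random variables, so if $Z^{s,\eta}$ were vector- or extended-real-valued one would apply it componentwise or after a bounded bijection; since the statement of the corollary is formulated for r.v.\ in the same sense as Proposition \ref{Fversion}, no extra work is needed.

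There is no genuine obstacle here: the whole content has been front-loaded into Propositions \ref{Fversion} and \ref{CompFiltIni}, and the corollary is only the assembly of those two facts. The only point worth being careful about is to make clear that Hypothesis \ref{HypClass} is what legitimates the use of Proposition \ref{CompFiltIni}, and that without it one would only know $\mathcal{F}^{o,s,\eta}_t \subset \mathcal{F}^{s,\eta}_t$, which would not suffice to produce an $\mathcal{F}^o_t$-measurable version.
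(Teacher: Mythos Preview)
Your proposal is correct and follows exactly the approach indicated in the paper, which simply states that the corollary is obtained by combining Propositions \ref{Fversion} and \ref{CompFiltIni}. Your write-up merely spells out this combination in detail, including the role of Hypothesis \ref{HypClass} in invoking Proposition \ref{CompFiltIni}.
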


\begin{definition}\label{trivial}
If $(\tilde{\Omega},\tilde{\mathcal{F}},\tilde{\mathbbm{P}})$ 
is a probability space and
 $\mathcal{G}$ is a sub-$\sigma$-field of $\tilde{\mathcal{F}}$, we say that $\mathcal{G}$ is \textbf{$\mathbbm{P}$-trivial} if for any element $G$ of $\mathcal{G}$,
 then $\mathbbm{P}(G)\in\{0,1\}$.
\end{definition} 
\begin{corollary}\label{CoroTrivial}
Assume that Hypothesis \ref{HypClass} holds.
For every $(s,\eta)\in\mathbbm{R}_+\times\Omega$, $\mathcal{F}^o_s$ and $\mathcal{F}_s$ are $\mathbbm{P}^{s,\eta}$-trivial.
\end{corollary}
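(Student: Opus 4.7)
The plan is to treat the two $\sigma$-fields separately, starting with $\mathcal{F}^o_s$ (which is the easier case) and then bootstrapping to $\mathcal{F}_s$ via Hypothesis \ref{HypClass}.

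For $\mathcal{F}^o_s$, fix $F \in \mathcal{F}^o_s$. Then $\mathds{1}_F$ is $\mathcal{F}^o_s$-measurable, so $\mathds{1}_F(\omega)$ depends only on the stopped path $\omega^s$. By item 1 of Definition \ref{DefCondSyst}, $\omega^s = \eta^s$ holds $\mathbbm{P}^{s,\eta}$-a.s., hence $\mathds{1}_F(\omega) = \mathds{1}_F(\eta)$ $\mathbbm{P}^{s,\eta}$-a.s. Integrating yields $\mathbbm{P}^{s,\eta}(F) = \mathds{1}_F(\eta) \in \{0,1\}$. (Equivalently, one can invoke the first item of Definition \ref{DefCondOp}: $P_s[\mathds{1}_F] = \mathds{1}_F$ and then $\mathbbm{P}^{s,\eta}(F) = P_s[\mathds{1}_F](\eta) = \mathds{1}_F(\eta)$.)

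For $\mathcal{F}_s$, fix $F \in \mathcal{F}_s$. Apply Hypothesis \ref{HypClass} with the choice $t = s$: the identity \eqref{DE14} gives, for $\mathbbm{P}^{s,\eta}$-a.a. $\omega$,
\begin{equation*}
\mathds{1}_F(\omega) = \mathbbm{E}^{s,\eta}[\mathds{1}_F \mid \mathcal{F}_s](\omega) = \mathbbm{P}^{s,\omega}(F),
\end{equation*}
where the first equality uses $F \in \mathcal{F}_s$. Now observe that the map $\omega \longmapsto \mathbbm{P}^{s,\omega}(F)$ is $\mathcal{F}^o_s$-measurable by item 2 of Definition \ref{DefCondSyst}, so it depends only on $\omega^s$. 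Combined once more with $\mathbbm{P}^{s,\eta}(\omega^s = \eta^s) = 1$, this yields $\mathbbm{P}^{s,\omega}(F) = \mathbbm{P}^{s,\eta}(F)$ $\mathbbm{P}^{s,\eta}$-a.s. Putting the two a.s.-identities together, we obtain
\begin{equation*}
\mathds{1}_F(\omega) = \mathbbm{P}^{s,\eta}(F) \quad \mathbbm{P}^{s,\eta}\text{-a.s.}
\end{equation*}
Since the left-hand side is $\{0,1\}$-valued while the right-hand side is a constant in $[0,1]$, that constant must lie in $\{0,1\}$, proving $\mathbbm{P}^{s,\eta}(F) \in \{0,1\}$.

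The only mildly delicate point is justifying the substitution $t = s$ in Hypothesis \ref{HypClass}, but $s \geq s$ is explicitly allowed. The rest is just chaining three facts: the conditional-expectation property \eqref{DE14}, the $\mathcal{F}^o_s$-measurability in $\omega$ of $\mathbbm{P}^{s,\omega}(F)$, and the constancy of the initial stopped path under $\mathbbm{P}^{s,\eta}$. No obstacle of substance arises.
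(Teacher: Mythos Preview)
Your proof is correct. The treatment of $\mathcal{F}^o_s$ is essentially identical to the paper's.

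For $\mathcal{F}_s$, however, you take a genuinely different (and more direct) route. The paper argues indirectly: it first invokes Proposition~\ref{CompFiltIni} (which relies on Hypothesis~\ref{HypClass}) to obtain $\mathcal{F}^{s,\eta}_s=\mathcal{F}^{o,s,\eta}_s$, observes that adding $\mathbbm{P}^{s,\eta}$-negligible sets to the trivial $\sigma$-field $\mathcal{F}^o_s$ keeps it trivial, and concludes by the inclusion $\mathcal{F}_s\subset\mathcal{F}^{s,\eta}_s$. You instead apply Hypothesis~\ref{HypClass} directly at $t=s$, combine it with the $\mathcal{F}^o_s$-measurability of $\omega\mapsto\mathbbm{P}^{s,\omega}(F)$, and deduce that $\mathds{1}_F$ is $\mathbbm{P}^{s,\eta}$-a.s.\ equal to the constant $\mathbbm{P}^{s,\eta}(F)$. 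Your argument is in effect the relevant piece of the proof of Proposition~\ref{CompFiltIni} specialized to $t=s$, so it is self-contained and avoids the detour through that proposition; the paper's route, on the other hand, gives the slightly stronger intermediate statement that the larger $\sigma$-field $\mathcal{F}^{s,\eta}_s$ is already $\mathbbm{P}^{s,\eta}$-trivial.
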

\begin{proof}
We fix $(s,\eta)\in\mathbbm{R}_+\times\Omega$.
 We start by showing that $\mathcal{F}^o_s$ is $\mathbbm{P}^{s,\eta}$-trivial. For every
$B\in \mathcal{F}^o_s$ and $\omega$ we have $\mathds{1}_B(\omega)=\mathds{1}_B(\omega^s)$, and since $\mathbbm{P}^{s,\eta}(\omega^s=\eta^s)=1$,
 we have $\mathds{1}_B(\omega^s)=\mathds{1}_B(\eta^s)$ $\mathbbm{P}^{s,\eta}$ a.s.  So $\mathbbm{P}^{s,\eta}(B)=\mathbbm{E}^{s,\eta}[\mathds{1}_B(\omega)]=\mathds{1}_B(\eta^s)\in\{0,1\}$.
\\
Then, it is clear that adding $\mathbbm{P}^{s,\eta}$-negligible sets does
not change the fact of being $\mathbbm{P}^{s,\eta}$-trivial, so $\mathcal{F}_s^{o,s,\eta}$ (which by Proposition \ref{CompFiltIni} is equal to $\mathcal{F}_s^{s,\eta}$) is $\mathbbm{P}^{s,\eta}$-trivial and therefore so is $\mathcal{F}_s\subset \mathcal{F}_s^{s,\eta}$. 

\end{proof}

\section{Path-dependent Additive Functionals}\label{SectionMAF}

In this section, we introduce the notion of Path-dependent Additive Functionals
 that we use in the paper. As already anticipated, 
this can be interpreted as a path-dependent extension of the notion of non-homogeneous Additive Functionals of a canonical Markov class developed in  \cite{paperAF}.
For that reason,  several proofs of this section are 
 very similar to those of \cite{paperAF} and 
are inspired from  \cite{dellmeyerD} Chapter XV,
which treats the time-homogeneous case.

We keep on using Notation \ref{canonicalspace} and we fix a  
path-dependent canonical class $(\mathbbm{P}^{s,\eta})_{(s,\eta)\in\mathbbm{R}_+\times \Omega}$ and assume the following for the whole section.
\begin{hypothesis}\label{HypAF}
$(\mathbbm{P}^{s,\eta})_{(s,\eta)\in\mathbbm{R}_+\times \Omega}$ is progressive and verifies Hypothesis \ref{HypClass}. 
\end{hypothesis}
We will use the notation $\Delta:=\{(t,u)\in\mathbbm{R}_+^2|t\leq u\}$.

\begin{definition}\label{DefAF}
 On $(\Omega,\mathcal{F})$, a \textbf{path-dependent Additive Functional} (in short path-dependent AF)
will be  a random-field  
 $A:=(A_{t,u})_{(t,u)\in\Delta}$ with values in $\mathbbm{R}$ 
verifying the two following conditions.
\begin{enumerate}
\item For any $(t,u)\in\Delta$, $A_{t,u}$ is $\mathcal{F}^o_{u}$-measurable;
\item for any $(s,\eta)\in\mathbbm{R}_+\times \Omega$, there exists a real cadlag $\mathbbm{F}^{s,\eta}$-adapted process $A^{s,\eta}$ (taken equal to zero on $[0,s]$ by convention) such that for any $\eta\in \Omega$ and $s\leq t\leq u$, 
\begin{equation*}
A_{t,u} = A^{s,\eta}_u-A^{s,\eta}_t \,\text{  }\, \mathbbm{P}^{s,\eta}\text{ a.s.}
\end{equation*}
\end{enumerate}
We denote by $A^t$ the ($\mathbbm{F}^o$-adapted) process
 $u\mapsto A_{t,u}$ indexed by $[t,+\infty[$. For any $(s,\eta)\in [0,t]\times \Omega$, $A^{s,\eta}_{\cdot}-A^{s,\eta}_t$ 
 is a $\mathbbm{P}^{s,\eta}$-version  of $A^t$ on $[t,+\infty[$.
$A^{s,\eta}$ will be called the \textbf{cadlag version of $A$ under} $\mathbbm{P}^{s,\eta}$.

A path-dependent Additive Functional will be called a \textbf{path-dependent Martingale Additive Functional} (in short path-dependent MAF) if under any $\mathbbm{P}^{s,\eta}$ its cadlag version is a martingale.

More generally, a path-dependent AF will be said to verify a certain property 
(being non-decreasing, of bounded variation, square integrable,
having  ${\mathcal L}^1$-terminal value) if under any $\mathbbm{P}^{s,\eta}$ its cadlag version verifies it.

Finally, given two increasing path-dependent AFs $A$ and $B$, $A$ will be said to be
 \textbf{absolutely continuous with respect to} $B$ if for any 
$(s,\eta)\in \mathbbm{R}_+\times \Omega$, $dA^{s,\eta} \ll dB^{s,\eta}$ in the sense of stochastic measures. This means that  $dA^{s,\eta}(\omega)$ is absolutely continuous with respect to $dB^{s,\eta}(\omega)$ for $\mathbbm{P}^{s,\eta}$ almost all $\omega$. 
\end{definition}

\begin{remark}
The set of path-dependent AFs (resp.  path-dependent AFs with bounded variation,  path-dependent AFs with ${\mathcal L}^1$-terminal value,  path-dependent MAFs, square integrable path-dependent MAFs) is a linear space.
\end{remark}

\begin{lemma}\label{LemMAF}
Let $M$ be  an $\mathbbm{F}^o$-adapted process such that for all 
$(s,\eta)$, on $[s,+\infty[$, 
$M$ is a
 $(\mathbbm{P}^{s,\eta},\mathbbm{F}^o)$-martingale. \\
Then, for all $(s,\eta)$,
 $M_{\cdot \vee s} -M_s$ admits 
 a $\mathbbm{P}^{s,\eta}$-version which is a  $(\mathbbm{P}^{s,\eta},\mathbbm{F}^{s,\eta})$
 cadlag martingale
$M^{s,\eta}$ vanishing in $[0,s]$.
In particular $M_{t,u}(\omega):=M_u(\omega)-M_t(\omega)$ defines a 
path-dependent MAF 
 with 
cadlag version $M^{s,\eta}$ 
 under $\mathbbm{P}^{s,\eta}$.
\end{lemma}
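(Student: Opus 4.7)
The plan is to fix $(s,\eta) \in \mathbbm{R}_+ \times \Omega$ and build $M^{s,\eta}$ via a standard martingale regularization argument, and then to read off the path-dependent MAF property by telescoping.

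First I would pass from the initial filtration to the completed right-continuous one. Since $(\Omega,\mathcal{F}^{s,\eta},\mathbbm{F}^{s,\eta},\mathbbm{P}^{s,\eta})$ fulfills the usual conditions and $\mathcal{F}^o_t \subset \mathcal{F}^{s,\eta}_t$ for every $t$, Proposition \ref{ConditionalExp} implies that the $(\mathbbm{P}^{s,\eta},\mathbbm{F}^o)$-martingale property of $M$ on $[s,+\infty[$ upgrades to the $(\mathbbm{P}^{s,\eta},\mathbbm{F}^{s,\eta})$-martingale property on the same interval. The classical theorem on the existence of a cadlag modification of a martingale on a stochastic basis satisfying the usual conditions (see \cite{jacod} Chap.~I or \cite{dellmeyer75}) then produces a cadlag $(\mathbbm{P}^{s,\eta},\mathbbm{F}^{s,\eta})$-martingale $\widetilde{M}^{s,\eta}=(\widetilde{M}^{s,\eta}_t)_{t\geq s}$ such that $\widetilde{M}^{s,\eta}_t = M_t$ $\mathbbm{P}^{s,\eta}$-a.s. for every $t\geq s$.

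Next I would define $M^{s,\eta}_t := 0$ on $[0,s]$ and $M^{s,\eta}_t := \widetilde{M}^{s,\eta}_t - \widetilde{M}^{s,\eta}_s$ on $[s,+\infty[$. By construction $M^{s,\eta}$ is cadlag, $\mathbbm{F}^{s,\eta}$-adapted, vanishes on $[0,s]$, and remains a $(\mathbbm{P}^{s,\eta},\mathbbm{F}^{s,\eta})$-martingale; and for every $t\geq s$ one has $M^{s,\eta}_t = M_t - M_s = (M_{\cdot \vee s}-M_s)_t$ $\mathbbm{P}^{s,\eta}$-a.s., with equality trivial on $[0,s]$. For the second assertion, set $M_{t,u}(\omega) := M_u(\omega)-M_t(\omega)$; this is $\mathcal{F}^o_u$-measurable by $\mathbbm{F}^o$-adaptedness of $M$, and for $s\leq t\leq u$ a direct telescoping yields $M_{t,u} = (\widetilde{M}^{s,\eta}_u-\widetilde{M}^{s,\eta}_s)-(\widetilde{M}^{s,\eta}_t-\widetilde{M}^{s,\eta}_s) = M^{s,\eta}_u - M^{s,\eta}_t$ $\mathbbm{P}^{s,\eta}$-a.s., so $M$ satisfies Definition \ref{DefAF} with cadlag version $M^{s,\eta}$; since each $M^{s,\eta}$ is a martingale, $M$ is a path-dependent MAF.

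The only mildly delicate point is verifying that martingale regularization is legitimately applicable, i.e.\ that it operates on a filtration fulfilling the usual conditions. This is handled by the initial transition from $\mathbbm{F}^o$ to $\mathbbm{F}^{s,\eta}$ via Proposition \ref{ConditionalExp}; once that is done the rest is bookkeeping and no substantive obstacle remains.
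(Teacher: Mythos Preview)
Your argument is correct and follows the same route as the paper's own proof: upgrade the martingale property from $\mathbbm{F}^o$ to $\mathbbm{F}^{s,\eta}$, apply the cadlag regularization available under the usual conditions, shift by $M_s$ and extend by zero on $[0,s]$, then telescope. One small point: the bare inclusion $\mathcal{F}^o_t\subset\mathcal{F}^{s,\eta}_t$ is not by itself what makes Proposition~\ref{ConditionalExp} applicable---by Notation~\ref{CompletedBasis}, $\mathcal{F}^{s,\eta}_t$ is defined as the $\mathbbm{P}^{s,\eta}$-closure of $\mathcal{F}_t$, not of $\mathcal{F}^o_t$, so you also need Proposition~\ref{CompFiltIni} (which holds here since Hypothesis~\ref{HypClass} is in force throughout Section~\ref{SectionMAF}) to identify $\mathcal{F}^{s,\eta}_t$ with $(\mathcal{F}^o_t)^{\mathbbm{P}^{s,\eta}}$; the paper's proof cites both propositions for exactly this reason.
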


\begin{proof}
By Propositions \ref{ConditionalExp} and \ref{CompFiltIni},  $M$ is also on $[s,+\infty[$ a $(\mathbbm{P}^{s,\eta},\mathbbm{F}^{s,\eta})$-martingale hence $M_{\cdot \vee s} -M_s$ is on $\mathbbm{R}_+$ a $(\mathbbm{P}^{s,\eta},\mathbbm{F}^{s,\eta})$-martingale and vanishes on $[0,s]$. Since $\mathbbm{F}^{s,\eta}$ satisfies the usual conditions, then $M_{\cdot \vee s} -M_s$ admits  a cadlag $\mathbbm{P}^{s,\eta}$-modification $M^{s,\eta}$ which also is a $(\mathbbm{P}^{s,\eta},\mathbbm{F}^{s,\eta})$-martingale vanishing in $[0,s]$.  It clearly verifies that $M_{t,u}=M_u-M_t=M^{s,\eta}_u-M^{s,\eta}_t$ $\mathbbm{P}^{s,\eta}$-a.s. for all $s\leq t\leq u$. 
\end{proof}

\begin{example}
Let $Z$ be an  $\mathcal{F}$-measurable bounded r.v. 
 A typical example of process 
verifying the conditions of previous Lemma \ref{LemMAF} is
 given by $M^Z:(t,\omega)\longmapsto \mathbbm{E}^{t,\omega}[Z]$,
see Remark \ref{Borel}.
\end{example}

The following results state that, for a given  square integrable path-dependent MAF $(M_{t,u})_{(t,u)\in\Delta}$  we can exhibit
 two non-decreasing path-dependent AFs with ${\mathcal L}^1$-terminal value, denoted respectively by  $([M]_{t,u})_{(t,u)\in\Delta}$ 
and    $(\langle M \rangle_{t,u})_{(t,u)\in\Delta}$,
which will play respectively 
the role of a quadratic variation and an angular bracket of it.
Moreover we will show 
 that the Radon-Nikodym derivative of the mentioned angular bracket of 
a square integrable path-dependent MAF with respect to a reference function $V$ is 
a progressively measurable process which does not depend on the probability.
\\
The proof of the proposition below is postponed to the appendix.
\begin{proposition}\label{VarQuadAF}
Let $(M_{t,u})_{(t,u)\in\Delta}$ be a square integrable path-dependent MAF, and for any $(s,\eta)\in\mathbbm{R}_+\times \Omega$,
  $[M^{s,\eta}]$ denote the quadratic variation of its cadlag version $M^{s,\eta}$ under $\mathbbm{P}^{s,\eta}$.
 Then there exists a non-decreasing path-dependent AF with ${\mathcal L}^1$-terminal value  which we will call $([M]_{t,u})_{(t,u)\in\Delta}$ and which, for any $(s,\eta)\in\mathbbm{R}_+\times \Omega$, has  $[M^{s,\eta}]$
as cadlag version
 under $\mathbbm{P}^{s,\eta}$. 
\end{proposition}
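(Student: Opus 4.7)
The goal is to exhibit, for each $(t,u)\in\Delta$, a single $\mathcal{F}^o_u$-measurable random variable $[M]_{t,u}$ whose value under every $\mathbbm{P}^{s,\eta}$ with $s\le t$ coincides $\mathbbm{P}^{s,\eta}$-almost surely with $[M^{s,\eta}]_u-[M^{s,\eta}]_t$. Since the classical quadratic variation is only defined up to indistinguishability in a given measure, the crux of the argument is to find an intrinsic, pathwise measurable expression that matches all of the probabilistic quadratic variations simultaneously. The candidate arises from the classical approximation of $[N]$ by Riemann sums of squared increments, applied directly to the random field $M$.

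Concretely, I would fix a sequence of deterministic partitions $\pi_n$ of $\mathbbm{R}_+$ with vanishing mesh (e.g.\ the dyadic rationals). For $(t,u)\in\Delta$, let $t=s_0^n<s_1^n<\cdots<s_{k_n}^n=u$ denote the subdivision of $[t,u]$ induced by $\pi_n\cup\{t,u\}$, and set
\[
V^n_{t,u}\;:=\;\sum_{i=0}^{k_n-1}\bigl(M_{s_i^n,\,s_{i+1}^n}\bigr)^{2}.
\]
By item 1 of Definition \ref{DefAF} each $V^n_{t,u}$ is $\mathcal{F}^o_u$-measurable. Define $[M]_{t,u}:=\limsup_{n}V^n_{t,u}$ on the event $\{\limsup_n V^n_{t,u}<+\infty\}$ and $0$ elsewhere; this candidate is $\mathcal{F}^o_u$-measurable. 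Under $\mathbbm{P}^{s,\eta}$ with $s\le t$, the AF relation $M_{a,b}=M^{s,\eta}_{b}-M^{s,\eta}_{a}$ a.s.\ (for $s\le a\le b$) yields $V^n_{t,u}=\sum_{i}(M^{s,\eta}_{s_{i+1}^n}-M^{s,\eta}_{s_i^n})^{2}$ a.s., and the classical quadratic variation theorem for square integrable cadlag martingales (see e.g.\ \cite{jacod}) provides convergence of these sums to $[M^{s,\eta}]_u-[M^{s,\eta}]_t$ in $\mathcal{L}^{1}(\mathbbm{P}^{s,\eta})$.

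The main technical obstacle is upgrading this in-probability (and $\mathcal{L}^{1}$) convergence to a $\mathbbm{P}^{s,\eta}$-almost sure identification between $\limsup_{n}V^n_{t,u}$ and the limit, since convergence in probability does not in general force the $\limsup$ of the full sequence to agree with the limit. I would handle this by extracting, along the dyadic refinements, a subsequence for which the $\mathcal{L}^{2}$-error is summable, and applying Borel--Cantelli to get a.s.\ convergence of the subsequence under each $\mathbbm{P}^{s,\eta}$, then redefining $[M]_{t,u}$ as the $\limsup$ along this fixed deterministic subsequence; since the quantitative error of the partition sums is controlled by $\mathbbm{E}^{s,\eta}[([M^{s,\eta}]_u-[M^{s,\eta}]_t)^{2}]$ and the mesh, a diagonal extraction suffices to make the same subsequence work. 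Once the identification $[M]_{t,u}=[M^{s,\eta}]_u-[M^{s,\eta}]_t$ $\mathbbm{P}^{s,\eta}$-a.s.\ is established for every $(s,\eta)$ with $s\le t$, the remaining AF properties are routine: $[M^{s,\eta}]$ is cadlag, non-decreasing and $\mathbbm{F}^{s,\eta}$-adapted by the standard theory of square integrable martingales, and $\mathbbm{E}^{s,\eta}\bigl[[M^{s,\eta}]_\infty\bigr]=\mathbbm{E}^{s,\eta}[(M^{s,\eta}_\infty)^{2}]<+\infty$ by Doob's maximal inequality, securing the $\mathcal{L}^{1}$-terminal value.
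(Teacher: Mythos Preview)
Your approach contains a genuine gap at the subsequence extraction step. You correctly identify that convergence in probability of $V^n_{t,u}$ to $[M^{s,\eta}]_u-[M^{s,\eta}]_t$ does not pin down the pointwise $\limsup$, and you propose to repair this by extracting a deterministic subsequence along which the convergence is almost sure under \emph{every} $\mathbbm{P}^{s,\eta}$. But this is where the argument breaks: a diagonal extraction handles only countably many measures, whereas $(s,\eta)$ ranges over the uncountable set $[0,t]\times\Omega$. Your claim that ``the quantitative error of the partition sums is controlled by $\mathbbm{E}^{s,\eta}[([M^{s,\eta}]_u-[M^{s,\eta}]_t)^{2}]$ and the mesh'' does not rescue the situation either: first, $[M^{s,\eta}]$ is only known to have $\mathcal{L}^{1}$-terminal value (from square integrability of $M^{s,\eta}$), so the second moment you invoke may well be infinite; second, even when finite, no mesh-dependent rate of this form holds for general cadlag square-integrable martingales; and third, any such bound would still depend on $(s,\eta)$, so you cannot produce a single subsequence with summable errors simultaneously for all starting points.

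The paper circumvents this by abandoning the search for a common subsequence altogether. For fixed $(t,u)$ it first works only under the measures $\mathbbm{P}^{t,\eta}$: by Corollary \ref{F0version} it selects, for each $\eta$, an $\mathcal{F}^o_u$-measurable representative $a_{t,u}(\eta,\cdot)$ of $[M^{t,\eta}]_u-[M^{t,\eta}]_t$. The partition sums $V^k_{t,u}$ are used only to show that the finite measures $\mathbbm{Q}^{\eta}(F):=\mathbbm{E}^{t,\eta}[\mathds{1}_F\,a_{t,u}(\eta,\cdot)]$ depend $\mathcal{F}^o_t$-measurably on $\eta$ (truncate by $n$, pass to the limit in $k$ using $L^1$-convergence under the truncation, then let $n\to\infty$ by monotone convergence). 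A parametrized Radon--Nikodym theorem (Theorem 58, Chapter V of \cite{dellmeyerB}) then upgrades $a_{t,u}$ to a jointly $\mathcal{F}^o_t\otimes\mathcal{F}^o_u$-measurable map, and one defines $[M]_{t,u}(\omega):=a_{t,u}(\omega,\omega)$. The relation $\mathbbm{P}^{t,\eta}(\omega^t=\eta^t)=1$ immediately gives the identification for $s=t$, and the extension to $s<t$ relies on the conditioning property of the path-dependent canonical class (item 3 of Definition \ref{DefCondSyst}), not on any subsequence. The essential point is that measurability in $\eta$ is extracted at the level of the measures $\mathbbm{Q}^{\eta}$ and their densities, which requires only the convergence in probability you already have, rather than through a pointwise $\limsup$ that would demand almost-sure convergence uniformly over uncountably many laws.
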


The next result can be seen as an extension of Theorem 15 Chapter XV in \cite{dellmeyerD} to a path-dependent
context and will be needed to show that the result above also holds for the angular bracket. Its proof is also postponed to the appendix.
\begin{proposition}\label{AngleBracketAF}
Let $(B_{t,u})_{(t,u)\in\Delta}$ be a non-decreasing  path-dependent AF with $\mathcal{L}^1$-
terminal value. For any $(s,\eta)\in\mathbbm{R}_+\times \Omega$, let 
$B^{s,\eta}$ be its cadlag version under $\mathbbm{P}^{s,\eta}$ and let $A^{s,\eta}$ be the predictable dual projection of $B^{s,\eta}$ in 
$(\Omega,\mathcal{F}^{s,\eta},\mathbbm{F}^{s,\eta},\mathbbm{P}^{s,\eta})$. Then there exists a non-decreasing  path-dependent AF with ${\mathcal L}^1$-terminal value $
(A_{t,u})_{(t,u)\in\Delta}$ such that under any  $\mathbbm{P}^{s,\eta}$, the cadlag version of $A$ is $A^{s,\eta}$.
\end{proposition}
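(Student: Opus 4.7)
The strategy parallels the proof of Proposition~\ref{VarQuadAF} and the Markov template of Chapter~XV of \cite{dellmeyerD}, with Hypothesis~\ref{HypClass} substituting for the Markov property. The aim is to realize each increment $A^{s,\eta}_u - A^{s,\eta}_t$ as the limit of intrinsic Riemann sums built only from the $\mathcal{F}^o$-measurable increments of $B$ and from the operators $\eta\mapsto\mathbbm{E}^{t,\eta}[\,\cdot\,]$, so that the underlying random field has no dependence on $(s,\eta)$.

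Fix a refining sequence of dyadic subdivisions $(t^n_k)_{k\geq 0}$ of $\mathbbm{R}_+$ with mesh tending to zero, and for $(t,u)\in\Delta$ set
\begin{equation*}
S^n_{t,u}(\omega) \;:=\; \sum_{k\geq 0} \mathbbm{E}^{t^n_k,\omega}\!\bigl[B_{t^n_k,\,t^n_{k+1}}\bigr]\,\mathds{1}_{\{t\,\leq\, t^n_k,\; t^n_{k+1}\,\leq\, u\}}.
\end{equation*}
Each summand is $\mathcal{F}^o_{t^n_k}$-measurable by Remark~\ref{Borel} together with the progressivity in Hypothesis~\ref{HypAF}, so $S^n_{t,u}$ is $\mathcal{F}^o_u$-measurable and is defined without reference to any $\mathbbm{P}^{s,\eta}$. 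By Hypothesis~\ref{HypClass}, for every $(s,\eta)$ with $s\leq t$ we have $\mathbbm{P}^{s,\eta}$-a.s.
\begin{equation*}
\mathbbm{E}^{t^n_k,\omega}\!\bigl[B_{t^n_k,t^n_{k+1}}\bigr]\;=\;\mathbbm{E}^{s,\eta}\!\bigl[B^{s,\eta}_{t^n_{k+1}}-B^{s,\eta}_{t^n_k}\,\big|\,\mathcal{F}_{t^n_k}\bigr](\omega),
\end{equation*}
so that $S^n_{t,u}$ coincides, $\mathbbm{P}^{s,\eta}$-a.s., with the classical Dol\'eans--Meyer Riemann approximation of the compensator of $B^{s,\eta}$ on $[t,u]$, and hence converges to $A^{s,\eta}_u - A^{s,\eta}_t$ in the weak $\sigma(\mathcal{L}^1,\mathcal{L}^\infty)$-topology of $\mathcal{L}^1(\mathbbm{P}^{s,\eta})$. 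Passing via Mazur's lemma to Ces\`aro averages $\bar S^N_{t,u}:=\tfrac{1}{N}\sum_{n=1}^N S^n_{t,u}$---still $\mathcal{F}^o_u$-measurable and independent of $(s,\eta)$---one obtains strong $\mathcal{L}^1(\mathbbm{P}^{s,\eta})$-convergence, and in particular convergence in $\mathbbm{P}^{s,\eta}$-probability. Setting $A_{t,u}(\omega):=\limsup_N \bar S^N_{t,u}(\omega)$ yields a universally $\mathcal{F}^o_u$-measurable candidate which, under each $\mathbbm{P}^{s,\eta}$, equals $A^{s,\eta}_u - A^{s,\eta}_t$ $\mathbbm{P}^{s,\eta}$-a.s.\ by extracting an a.s.-convergent subsequence and exploiting non-negativity of the summands (inherited from the non-decreasingness of $B^{s,\eta}$). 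Non-decreasingness and the $\mathcal{L}^1$-terminal value property of the random field $A$ are then inherited directly from those of the $A^{s,\eta}$'s.

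The main difficulty is the universality step: turning convergence that, a~priori, holds only under each separate $\mathbbm{P}^{s,\eta}$ into a single $\mathcal{F}^o_u$-measurable representative valid under \emph{all} $\mathbbm{P}^{s,\eta}$ simultaneously. The entire detour through intrinsic Riemann sums and their Ces\`aro averages is designed precisely for this purpose: the approximants $\bar S^N_{t,u}$ live in $\mathcal{F}^o_u$ independently of any measure, so the universal candidate $A_{t,u}$ is available \emph{before} any probability is chosen, and only the $\mathbbm{P}^{s,\eta}$-a.s.\ identification of its value with the classical Doob--Meyer compensator of $B^{s,\eta}$ remains to be done.
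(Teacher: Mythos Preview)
Your approach has the right instinct---build an $\mathcal{F}^o_u$-measurable approximant independent of any measure, then identify its limit under each $\mathbbm{P}^{s,\eta}$---but the universality step, which you correctly flag as the crux, does not go through as written. Mazur's lemma, applied in $\mathcal{L}^1(\mathbbm{P}^{s,\eta})$, produces convex combinations whose coefficients depend on that particular $\mathbbm{P}^{s,\eta}$; it does \emph{not} assert that the plain Ces\`aro means $\tfrac{1}{N}\sum_{n=1}^N S^n_{t,u}$ converge strongly ($\mathcal{L}^1$ lacks the Banach--Saks property). So your $\bar S^N_{t,u}$ is indeed measure-free, but there is no argument that it converges strongly in every $\mathcal{L}^1(\mathbbm{P}^{s,\eta})$. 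And even granting strong $\mathcal{L}^1$-convergence under each $\mathbbm{P}^{s,\eta}$, setting $A_{t,u}:=\limsup_N \bar S^N_{t,u}$ is not justified: the a.s.-convergent subsequence you would extract depends on $(s,\eta)$, and convergence in probability of a non-negative sequence does not force its $\limsup$ to equal the limit a.s. You therefore have no mechanism to pin the single r.v.\ $\limsup_N\bar S^N_{t,u}$ to the correct value $A^{s,\eta}_u-A^{s,\eta}_t$ simultaneously for all $(s,\eta)$.

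The paper circumvents this limit-identification problem entirely. Instead of approximating $A^{s,\eta}_u-A^{s,\eta}_t$ by sums, it characterizes it through the finite measure $\mathbbm{Q}^\eta(F):=\mathbbm{E}^{t,\eta}[\mathds{1}_F(A^{t,\eta}_u-A^{t,\eta}_t)]$ on $\mathcal{F}^o_u$, and uses the dual-projection identity $\mathbbm{Q}^\eta(F)=\mathbbm{E}^{t,\eta}[\int_t^u N^F_{r^-}dB_r]$ with the universal integrand $N^F_r(\omega)=\mathbbm{P}^{r,\omega}(F)$ to show that $\eta\mapsto\mathbbm{Q}^\eta(F)$ is $\mathcal{F}^o_t$-measurable for each $F$. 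Theorem~58, Chapter~V of \cite{dellmeyerB} then furnishes a jointly $\mathcal{F}^o_t\otimes\mathcal{F}^o_u$-measurable Radon--Nikodym density $a_{t,u}(\eta,\omega)$ of $\mathbbm{Q}^\eta$ with respect to $\mathbbm{P}^{t,\eta}$, and the diagonal $A_{t,u}(\omega):=a_{t,u}(\omega,\omega)$ is the desired universal representative. The extension from $s=t$ to $s<t$ is then a short computation with the conditioning property. The point is that the Radon--Nikodym route replaces a delicate pointwise limit by a measurable-selection theorem, and that is what makes the universality rigorous.
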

\begin{remark}\
	\begin{enumerate}
		\item About the notion of dual predictable projection (also called compensator) related to some	stochastic basis we refer to Theorem 3.17 in Chapter I of \cite{jacod}.
		\item We recall that, whenever $M,N$ are two local martingales,
		the angle bracket $\langle M, N \rangle$ is the dual predictable
		projection of $[M,N]$, see Proposition 4.50 b) in Chapter I of \cite{jacod}.
	\end{enumerate}
\end{remark}

\begin{corollary}\label{AFbracket}
Let $(M_{t,u})_{(t,u)\in\Delta}$, $(N_{t,u})_{(t,u)\in\Delta}$ be two square integrable path-dependent MAFs, let $M^{s,\eta}$ (respectively $N^{s,\eta}$) be the cadlag version of $M$ (respectively $N$) under $\mathbbm{P}^{s,\eta}$.
Then there exists a bounded variation path-dependent AF with ${\mathcal L}^1$-terminal
value, denoted $(\langle M,N\rangle_{t,u})_{(t,u)\in\Delta}$, such that under any $\mathbbm{P}^{s,\eta}$, the cadlag version of $\langle M,N\rangle$ is $\langle M^{s,\eta},N^{s,\eta}\rangle$. If $M=N$ the path-dependent AF $\langle M,N\rangle$ will be denoted $\langle M\rangle$ and is non-decreasing.
\end{corollary}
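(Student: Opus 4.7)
The plan is to obtain $\langle M,N\rangle$ by polarization, combining Proposition \ref{VarQuadAF} (existence of the quadratic variation AF) with Proposition \ref{AngleBracketAF} (the AF version of the predictable dual projection).

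First I would note that $M+N$ is itself a square integrable path-dependent MAF, by the linearity remark preceding Lemma \ref{LemMAF}, and its cadlag version under $\mathbbm{P}^{s,\eta}$ is $M^{s,\eta}+N^{s,\eta}$. Proposition \ref{VarQuadAF} applied separately to $M$, $N$ and $M+N$ then yields non-decreasing path-dependent AFs $[M]$, $[N]$, $[M+N]$ with ${\mathcal L}^1$-terminal value, having cadlag versions $[M^{s,\eta}]$, $[N^{s,\eta}]$, $[M^{s,\eta}+N^{s,\eta}]$ under $\mathbbm{P}^{s,\eta}$. The classical polarization identity for semimartingales gives, for every $(s,\eta)$,
\begin{equation*}
[M^{s,\eta},N^{s,\eta}] \;=\; \tfrac{1}{2}\bigl([M^{s,\eta}+N^{s,\eta}] - [M^{s,\eta}] - [N^{s,\eta}]\bigr) \quad \mathbbm{P}^{s,\eta}\text{-a.s.},
\end{equation*}
so the random field $[M,N]_{t,u}:=\tfrac{1}{2}([M+N]_{t,u} - [M]_{t,u} - [N]_{t,u})$ is a bounded variation path-dependent AF with ${\mathcal L}^1$-terminal value and cadlag version $[M^{s,\eta},N^{s,\eta}]$ under each $\mathbbm{P}^{s,\eta}$.

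Next I would apply Proposition \ref{AngleBracketAF} to the two non-decreasing AFs $\tfrac{1}{2}[M+N]$ and $\tfrac{1}{2}([M]+[N])$, obtaining two non-decreasing path-dependent AFs $B^{1}, B^{2}$ with ${\mathcal L}^1$-terminal value whose cadlag versions under $\mathbbm{P}^{s,\eta}$ are the predictable dual projections of $\tfrac{1}{2}[M^{s,\eta}+N^{s,\eta}]$ and $\tfrac{1}{2}([M^{s,\eta}]+[N^{s,\eta}])$ respectively. Setting $\langle M,N\rangle_{t,u}:=B^{1}_{t,u}-B^{2}_{t,u}$ produces a bounded variation path-dependent AF with ${\mathcal L}^1$-terminal value. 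By linearity of the predictable dual projection its cadlag version under $\mathbbm{P}^{s,\eta}$ is the dual predictable projection of $[M^{s,\eta},N^{s,\eta}]$, which, as recalled in the remark following Proposition \ref{AngleBracketAF}, equals $\langle M^{s,\eta},N^{s,\eta}\rangle$.

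For the last assertion, when $M=N$ the construction simplifies: $\langle M\rangle$ is directly the AF produced by Proposition \ref{AngleBracketAF} applied to the non-decreasing AF $[M]$, so its cadlag version under $\mathbbm{P}^{s,\eta}$ is the dual predictable projection of the non-decreasing process $[M^{s,\eta}]$, which is itself non-decreasing; therefore $\langle M\rangle$ is non-decreasing in the sense of Definition \ref{DefAF}. I do not anticipate a genuine technical obstacle here: the only care is to check that "cadlag version of a linear combination of AFs" really is the corresponding linear combination of cadlag versions under each $\mathbbm{P}^{s,\eta}$, which follows immediately from item 2 of Definition \ref{DefAF}, since the defining identity $A_{t,u}=A^{s,\eta}_u-A^{s,\eta}_t$ is stable under $\mathbbm{P}^{s,\eta}$-a.s. linear operations.
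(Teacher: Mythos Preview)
Your argument is correct and is precisely the route the paper intends: the paper's own proof is a one-line pointer to the analogous Markovian result (Corollary~4.11 in \cite{paperAF}), whose proof is exactly the polarization you describe---apply Proposition~\ref{VarQuadAF} to $M$, $N$, $M+N$, then feed the resulting non-decreasing AFs through Proposition~\ref{AngleBracketAF} and subtract. There is nothing to correct.
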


\begin{proof}
This can be proved as for Corollary 4.11 in \cite{paperAF}, replacing parameter $(s,x)$ with $(s,\eta)$.
\end{proof}

The result below concerns 
the Radon-Nikodym derivative of a non-decreasing continuous path-dependent AF with respect to some reference measure $dV$. Its proof is postponed to the Appendix.

\begin{proposition}\label{RadonDerivAF}
Let $V:\mathbbm{R}_+\longrightarrow \mathbbm{R}$ be a non-decreasing continuous function.
Let $A$ be a non-negative, non-decreasing path-dependent AF absolutely continuous with respect to $V$, and for any $(s,\eta)\in\mathbbm{R}_+\times \Omega$ let $A^{s,\eta}$ be the cadlag version of $A$ under $\mathbbm{P}^{s,\eta}$. There exists an $\mathbbm{F}^o$-progressively measurable process $h$  such that for any $(s,\eta)\in\mathbbm{R}_+\times \Omega$,  $A^{s,\eta}=\int_s^{\cdot \vee s}h_rdV_r$, in the sense of indistinguishability.
\end{proposition}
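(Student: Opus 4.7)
The natural strategy is to build $h$ explicitly as a pathwise $\limsup$ of dyadic difference quotients of the random field $(A_{t,u})$ against $V$, using a one-step lag in order to preserve $\mathbbm{F}^o$-adaptedness, and then to identify this candidate under each $\mathbbm{P}^{s,\eta}$ with the Radon--Nikodym derivative $dA^{s,\eta}/dV$ via a pathwise Lebesgue differentiation argument.

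Concretely, for every $n\geq 1$ I would introduce on $(\Omega,\mathcal{F})$ the process
\[
h^n_t(\omega)\ :=\ \sum_{k\geq 1}\frac{A_{(k-1)/2^n,\,k/2^n}(\omega)}{V_{k/2^n}-V_{(k-1)/2^n}}\,\mathds{1}_{\{V_{k/2^n}>V_{(k-1)/2^n}\}}\,\mathds{1}_{[k/2^n,\,(k+1)/2^n)}(t),
\]
with the convention $0/0=0$, and set $h:=\limsup_{n\to\infty}h^n$. On each dyadic interval $[k/2^n,(k+1)/2^n)$, $h^n$ is constant and equal to an $\mathcal{F}^o_{k/2^n}$-measurable r.v., so $h^n$ is right-continuous and $\mathbbm{F}^o$-adapted, hence $\mathbbm{F}^o$-progressively measurable; $h$ inherits this property as a countable upper envelope of progressive processes.

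The identification step would go as follows. Fix $(s,\eta)\in\mathbbm{R}_+\times\Omega$. For each $n$ and each $k$ with $(k-1)/2^n\geq s$, Definition \ref{DefAF} yields $A_{(k-1)/2^n,\,k/2^n}=A^{s,\eta}_{k/2^n}-A^{s,\eta}_{(k-1)/2^n}$ $\mathbbm{P}^{s,\eta}$-a.s. Hence, off a single $\mathbbm{P}^{s,\eta}$-null set and for every $t>s$, $h^n_t(\omega)$ is eventually (for $n$ large enough) the left-shifted dyadic difference quotient of $A^{s,\eta}(\omega)$ with respect to $V$. By hypothesis $dA^{s,\eta}(\omega)\ll dV$ for $\mathbbm{P}^{s,\eta}$-a.e.\ $\omega$, and $V$ is continuous; standard Lebesgue differentiation theory for Stieltjes measures then forces these quotients to converge $dV$-a.e.\ to the density $dA^{s,\eta}(\omega)/dV(t)$. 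Consequently $h_t(\omega)=dA^{s,\eta}(\omega)/dV(t)$ for $dV\otimes\mathbbm{P}^{s,\eta}$-a.e.\ $(t,\omega)\in[s,\infty)\times\Omega$. Integrating against $dV$ and using that $A^{s,\eta}$ vanishes on $[0,s]$ gives $\int_s^{t}h_r\,dV_r=A^{s,\eta}_t$ $\mathbbm{P}^{s,\eta}$-a.s.\ for each fixed $t\geq s$. Finally, because $V$ is continuous and $dA^{s,\eta}\ll dV$, both $t\mapsto A^{s,\eta}_t$ and $t\mapsto\int_s^{t\vee s}h_r\,dV_r$ have continuous sample paths, so the pointwise a.s.\ identity upgrades to indistinguishability.

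The main obstacle, in my view, is producing a \emph{single} $\mathbbm{F}^o$-progressive process $h$ that serves as the Radon--Nikodym density under every $\mathbbm{P}^{s,\eta}$ simultaneously: a pathwise Radon--Nikodym theorem applied probability by probability would only furnish an $\mathbbm{F}^{s,\eta}$-progressive density $h^{s,\eta}$ depending a priori on $(s,\eta)$. The dyadic formula above circumvents this by being built solely from the random field $(A_{t,u})$ and the deterministic function $V$, so it is manifestly a single $\mathbbm{F}^o$-progressive object; the link with the various cadlag versions $A^{s,\eta}$ is invoked only at the identification step, for each fixed $\mathbbm{P}^{s,\eta}$.
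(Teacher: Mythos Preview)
Your construction of $h$ as an $\mathbbm{F}^o$-progressive process is fine, and you have correctly identified the crux of the problem: obtaining a single $\mathbbm{F}^o$-progressive density valid under every $\mathbbm{P}^{s,\eta}$. However, the identification step contains a genuine gap.

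The issue is the one-step lag. For $t\in[k/2^n,(k+1)/2^n)$ with $t$ non-dyadic, the interval $[(k-1)/2^n,k/2^n]$ lies strictly to the left of $t$; the point $t$ is \emph{not} contained in it, not even as an endpoint. The Lebesgue differentiation theorem for a general non-atomic Radon measure $\nu=dV$ on $\mathbbm{R}$ asserts that $\mu(I)/\nu(I)\to\frac{d\mu}{d\nu}(t)$ for intervals $I$ shrinking to $t$ \emph{with $t\in I$}. For intervals not containing $t$, one needs a ``nicely shrinking'' condition relative to $\nu$: roughly, $\nu(I)\geq c\,\nu([t-r,t+r])$ whenever $I\subset[t-r,t+r]$. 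Your intervals satisfy this with respect to Lebesgue measure (eccentricity $\leq 2$), but for a general continuous $V$ there is no reason why $dV\bigl([(k-1)/2^n,k/2^n]\bigr)$ should be comparable to $dV\bigl([k/2^n,t]\bigr)$; the former can be arbitrarily small relative to the latter, and then the lagged quotient carries essentially no information about the density at $t$. So the sentence ``standard Lebesgue differentiation theory for Stieltjes measures then forces these quotients to converge $dV$-a.e.'' is not justified.

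The paper's proof resolves this differently and in a way that explains why Hypothesis~\ref{HypAF} is assumed throughout the section. It uses \emph{forward} increments, setting $K_t:=\liminf_n A_{t,t+1/n}/C_{t,t+1/n}$ with $C=A+V+\mathrm{id}$; here $t$ is the left endpoint of each interval, so the one-sided differentiation theorem for general atomless measures applies and yields the correct density. The price is that $K_t$ is only $\mathcal{F}_t$-measurable, not $\mathcal{F}^o_t$-measurable. The passage to an $\mathbbm{F}^o$-progressive process is then achieved by setting $k_t(\omega):=\mathbbm{E}^{t,\omega}[K_t]$ and invoking Hypothesis~\ref{HypClass} (so that $k_t=K_t$ $\mathbbm{P}^{s,\eta}$-a.s.) together with the progressiveness of the canonical class (so that $(t,\omega)\mapsto k_t(\omega)$ is $\mathbbm{F}^o$-progressive). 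In other words, the very structure your construction tries to bypass is what the paper uses to reconcile adaptedness with a differentiation theorem that actually applies.
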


\begin{proposition}\label{VarTotAF}
	Let $(A_{t,u})_{(t,u)\in\Delta}$ be a path-dependent AF with bounded variation, taking ${\mathcal L}^1$-terminal value. 
	Then there exists an increasing path-dependent AF that we denote $(Pos(A)_{t,u})_{(t,u)\in\Delta}$ (resp. $(Neg(A)_{t,u})_{(t,u)\in\Delta}$), which, for any $(s,\eta)\in\mathbbm{R}_+\times \Omega$, has $Pos(A^{s,\eta})$ (resp. $Neg(A^{s,\eta})$))
	as cadlag version
	under $\mathbbm{P}^{s,\eta}$. 
\end{proposition}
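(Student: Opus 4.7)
The plan is to construct $Pos(A)_{t,u}$ pathwise from the values of the random field $A$ itself, so that its $\mathcal{F}^o_u$-measurability is automatic, and then verify that under each $\mathbbm{P}^{s,\eta}$ it coincides almost surely with the increment of the positive variation $Pos(A^{s,\eta})$ of the cadlag version. The symmetric construction handles $Neg(A)$.

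Concretely, for $(t,u)\in\Delta$ I would set
\begin{equation*}
Pos(A)_{t,u}(\omega) \;:=\; \sup\Bigl\{\sum_{i=0}^{n-1}\bigl(A_{t_i,t_{i+1}}(\omega)\bigr)^+ : n\geq 1,\; t=t_0<t_1<\cdots<t_n=u,\; t_i\in\mathbbm{Q}\text{ for }1\leq i\leq n-1\Bigr\},
\end{equation*}
capped at $+\infty$ and redefined to $0$ on the (negligible under every $\mathbbm{P}^{s,\eta}$) event where this supremum is infinite, so as to keep values in $\mathbbm{R}$. Since this is a countable supremum of $\mathcal{F}^o_u$-measurable random variables (each $A_{t_i,t_{i+1}}$ being $\mathcal{F}^o_{t_{i+1}}$-measurable), the first item of Definition \ref{DefAF} holds. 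The candidate cadlag version under $\mathbbm{P}^{s,\eta}$ is then $Pos(A)^{s,\eta}:=Pos(A^{s,\eta})$, the positive variation process of $A^{s,\eta}$: it is cadlag, non-decreasing, $\mathbbm{F}^{s,\eta}$-adapted, and vanishes on $[0,s]$ since $A^{s,\eta}$ does.

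The crux is to show, for every $(s,\eta)$ and every $s\le t\le u$, the identity
\begin{equation*}
Pos(A)_{t,u} \;=\; Pos(A^{s,\eta})_u - Pos(A^{s,\eta})_t \quad \mathbbm{P}^{s,\eta}\text{-a.s.}
\end{equation*}
The right-hand side is the total positive variation $V^+_{[t,u]}(A^{s,\eta})$ of a cadlag BV function, which by definition equals the supremum of $\sum_i (A^{s,\eta}_{t_{i+1}}-A^{s,\eta}_{t_i})^+$ over all finite partitions of $[t,u]$. The key technical point is that for a cadlag function this supremum is already attained, up to any $\varepsilon$, by partitions whose interior points lie in $\mathbbm{Q}$: given any partition $t=t_0<t_1<\cdots<t_n=u$, approximate each interior $t_i$ from the right by $t_i'\in\mathbbm{Q}$ with $t_i\le t_i'<t_{i+1}$; right-continuity of $A^{s,\eta}$ gives $A^{s,\eta}_{t_i'}\to A^{s,\eta}_{t_i}$, whence $\sum_i (A^{s,\eta}_{t_{i+1}'}-A^{s,\eta}_{t_i'})^+ \to \sum_i (A^{s,\eta}_{t_{i+1}}-A^{s,\eta}_{t_i})^+$ (with $t_0'=t$, $t_n'=u$ kept fixed). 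Combining this with the countable $\mathbbm{P}^{s,\eta}$-a.s. identity $A_{r,r'}=A^{s,\eta}_{r'}-A^{s,\eta}_{r}$ for all $r,r'\in(\mathbbm{Q}\cap(t,u))\cup\{t,u\}$, $r\le r'$, yields the claimed almost sure equality.

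The same construction with $(\cdot)^+$ replaced by $(\cdot)^-$ produces $Neg(A)_{t,u}$, with cadlag version $Neg(A^{s,\eta})$ under $\mathbbm{P}^{s,\eta}$. The main obstacle is really the density step in the previous paragraph: making sure that restricting to rational interior points of the partition does not lose mass in the positive variation, which is where right-continuity of $A^{s,\eta}$ is used. Everything else (measurability, non-decreasingness, vanishing on $[0,s]$, the additive structure of the bracket-like random field) follows either from the definition or from standard properties of the Jordan decomposition of cadlag BV processes.
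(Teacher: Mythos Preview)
Your argument is correct. The pathwise supremum over rational-interior partitions is $\mathcal{F}^o_u$-measurable by countability, the redefinition on the set where the sup is infinite is harmless (that set is $\mathcal{F}^o_u$-measurable, and under any $\mathbbm{P}^{s,\eta}$ with $s\le t$ it is null since there the sup coincides a.s. with the finite positive variation of the cadlag version), and the density step---replacing arbitrary interior partition points by right rational approximants and invoking right-continuity of $A^{s,\eta}$---does exactly what you claim. The countable intersection of the a.s. identities $A_{r,r'}=A^{s,\eta}_{r'}-A^{s,\eta}_r$ over rational pairs in $(t,u)$ together with the fixed endpoints $t,u$ is the right way to pass from the random field to the cadlag version simultaneously. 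The adaptedness of $Pos(A^{s,\eta})$ is standard for cadlag BV processes.

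As for comparison with the paper: the paper gives no self-contained proof here, deferring entirely to Proposition~4.14 of \cite{paperAF} with the cosmetic replacement $(s,x)\mapsto(s,\eta)$. Judging from the proofs the paper does spell out in its appendix (for $[M]$ and for the dual predictable projection), the companion paper's method is likely more indirect: it builds, for each fixed $\eta$, an $\mathcal{F}^o_u$-measurable version $a_{t,u}(\eta,\cdot)$ of $Pos(A^{t,\eta})_u-Pos(A^{t,\eta})_t$, upgrades to a jointly measurable kernel $(\eta,\omega)\mapsto a_{t,u}(\eta,\omega)$ via Theorem~58 in Chapter~V of \cite{dellmeyerB}, and then diagonalizes $Pos(A)_{t,u}(\omega):=a_{t,u}(\omega,\omega)$. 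Your route bypasses that abstract machinery entirely by writing down an explicit $\mathbbm{F}^o$-measurable formula for $Pos(A)_{t,u}$ and checking it against the cadlag versions directly; this is more elementary and arguably more transparent, at the small cost of the density-of-rational-partitions lemma. The abstract route has the advantage of being a uniform template (it is the same argument used for $[M]$ and $\langle M\rangle$), but yours is self-contained.
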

\begin{proof}
This can be proved similarly as forProposition 4.14 in \cite{paperAF}, replacing parameter $(s,x)$ with $(s,\eta)$.
\end{proof}

\begin{corollary}\label{BracketMAFnew}
	Let $V$ be a continuous non-decreasing function.
	Let $M$ and $N$ be two square integrable path-dependent MAFs and let $M^{s,\eta}$ (respectively $N^{s,\eta}$) be the cadlag version of $M$ (respectively $N$) under a fixed $\mathbbm{P}^{s,\eta}$. Assume that  $\langle N\rangle$ is absolutely continuous with respect to $dV$.
	There exists an $\mathbbm{F}^o$-progressively measurable process  $k$  such that for any $(s,\eta)\in\mathbbm{R}_+\times \Omega$, $\langle M^{s,\eta},N^{s,\eta}\rangle =  \int_s^{\cdot\vee s}k_rdV_r$.
\end{corollary}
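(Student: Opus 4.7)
The plan is to reduce the statement to the non-negative, non-decreasing case, where Proposition \ref{RadonDerivAF} applies, via a Hahn-type decomposition of $\langle M, N\rangle$ combined with the Kunita–Watanabe inequality to transfer absolute continuity.

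First, Corollary \ref{AFbracket} provides the bounded variation path-dependent AF $\langle M, N\rangle$ with $\mathcal{L}^1$-terminal value, whose cadlag version under each $\mathbbm{P}^{s,\eta}$ is precisely $\langle M^{s,\eta}, N^{s,\eta}\rangle$. I then apply Proposition \ref{VarTotAF} to write
\begin{equation*}
\langle M, N\rangle \;=\; Pos(\langle M, N\rangle)\;-\;Neg(\langle M, N\rangle),
\end{equation*}
where both $Pos(\langle M, N\rangle)$ and $Neg(\langle M, N\rangle)$ are non-negative, non-decreasing path-dependent AFs with $\mathcal{L}^1$-terminal value, whose cadlag versions under $\mathbbm{P}^{s,\eta}$ are the corresponding positive and negative variations of $\langle M^{s,\eta}, N^{s,\eta}\rangle$.

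The key step is to establish that these two AFs are absolutely continuous with respect to $V$ in the sense required by Proposition \ref{RadonDerivAF}. Fix $(s,\eta)$. By the Kunita–Watanabe inequality, applied $\omega$ by $\omega$ to the measures generated by $\langle M^{s,\eta}\rangle$, $\langle N^{s,\eta}\rangle$ and $\langle M^{s,\eta}, N^{s,\eta}\rangle$ under $\mathbbm{P}^{s,\eta}$, we have for every Borel set $B \subset \mathbbm{R}_+$
\begin{equation*}
\int_B \bigl|d\langle M^{s,\eta}, N^{s,\eta}\rangle_r\bigr| \;\le\; \Bigl(\int_B d\langle M^{s,\eta}\rangle_r\Bigr)^{1/2}\Bigl(\int_B d\langle N^{s,\eta}\rangle_r\Bigr)^{1/2},
\end{equation*}
for $\mathbbm{P}^{s,\eta}$-almost all $\omega$. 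Hence the total variation measure of $\langle M^{s,\eta}, N^{s,\eta}\rangle$ is absolutely continuous with respect to $d\langle N^{s,\eta}\rangle$, and so are its positive and negative parts. Combined with the standing hypothesis $d\langle N^{s,\eta}\rangle \ll dV$, this yields that $Pos(\langle M, N\rangle)$ and $Neg(\langle M, N\rangle)$ are both absolutely continuous with respect to $V$ in the sense of Definition \ref{DefAF}.

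Proposition \ref{RadonDerivAF} then produces two $\mathbbm{F}^o$-progressively measurable processes $k^+, k^-$ such that, under every $\mathbbm{P}^{s,\eta}$,
\begin{equation*}
Pos(\langle M^{s,\eta}, N^{s,\eta}\rangle)_{\cdot} = \int_s^{\cdot\vee s} k^+_r\, dV_r, \qquad Neg(\langle M^{s,\eta}, N^{s,\eta}\rangle)_{\cdot} = \int_s^{\cdot\vee s} k^-_r\, dV_r,
\end{equation*}
in the sense of indistinguishability. Setting $k := k^+ - k^-$ gives the desired progressively measurable process, and subtracting the two identities yields $\langle M^{s,\eta}, N^{s,\eta}\rangle = \int_s^{\cdot\vee s} k_r\, dV_r$. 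The only non-routine step is the Kunita–Watanabe argument used to transfer absolute continuity from $\langle N\rangle$ to the two (signed) variation parts of $\langle M, N\rangle$; the rest is a direct assembly of the results already established in the section.
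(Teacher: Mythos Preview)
Your proof is correct and follows essentially the same approach as the paper: the paper's own proof invokes Corollary \ref{AFbracket}, Proposition \ref{VarTotAF}, and Proposition \ref{RadonDerivAF} in exactly this order (by reference to Proposition 4.17 in \cite{paperAF}), and your Kunita--Watanabe argument is precisely the step needed to verify the absolute continuity hypothesis of Proposition \ref{RadonDerivAF} for $Pos(\langle M,N\rangle)$ and $Neg(\langle M,N\rangle)$.
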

\begin{proof}
The proof follows the same lines as the one of
 Proposition 4.17 in \cite{paperAF} replacing parameter $(s,x)$ by
 $(s,\eta)$ and Borel functions of $(t,X_t)$ with $\mathbbm{F}^o$-progressively measurable processes. 
We make use of Corollary \ref{AFbracket}, Propositions
\ref{VarTotAF} and  \ref{RadonDerivAF},
  respectively in substitution of
Corollary 4.11 an Propositions 4.14 and 4.13.
\end{proof}

\begin{corollary}
Let $V$ be a continuous non-decreasing function.
Let $M$ (resp. $N$) be an $\mathbbm{F}^o$-adapted process such that for all $(s,\eta)$, $M$ (resp. $N$) is on $[s,+\infty[$ a $(\mathbbm{P}^{s,\eta},\mathbbm{F}^o)$ square integrable martingale. 
For any $(s,\eta)$, let $M^{s,\eta}$ (resp. $N^{s,\eta}$) denote its $\mathbbm{P}^{s,\eta}$-cadlag version.
 Assume that for all $(s,\eta)$, $d\langle N^{s,\eta}\rangle\ll dV$.
\\
Then there exists an $\mathbbm{F}^o$-progressively measurable process  $k$  such that for any $(s,\eta)\in\mathbbm{R}_+\times \Omega$, $\langle M^{s,\eta},N^{s,\eta}\rangle =  \int_s^{\cdot\vee s}k_rdV_r$.
\end{corollary}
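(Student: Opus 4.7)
The plan is to reduce this statement to the previous Corollary \ref{BracketMAFnew} by passing from the $\mathbbm{F}^o$-adapted processes $M$ and $N$ to path-dependent MAFs in the sense of Definition \ref{DefAF}, and then applying Corollary \ref{BracketMAFnew} verbatim.

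First I would define the two-parameter random fields $\widetilde{M}_{t,u}(\omega) := M_u(\omega)-M_t(\omega)$ and $\widetilde{N}_{t,u}(\omega) := N_u(\omega)-N_t(\omega)$ on $\Delta$. These are $\mathcal{F}^o_u$-measurable since $M,N$ are $\mathbbm{F}^o$-adapted. By Lemma \ref{LemMAF}, applied separately to $M$ and to $N$ (whose hypotheses are exactly those in the present statement), each of $\widetilde{M}$ and $\widetilde{N}$ is a path-dependent MAF in the sense of Definition \ref{DefAF}, and its cadlag version under $\mathbbm{P}^{s,\eta}$ coincides (up to $\mathbbm{P}^{s,\eta}$-indistinguishability) with $M^{s,\eta}$ (resp. $N^{s,\eta}$). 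Since by assumption each $M^{s,\eta}$ and $N^{s,\eta}$ is $\mathbbm{P}^{s,\eta}$-square integrable, $\widetilde{M}$ and $\widetilde{N}$ are square integrable path-dependent MAFs.

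Next, since the cadlag versions produced by Lemma \ref{LemMAF} are indistinguishable under $\mathbbm{P}^{s,\eta}$ from $M^{s,\eta}$ and $N^{s,\eta}$, the angular brackets $\langle M^{s,\eta}, N^{s,\eta}\rangle$ computed in $(\Omega,\mathcal{F}^{s,\eta},\mathbbm{F}^{s,\eta},\mathbbm{P}^{s,\eta})$ coincide with those associated to the cadlag versions of $\widetilde{M},\widetilde{N}$. The hypothesis $d\langle N^{s,\eta}\rangle\ll dV$ for every $(s,\eta)$ is therefore precisely the absolute continuity hypothesis required by Corollary \ref{BracketMAFnew} applied to the square integrable path-dependent MAFs $\widetilde{M}$ and $\widetilde{N}$.

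Applying Corollary \ref{BracketMAFnew} then directly produces an $\mathbbm{F}^o$-progressively measurable process $k$ such that, for every $(s,\eta)\in\mathbbm{R}_+\times\Omega$, $\langle M^{s,\eta},N^{s,\eta}\rangle = \int_s^{\cdot\vee s} k_r\, dV_r$, which is the desired conclusion. There is essentially no obstacle here: the entire content of the statement has already been packaged into Lemma \ref{LemMAF} (which produces the cadlag MAF structure from mere $\mathbbm{F}^o$-adaptedness plus the martingale property under every $\mathbbm{P}^{s,\eta}$) and Corollary \ref{BracketMAFnew} (which gives the universal progressively measurable Radon--Nikodym derivative); the only mild subtlety to check is the compatibility of cadlag versions up to indistinguishability, which follows from uniqueness of cadlag modifications on a usual-conditions basis.
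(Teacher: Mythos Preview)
Your proposal is correct and follows exactly the paper's approach: the paper's proof simply cites Lemma \ref{LemMAF} to produce the path-dependent MAF structure and then invokes Corollary \ref{BracketMAFnew}. Your write-up merely fills in the straightforward verifications (measurability, square integrability, matching of cadlag versions) that the paper leaves implicit.
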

\begin{proof}
The mentioned cadlag versions exist because of 
 Lemma \ref{LemMAF}. The statement follows by 
the same Lemma  \ref{LemMAF} and
Corollary \ref{BracketMAFnew}.

\end{proof}

\section{Path-dependent Martingale problems}
\label{SMP}

\subsection{Abstract Martingale Problems}\label{S3_1}

In this section we show that, whenever a (path-dependent) martingale problem 
is well-posed, then its solution is a 
path-dependent canonical class verifying Hypothesis \ref{HypClass}.
 This relies on the same mathematical tools than those used by D.S Stroock and S.R.S Varadhan in the context of Markovian diffusions in \cite{stroock}. 
Indeed it was already 
known that the ideas of \cite{stroock}
could be used in any type of Markovian setup and 
not just for martingale problems associated to diffusions, see \cite{EthierKurz} for example. One of the interests of the following lines is to show that 
their scope goes beyond the Markovian framework.
First we prove that $\eta\mapsto \mathbbm{P}^{s,\eta}$ is measurable, 
using  well-posedness arguments and the celebrated Kuratowsky Theorem.
Then we show in Proposition \ref{MPimpliesCond} 
that the solution of the martingale problem  verifies \eqref{DE14},
which is the analogous formulation of Markov property,
through the theory of regular conditional expectations and 
again the fact that the martingale problem is well-posed.

\begin{notation}\label{Omegat}
	For every $t\in\mathbbm{R}_+$, $\Omega^t:=\{\omega\in\Omega:\omega=\omega^t\}$ will denote the set of constant paths  after time $t$.
	We also denote $\Lambda:=\{(s,\eta)\in\mathbbm{R}_+\times\Omega: \eta\in\Omega^s\}$. 
\end{notation}

\begin{proposition}
\begin{enumerate}\
\item	$\Lambda$ is a closed subspace of $\mathbbm{R}_+\times\Omega$, hence a Polish space when equipped with the induced topology. 
\item	For any  $t\in\mathbbm{R}_+$, $\Omega^t$ is also a closed subspace of
 $\Omega$.
\end{enumerate}
\end{proposition}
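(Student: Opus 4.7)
The plan is to prove both items by sequential arguments, exploiting the fact that the Skorokhod space $\Omega=\mathbbm{D}(\mathbbm{R}_+,E)$ and the product $\mathbbm{R}_+\times\Omega$ are metrizable. Once closedness is established, both ``Polish'' conclusions are automatic, since a closed subset of a Polish space is Polish in the induced topology.

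For item 2, I would fix $t\in\mathbbm{R}_+$ and take an arbitrary sequence $(\omega_n)_n$ in $\Omega^t$ converging to some $\omega\in\Omega$ in the Skorokhod topology. Writing $c_n:=\omega_n(t)$, so that $\omega_n(r)=c_n$ for every $r\geq t$, the goal is to show $\omega\in\Omega^t$. The key input I would invoke is the classical fact that Skorokhod convergence on $\mathbbm{D}(\mathbbm{R}_+,E)$ implies pointwise convergence at every continuity point of the limit path $\omega$. Applied to any continuity point $r>t$ of $\omega$, this gives $c_n=\omega_n(r)\to\omega(r)$, so $\omega(r)$ coincides with a common value $c:=\lim_n c_n$ at every continuity point of $\omega$ in $(t,+\infty)$. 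Since $\omega$ is cadlag, its continuity points are dense in $(t,+\infty)$, and right-continuity of $\omega$ then upgrades this to $\omega(r)=c$ for every $r>t$; right-continuity at $t$ yields $\omega(t)=c$ as well, so $\omega\in\Omega^t$.

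For item 1, the strategy is identical. I would take a convergent sequence $(s_n,\eta_n)\to(s,\eta)$ in $\mathbbm{R}_+\times\Omega$ with each $\eta_n\in\Omega^{s_n}$. For any continuity point $r>s$ of $\eta$, eventually $r>s_n$, so $\eta_n(s_n)=\eta_n(r)\to\eta(r)$. The same density-plus-right-continuity argument as above then forces $\eta$ to be constant on $[s,+\infty)$, so $(s,\eta)\in\Lambda$.

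The only genuinely subtle point is the invocation of ``Skorokhod convergence implies pointwise convergence at continuity points of the limit'', which requires care when the time variable $s$ (or $t$) is itself a discontinuity of $\eta$ (resp.\ $\omega$); this is handled by testing only at continuity points $r>s$ and closing the argument by right-continuity at $s$. Since this is a standard property of $\mathbbm{D}(\mathbbm{R}_+,E)$, I anticipate no real obstacle, and the remainder of the proof is a short topological exercise.
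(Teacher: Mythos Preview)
Your proof is correct and follows essentially the same sequential closedness argument as the paper. The only minor difference is the tool invoked for passing from Skorokhod convergence to pointwise information: the paper uses that Skorokhod convergence implies Lebesgue-a.e.\ convergence (together with an $\epsilon$-subsequence argument), whereas you use the equally standard fact that Skorokhod convergence implies pointwise convergence at continuity points of the limit, which makes the argument slightly more direct.
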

\begin{proof}
	We will only show the first statement since the proof of the second one is similar but simpler. 
	Let $(s_n,\eta_n)_n$ be a sequence in $\Lambda$.
	Let 
	$(s,\eta)\in \mathbbm{R}_+\times\Omega$ and assume that $s_n\rightarrow s$ and that $\eta_n$ tends to $\eta$ for the Skorokhod topology. Then $\eta_n$ tends to $\eta$ Lebesgue a.e. Let $\epsilon>0$. There is a 
	subsequence  $(s_{n_k})$ such that
	$|s_{n_k}-s|\le \epsilon$, implying that for all $k$, $\eta_{n_k}$ is constantly equal to $\eta_{n_k}(s_{n_k})$ on
	$[s+\epsilon,+\infty[$. Since $\eta_n$ tends to $\eta$ Lebesgue a.e., then necessarily, $\eta_{n_k}(s_{n_k})$
	tends to some $c\in E$ and $\eta$ takes value $c$ a.e. on $[s+\epsilon,+\infty[$. This holds for every 
	$\epsilon$, and $\eta$ is cadlag, so $\eta$ is constantly equal to $c$ on $[s,+\infty[$, implying that $(s,\eta)\in\Lambda$. 
\end{proof}
From now on, $\Lambda$, introduced in Notation \ref{Omegat}, is equipped with the trace topology. 

\begin{proposition}\label{BorelLambda}
	The Borel $\sigma$-field $\mathcal{B}(\Lambda)$ is equal to the trace $\sigma$-field $\Lambda\cap\mathcal{P}ro^o$. For any  $t\in\mathbbm{R}_+$, the Borel $\sigma$-field $\mathcal{B}(\Omega^t)$ is equal to the 
	trace $\sigma$-field $\Omega^t\cap\mathcal{F}^o_t$.
\end{proposition}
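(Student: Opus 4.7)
The two identities have the same flavour, so I will describe the argument in parallel. Because $E$ and hence $\Omega$ are Polish (Notation \ref{canonicalspace}) and $\Omega^t$ and $\Lambda$ are closed subsets, the Borel $\sigma$-field of each subspace is just the trace of the ambient Borel $\sigma$-field. Since $\mathbb{R}_+\times\Omega$ with the product topology is Polish, its Borel $\sigma$-field is $\mathcal{B}(\mathbb{R}_+)\otimes \mathcal{F}$; therefore
\[
\mathcal{B}(\Omega^t)=\Omega^t\cap\mathcal{F},\qquad \mathcal{B}(\Lambda)=\Lambda\cap\bigl(\mathcal{B}(\mathbb{R}_+)\otimes\mathcal{F}\bigr).
\]
The inclusions $\Omega^t\cap\mathcal{F}^o_t\subset\Omega^t\cap\mathcal{F}$ and $\Lambda\cap\mathcal{P}ro^o\subset\Lambda\cap(\mathcal{B}(\mathbb{R}_+)\otimes\mathcal{F})$ are trivial, so the real content is the reverse inclusions, which I will obtain via the two natural retractions
\[
r_t:\Omega\to\Omega^t,\ \omega\mapsto \omega^t,\qquad \psi:\mathbb{R}_+\times\Omega\to\Lambda,\ (s,\omega)\mapsto(s,\omega^s).
\]

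For $\Omega^t$, the key observation is that for every $r\in\mathbb{R}_+$, $X_r\circ r_t(\omega)=\omega(r\wedge t)=X_{r\wedge t}(\omega)$ is $\mathcal{F}^o_t$-measurable. Since $\mathcal{F}=\sigma(X_r,r\in\mathbb{R}_+)$, a standard generating argument shows $r_t$ is $(\mathcal{F}^o_t,\mathcal{F})$-measurable. Hence for every $A\in\mathcal{F}$ the set $B:=r_t^{-1}(A)$ lies in $\mathcal{F}^o_t$, and since $r_t$ is the identity on $\Omega^t$ we have $\Omega^t\cap A=\Omega^t\cap B$. This yields $\Omega^t\cap\mathcal{F}\subset \Omega^t\cap\mathcal{F}^o_t$.

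For $\Lambda$, I would like to run the same argument using $\psi$ instead of $r_t$, this time checking that $\psi$ is measurable from $(\mathbb{R}_+\times\Omega,\mathcal{P}ro^o)$ to $(\mathbb{R}_+\times\Omega,\mathcal{B}(\mathbb{R}_+)\otimes\mathcal{F})$. The first coordinate of $\psi$ is trivially $\mathcal{P}ro^o$-measurable, so it is enough to show that for every $r\in\mathbb{R}_+$ the map $(s,\omega)\mapsto X_r(\omega^s)=\omega(r\wedge s)$ is $\mathcal{P}ro^o$-measurable; a monotone class argument on the generators $X_r$ then lifts this to all of $\mathcal{F}$. But $(s,\omega)\mapsto \omega(r\wedge s)$ is exactly the stopped process $X^r$ evaluated at time $s$, and $X^r$ is $\mathbb{F}^o$-adapted and cadlag, hence $\mathbb{F}^o$-progressively measurable. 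Once $\psi$ is shown $\mathcal{P}ro^o$-measurable, for every $A\in\mathcal{B}(\mathbb{R}_+)\otimes\mathcal{F}$ the set $B:=\psi^{-1}(A)$ belongs to $\mathcal{P}ro^o$, and because $\psi$ acts as the identity on $\Lambda$ (since $\eta=\eta^s$ there), we conclude $\Lambda\cap A=\Lambda\cap B$, proving the non-trivial inclusion.

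The only subtle point is the progressive measurability of the evaluation map $(s,\omega)\mapsto \omega(r\wedge s)$, which I expect to be the main (though modest) obstacle; it reduces to the standard fact that a cadlag adapted process with values in a Polish space is progressively measurable, applied to each stopped process $X^r$. Everything else is a routine generating/monotone class argument together with the topological identification $\mathcal{B}(\Omega)=\mathcal{F}$ recalled in Notation \ref{canonicalspace}.
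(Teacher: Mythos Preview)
Your proof is correct and follows essentially the same approach as the paper. Both arguments reduce the non-trivial inclusion to the $\mathbb{F}^o$-progressive measurability of $(s,\omega)\mapsto X_{r\wedge s}(\omega)$, obtained from the fact that the stopped coordinate process $X^r$ is cadlag and $\mathbb{F}^o$-adapted; the only cosmetic difference is that you package the generator computation via the retractions $r_t$ and $\psi$, whereas the paper writes out explicitly that $\Lambda\cap([s,u]\times\{\omega(r)\in A\})=\Lambda\cap\{(t,\omega):t\in[s,u],\ \omega(r\wedge t)\in A\}$ and then checks the right-hand side lies in $\mathcal{P}ro^o$.
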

\begin{proof}
	Again we only show the first statement since the proof of the second one is similar.
	By definition of the topology on $\Lambda$, it is clear that $\mathcal{B}(\Lambda)=\Lambda\cap\mathcal{B}(\mathbbm{R}_+\times\Omega)=\Lambda\cap(\mathcal{B}(\mathbbm{R}_+)\otimes\mathcal{F})$ contains $\Lambda\cap\mathcal{P}ro^o$. We show the converse inclusion. The sets $\Lambda\cap([s,u]\times\{\omega(r)\in A\})$  for $s,u,r\in\mathbbm{R}_+$ with $s\leq u$, $A\in\mathcal{B}(E)$ generate $\Lambda\cap(\mathcal{B}(\mathbbm{R}_+)\otimes\mathcal{F})$ so it is enough to show that these sets belong to $\Lambda\cap\mathcal{P}ro^o$.
	
	We fix $s\leq u$ and $r$ in $\mathbbm{R}_+$, and $A\in\mathcal{B}(E)$. We have
	\begin{equation}
	\begin{array}{rcl}
	\Lambda\bigcap\left([s,u]\times\{\omega(r)\in A\}\right) &=& \left\{(t,\omega):\left\{\begin{array}{l}t\in[s,u]\\ \omega=\omega^t\\\omega(r)\in A
	\end{array}\right.\right\}   \\
	&=& \left\{(t,\omega):\left\{\begin{array}{l}t\in[s,u]\\ \omega=\omega^t\\\omega(r\wedge t)\in A
	\end{array}\right.\right\}   \\
	&=& \Lambda\bigcap\left\{(t,\omega):\left\{\begin{array}{l}t\in[s,u]\\ \omega(r\wedge t)\in A.
	\end{array}\right.\right\}
	\end{array}.
	\end{equation}
	We are left to show that $\left\{(t,\omega):\left\{\begin{array}{l}t\in[s,u]\\ \omega(r\wedge t)\in A
	\end{array}\right.\right\}\in\mathcal{P}ro^o$, or equivalently that
	\begin{equation} \label{EProgressive}
	t\mapsto \mathds{1}_{[s,u]}(t)\mathds{1}_A(X_{r\wedge t}) \text{ is }\mathbbm{F}^o-\text{progressively measurable}.
	\end{equation}
	Now $t \mapsto X_{r\wedge t}$ is right-continuous and $\mathbbm{F}^o$-adapted so it is an
	$E$-valued $\mathbbm{F}^o$-progressively measurable process, 
	see Theorem 15 in \cite{dellmeyer75} Chapter IV. By composition with a Borel function, $t\mapsto\mathds{1}_A(X_{r\wedge t})$ is a real-valued $\mathbbm{F}^o$-progressively measurable process; 
	\eqref{EProgressive} follows since 
	$t\mapsto\mathds{1}_{[s,u]}(t)$ is $\mathbbm{F}^o$-progressively measurable and
	the product 
	of the two   $\mathbbm{F}^o$-progressively measurable processes remains 
	$\mathbbm{F}^o$-progressively measurable.
\end{proof} 

\begin{definition} \label{D44}
	Let $(s,\eta)\in\Lambda$ and $\chi$ be a set of   $\mathbbm{F}^o$-adapted processes. We say that a probability measure
	$\mathbbm{P}$ on $(\Omega,\mathcal{F})$ \textbf{solves the martingale problem with respect to $\chi$ starting in $(s,\eta)$} if 
	\begin{itemize}
		\item $\mathbbm{P}(\omega^s=\eta^s)=1$,
		\item all elements 
		of $\chi$ are on $[s,+\infty[$ $(\mathbbm{P},\mathbbm{F}^o)$-martingales. 
	\end{itemize}
\end{definition}

\begin{remark}\label{cadlag} 
We insist on the following important fact. If $M\in\chi$ is cadlag and $\mathbbm{P}$ solves the martingale problem associated to $\chi$, then by Theorem 3 in \cite{dellmeyerB} Chapter VI,  $M$ is also on $[s,+\infty[$ a $(\mathbbm{P},\mathbbm{F})$-martingale.
\end{remark}

\begin{notation}
	For fixed $(s,\eta)\in\Lambda$ and $\chi$, the set of probability measures solving the martingale problem with respect to $\chi$ starting in $(s,\eta)$ will be denoted $MP^{s,\eta}(\chi)$.
\end{notation}

\begin{definition}\label{MP}
	Let us consider a set $\chi$  of processes. If for every $(s,\eta)\in\Lambda$, $MP^{s,\eta}(\chi)$ is reduced to a single element $\mathbbm{P}^{s,\eta}$, we will say that the martingale problem associated to $\chi$ is \textbf{well-posed}. In this case we will always extend the mapping 
	\begin{equation}
	\begin{array}{ccl}
	(s,\eta)&\longmapsto&\mathbbm{P}^{s,\eta}\\
	\Lambda&\longrightarrow&\mathcal{P}(\Omega)
	\end{array}
	\end{equation}
	to $\mathbbm{R}_+\times \Omega$ by setting for all $(s,\eta)\in\mathbbm{R}_+\times \Omega$,  $\mathbbm{P}^{s,\eta}:=\mathbbm{P}^{s,\eta^s}$.
\end{definition}

\begin{notation}\label{PiSystem}
	We fix a dense sequence $(x_n)_{n\geq 0}$ of elements of $E$.
	\\
	For any $s\in\mathbbm{R}_+$, we will denote by $\Pi_s$ the set of elements of $\mathcal{F}^o_s$ of type $\{\omega(t_1)\in B(x_{i_1},r_1),\cdots,\omega(t_N)\in B(x_{i_N},r_N)\}$ where $N\in\mathbbm{N}$, $t_1,\cdots,t_N\in[0,s]\cap\mathbbm{Q}$, $i_1,\cdots,i_N\in\mathbbm{N}$, $r_1,\cdots,r_N\in\mathbbm{Q}_+$ and where $B(x,r)$ denotes the open ball centered in $x$ and of radius $r$.
\end{notation}
It is easy to show that for any $s\in\mathbbm{R}_+$, $\Pi_s$ is a countable 
$\pi$-system generating $\mathcal{F}^o_s$, see \cite{aliprantis} Definition 4.9
for the notions of $\pi$-system and $\lambda$-system.

Below we consider the set $\mathcal{A}_s$  of probability measures $\mathbbm{P}$ on $(\Omega,\mathcal{F})$ for which there exists $\eta\in \Omega$ such that $\mathbbm{P}$ solves the martingale problem with respect to $\chi$ starting at
 $(s,\eta)$. 
\begin{proposition} \label{Llocbound}
	We fix a countable set $\chi$ of  cadlag $\mathbbm{F}^o$-adapted processes which are uniformly bounded on each interval $[0,T]$, and some $s\in\mathbbm{R}_+$.  Let $\mathcal{A}_s:=\underset{\eta\in\Omega}{\bigcup}MP^{s,\eta}(\chi)$.
	Then $\mathcal{A}_s$ is a Borel set of $\mathcal{P}(\Omega)$.
\end{proposition}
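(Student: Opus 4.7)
The plan is to write $\mathcal{A}_s$ as the intersection of two sets, each of which I will show separately to be Borel in $\mathcal{P}(\Omega)$. Define
\begin{align*}
\mathcal{A}_s^{(1)} &:= \{\mathbbm{P}\in\mathcal{P}(\Omega) : \exists\,\eta\in\Omega,\ \mathbbm{P}(\omega^s=\eta^s)=1\},\\
\mathcal{A}_s^{(2)} &:= \{\mathbbm{P}\in\mathcal{P}(\Omega) : \text{every }M\in\chi\text{ is on }[s,+\infty[\text{ a }(\mathbbm{P},\mathbbm{F}^o)\text{-martingale}\}.
\end{align*}
Since the martingale condition does not involve $\eta$, we have $\mathcal{A}_s=\mathcal{A}_s^{(1)}\cap\mathcal{A}_s^{(2)}$, and I will establish each factor to be Borel.

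For $\mathcal{A}_s^{(2)}$ I would invoke the standard fact that for any bounded Borel $F:\Omega\to\mathbbm{R}$ the map $\mathbbm{P}\mapsto\mathbbm{E}^{\mathbbm{P}}[F]$ is Borel measurable (monotone class argument starting from bounded continuous $F$, where it is continuous by definition of the weak topology). Since $\chi$ is countable, each $M\in\chi$ is cadlag and uniformly bounded on compact intervals, and $\mathcal{F}^o_t$ is generated by the countable $\pi$-system $\Pi_t$ of Notation \ref{PiSystem}, the $(\mathbbm{P},\mathbbm{F}^o)$-martingale property of $M$ on $[s,+\infty[$ is equivalent to the countable family of equalities
$$
\mathbbm{E}^{\mathbbm{P}}\!\left[(M_u-M_t)\mathds{1}_B\right]=0
\quad\text{for all }t,u\in\mathbbm{Q}\cap[s,+\infty[\text{ with }t\le u,\ B\in\Pi_t,
$$
together with the corresponding conditions at $t=s$; extension to arbitrary real $t\le u$ is automatic by right-continuity of $M$ and dominated convergence. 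Each such equality cuts out a Borel subset of $\mathcal{P}(\Omega)$, and a countable intersection of Borel sets is Borel.

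For $\mathcal{A}_s^{(1)}$ I would reformulate membership as: the image $\mathbbm{P}\circ e_s^{-1}$ under the Borel map $e_s:\omega\mapsto\omega^s$ is a Dirac mass on the Polish space $\Omega^s$ of Notation \ref{Omegat}. On any separable metric space, a probability measure is a Dirac mass if and only if every Borel set has measure $0$ or $1$ (cover by $1/n$-balls around a dense sequence and intersect the full-measure ones to produce the unique atom), so this is in turn equivalent, via the $\pi$-$\lambda$ theorem applied to $\Pi_s\subseteq\mathcal{F}^o_s$, to
$$
\mathbbm{P}(B)\in\{0,1\}\quad\text{for every }B\in\Pi_s.
$$
Since every evaluation $\mathbbm{P}\mapsto\mathbbm{P}(B)$ is Borel and $\{0,1\}$ is a Borel subset of $[0,1]$, $\mathcal{A}_s^{(1)}$ is a countable intersection of Borel sets. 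An alternative route would apply Kuratowski's theorem to the continuous injection $\eta\mapsto\delta_\eta$ from $\Omega^s$ into $\mathcal{P}(\Omega^s)$ to see that the set of Dirac masses is Borel, then use Borel measurability of the pushforward map $\mathbbm{P}\mapsto\mathbbm{P}\circ e_s^{-1}$.

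The main obstacle is the apparent uncountable existential in the definition of $\mathcal{A}_s^{(1)}$: converting "$\exists\,\eta,\ \mathbbm{P}(\omega^s=\eta^s)=1$" into the countable universal statement "$\forall B\in\Pi_s,\ \mathbbm{P}(B)\in\{0,1\}$" hinges on combining the $\pi$-$\lambda$ reduction with the separable-metric-space concentration argument that produces the unique atom $\eta^s\in\Omega^s$ a posteriori. Once both $\mathcal{A}_s^{(1)}$ and $\mathcal{A}_s^{(2)}$ are shown to be Borel, $\mathcal{A}_s$ is Borel as their intersection and the proof is complete.
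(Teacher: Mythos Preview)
Your proof is correct and follows essentially the same approach as the paper: the paper isolates your two conditions as a separate Lemma (Lemma~\ref{LemmaLlocbound}), characterizing $\mathcal{A}_s$ by $\mathbbm{P}(F)\in\{0,1\}$ for all $F\in\Pi_s$ together with the countable family of martingale identities over rational times and the $\pi$-systems $\Pi_t$, and then concludes Borel measurability exactly as you do. Your treatment is in fact slightly more explicit on one point the paper glosses over, namely why $\mathcal{F}^o_s$ being $\mathbbm{P}$-trivial forces the existence of a single $\eta^s\in\Omega^s$ carrying full mass.
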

For the proof of this proposition we need a technical lemma.

\begin{lemma}\label{LemmaLlocbound}
	We fix $s\in\mathbbm{R}_+$. An element $\mathbbm{P}$ of $\mathcal{P}(\Omega)$ belongs to $\mathcal{A}_s$ if and only if it verifies the following conditions:
	\begin{enumerate}
		\item $\mathbbm{P}(F)\in\{0,1\}$ for all $F\in\Pi_s$;
		\item $\mathbbm{E}^{\mathbbm{P}}[(M_u-M_t)\mathds{1}_F]=0$ for all $M\in\chi$, $t,u\in[s,+\infty[\cap\mathbbm{Q}$ such that $t\leq u$, $F\in\Pi_t$.
	\end{enumerate}	
\end{lemma}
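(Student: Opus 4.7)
The plan is to prove each implication separately. \emph{Forward implication.} Suppose $\mathbbm{P} \in MP^{s,\eta}(\chi)$ for some $\eta \in \Omega$. Then $\mathbbm{P}(\omega^s = \eta^s) = 1$ makes $\mathcal{F}^o_s$ $\mathbbm{P}$-trivial (as in Corollary \ref{CoroTrivial}), which gives condition 1 since $\Pi_s \subset \mathcal{F}^o_s$; condition 2 is an immediate consequence of the $(\mathbbm{P}, \mathbbm{F}^o)$-martingale property of each $M \in \chi$ on $[s, +\infty[$ together with the inclusion $\Pi_t \subset \mathcal{F}^o_t$.

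\emph{Backward implication, construction of $\eta$.} Suppose conditions 1 and 2 hold. A Dynkin $\pi$-$\lambda$ argument upgrades condition 1: the collection $\{F \in \mathcal{F}^o_s : \mathbbm{P}(F) \in \{0,1\}\}$ is easily seen to be a $\lambda$-system containing the countable $\pi$-system $\Pi_s$, so it equals $\sigma(\Pi_s) = \mathcal{F}^o_s$ and $\mathcal{F}^o_s$ is entirely $\mathbbm{P}$-trivial. To produce a starting path, I push $\mathbbm{P}$ forward along the $\mathcal{F}^o_s$-measurable map $\pi_s : \omega \mapsto \omega^s$ to obtain a probability $\nu$ on the Polish space $\Omega^s$, whose Borel $\sigma$-field equals $\Omega^s \cap \mathcal{F}^o_s$ by Proposition \ref{BorelLambda}. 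Triviality forces $\nu$ to take only the values $0$ and $1$, and the classical fact that a $\{0,1\}$-valued probability measure on a standard Borel space is a Dirac (which one proves by transporting $\nu$ through a Borel isomorphism onto a Borel subset of $[0,1]$ and observing that the resulting CDF is $\{0,1\}$-valued, non-decreasing and right-continuous, hence jumps from $0$ to $1$ at a single point) yields some $\eta \in \Omega^s$ with $\nu = \delta_\eta$, i.e., $\mathbbm{P}(\omega^s = \eta^s) = 1$. This Dirac-extraction step is the main conceptual hurdle of the proof.

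\emph{Martingale property.} It remains to verify that each $M \in \chi$ is a $(\mathbbm{P}, \mathbbm{F}^o)$-martingale on $[s, +\infty[$. Fix $M \in \chi$ and rationals $t, u$ with $s \leq t \leq u$; then $M_u - M_t$ is bounded (by uniform boundedness of $M$ on $[0, u]$), so $F \mapsto \mathbbm{E}^{\mathbbm{P}}[(M_u - M_t)\mathds{1}_F]$ is a finite signed measure on $\mathcal{F}^o_t$ vanishing on $\Pi_t$ by condition 2. Applying condition 2 to a sequence $F_n = \{\omega(0) \in B(x_0, n)\} \in \Pi_t$ exhausting $\Omega$ and using dominated convergence yields $\mathbbm{E}^{\mathbbm{P}}[M_u - M_t] = 0$; the $\pi$-$\lambda$ theorem applied to the positive and negative parts (now known to have equal total mass) then extends the vanishing to all of $\sigma(\Pi_t) = \mathcal{F}^o_t$. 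Finally, for arbitrary $s \leq t \leq u$ I take rationals $t_n \downarrow t$ and $u_n \downarrow u$ with $s \leq t_n \leq u_n$; every bounded $\mathcal{F}^o_t$-measurable $Y$ is also $\mathcal{F}^o_{t_n}$-measurable, so $\mathbbm{E}^{\mathbbm{P}}[(M_{u_n} - M_{t_n}) Y] = 0$ for every $n$, and dominated convergence combined with the cadlag property of $M$ yields $\mathbbm{E}^{\mathbbm{P}}[(M_u - M_t) Y] = 0$, as desired.
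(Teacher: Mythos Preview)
Your proof is correct and follows essentially the same route as the paper's: both directions use Dynkin's $\pi$-$\lambda$ lemma to upgrade condition~1 to full $\mathbbm{P}$-triviality of $\mathcal{F}^o_s$, and both combine right-continuity with dominated convergence and another $\pi$-$\lambda$ argument to promote condition~2 to the full martingale property. Two minor remarks. First, the paper simply asserts the equivalence between $\mathcal{F}^o_s$-triviality and the existence of $\eta$ with $\mathbbm{P}(\omega^s=\eta^s)=1$; your Dirac-extraction argument via the pushforward onto the Polish space $\Omega^s$ fills in this step cleanly and is a genuine addition. Second, in the martingale part the paper reverses your order (it first passes from rational to real $t,u$ while keeping $G\in\Pi_t$, then extends to $\mathcal{F}^o_t$), and it applies the $\lambda$-system argument directly to $\{F:\mathbbm{E}^{\mathbbm{P}}[(M_u-M_t)\mathds{1}_F]=0\}$ rather than going through the positive and negative parts of the signed measure; since $\Omega\in\Pi_t$ (empty intersection, $N=0$), your exhaustion by balls is not actually needed.
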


\begin{proof}
	By definition of $\mathcal{A}_s$, an element $\mathbbm{P}$ of $\mathcal{P}(\Omega)$ belongs to $\mathcal{A}_s$ iff
	\begin{description}
		\item{a)} there exists $\eta\in\Omega$ such that $\mathbbm{P}(\omega^s=\eta^s)=1$;
		\item{b)} for all $M\in\chi$, $(M_t)_{t\in[s,+\infty[}$ is a $(\mathbbm{P},\mathbbm{F}^o)$-martingale.
	\end{description}

	Item a) above is equivalent to saying that $\mathcal{F}^o_s$ is $\mathbbm{P}$-trivial which is equivalent to item 1. of the Lemma's statement
	by Dynkin's Lemma (see 4.11 in \cite{aliprantis}), since $\Pi_s$ is a $\pi$-system generating $\mathcal{F}^o_s$ and since the sets  $F\in\mathcal{F}^o_s$ such that $\mathbbm{P}(F)\in\{0,1\}$ form a $\lambda$-system. 
	\\
	On the other hand, it is clear that item b) above implies item 2. in the statement of the Lemma. Conversely,  assume that $M\in \chi$ satisfies item 2. of the statement. We fix $s\leq t\leq u$.
	Let $(t_n)_n,(u_n)_n$ be two sequences of rational numbers which
	converge  to respectively to $t,u$ strictly from the right  
	and such that $t_n\leq u_n$ for all $n$. For every fixed $n$, we have $\mathbbm{E}^{\mathbbm{P}}[(M_{u_n}-M_{t_n})\mathds{1}_G]=0$ 
	for all $G\in \Pi_t$. 
	We then pass to the limit in $n$ using the fact that $M$ is right-continuous at fixed $\omega$, and the dominated convergence theorem and taking into account
	the fact that $M$ is 
	bounded on compact intervals; this yields $\mathbbm{E}^{\mathbbm{P}}[(M_u-M_t)\mathds{1}_G]=0$ for all $G\in \Pi_t$. Since sets $G\in\mathcal{F}^o_t$ verifying this property form a $\lambda$-system and since $\Pi_t$ is a $\pi$-system generating $\mathcal{F}^o_t$, then by Dynkin's lemma (see 4.11 in \cite{aliprantis}), $\mathbbm{E}^{\mathbbm{P}}[(M_u-M_t)\mathds{1}_G]=0$ for all $G\in \mathcal{F}^o_t$.
	This implies that   $(M_t)_{t\in[s,+\infty[}$ is a $(\mathbbm{P},\mathbbm{F}^o)$-martingale
	which concludes the proof of Lemma \ref{LemmaLlocbound}.
\end{proof}

\begin{prooff}. of Proposition \ref{Llocbound}.
	\\
	We fix $s\in\mathbbm{R}_+$. We recall that for any bounded random variable 
	$\phi$, $\mathbbm{P}\mapsto\mathbbm{E}^{\mathbbm{P}}[\phi]$ is 
	Borel.
	In particular for all $F\in\Pi_s$, $\mathbbm{P}\longmapsto \mathbbm{P}(F)$ and for all $M\in\chi$, $t,u\in[s,+\infty[\cap\mathbbm{Q}$, $F\in\Pi_t$, $\mathbbm{P}\longmapsto\mathbbm{E}^{\mathbbm{P}}[(M_u-M_t)\mathds{1}_F]$ are Borel maps. The result follows
	by Lemma \ref{LemmaLlocbound}, taking into account the fact 
	$\Pi_t$ is countable for any $t$, and $\chi$ and the rational 
	number set $\mathbbm{Q}$ are also countable. Indeed since $\{0\}$ and $\{0,1\}$ are Borel sets,  $\mathcal{A}_s$ is Borel being a countable intersection of preimages of Borel sets by Borel functions.
\end{prooff}
\begin{proposition} \label{P210}
	Let $\chi$ be a countable set of cadlag
	$\mathbbm{F}^o$-adapted processes
	which are uniformly bounded on each interval $[0,T]$.
	We assume that the martingale problem associated to $\chi$ is well-posed, see Definition \ref{MP}.
	Let $s\in\mathbbm{R}_+$. Then
	$
	\Phi_s:\left(\begin{array}{ccc}
		\eta&\longmapsto&\mathbbm{P}^{s,\eta}\\
	\Omega^s &\longrightarrow & \mathcal{P}(\Omega)
	\end{array}\right)$ is Borel.
	Moreover,  $\left(\begin{array}{ccc}
	(s,\eta)&\longmapsto&\mathbbm{P}^{s,\eta}\\
	\mathbbm{R}_+\times \Omega &\longrightarrow & \mathcal{P}(\Omega)
	\end{array}\right)$ is $\mathbbm{F}^o$-adapted.
\end{proposition}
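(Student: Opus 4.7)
The plan is to fix $s\in\mathbbm{R}_+$, identify the graph of $\Phi_s$ inside $\Omega^s\times\mathcal{P}(\Omega)$, prove that this graph is Borel, and then invoke the classical fact, due to Kuratowski, that a function between standard Borel spaces with Borel graph is itself Borel (equivalently, by the Lusin-Suslin theorem, the natural Borel bijection $\mathrm{graph}(\Phi_s)\to\Omega^s$ is a Borel isomorphism, and $\Phi_s$ is recovered as the composition of its inverse with the second projection). The $\mathbbm{F}^o$-adaptedness of $(s,\eta)\mapsto\mathbbm{P}^{s,\eta}$ will then be an immediate corollary, since by the extension convention in Definition \ref{MP} we have $\mathbbm{P}^{s,\eta}=\Phi_s(\eta^s)$ and the stopping map $\eta\mapsto\eta^s$ from $\Omega$ to $\Omega^s$ is $\mathcal{F}^o_s/\mathcal{B}(\Omega^s)$-measurable thanks to Proposition \ref{BorelLambda}.

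By well-posedness, the graph of $\Phi_s$ coincides with
$$
G_s := \{(\eta,\mathbbm{P})\in\Omega^s\times\mathcal{P}(\Omega): \mathbbm{P}\in MP^{s,\eta}(\chi)\}.
$$
The key step is to prove that $G_s$ is Borel, via the following two-parameter refinement of Lemma \ref{LemmaLlocbound}: $(\eta,\mathbbm{P})\in G_s$ if and only if (i) $\mathbbm{P}(F)=\mathds{1}_F(\eta)$ for every $F\in\Pi_s$, and (ii) $\mathbbm{E}^{\mathbbm{P}}[(M_u-M_t)\mathds{1}_F]=0$ for every $M\in\chi$, every pair $t\leq u$ in $[s,+\infty[\cap\mathbbm{Q}$ and every $F\in\Pi_t$. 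Part (ii) is exactly the martingale condition already treated in Lemma \ref{LemmaLlocbound}. Part (i) is equivalent to the initial-path constraint $\mathbbm{P}(\omega^s=\eta^s)=1$: indeed, if (i) holds then the measures $\mathbbm{P}\vert_{\mathcal{F}^o_s}$ and the Dirac mass $\delta_\eta\vert_{\mathcal{F}^o_s}$ agree on the countable $\pi$-system $\Pi_s$ generating $\mathcal{F}^o_s$, hence they coincide on $\mathcal{F}^o_s$; evaluating at the set $\{\omega^s=\eta^s\}\in\mathcal{F}^o_s$ (which contains $\eta$ since $\eta\in\Omega^s$) yields $\mathbbm{P}(\omega^s=\eta^s)=1$. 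The converse is immediate, as every $F\in\mathcal{F}^o_s$ satisfies $\mathds{1}_F(\omega)=\mathds{1}_F(\omega^s)$.

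Each of the countably many constraints in (i) and (ii) defines a Borel subset of $\Omega^s\times\mathcal{P}(\Omega)$: the maps $\mathbbm{P}\mapsto\mathbbm{P}(F)$ and $\mathbbm{P}\mapsto\mathbbm{E}^{\mathbbm{P}}[(M_u-M_t)\mathds{1}_F]$ are Borel on $\mathcal{P}(\Omega)$ by the argument already used in the proof of Proposition \ref{Llocbound}, while $\eta\mapsto\mathds{1}_F(\eta)$ is clearly Borel on $\Omega^s$. Since $\chi$, $\Pi_s$, the family $(\Pi_t)_{t\in\mathbbm{Q}}$ and $\mathbbm{Q}$ are all countable, $G_s$ is a countable intersection of Borel sets, hence Borel. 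The Kuratowski/Lusin-Suslin argument recalled in the first paragraph then delivers the Borel measurability of $\Phi_s$, and composing with $\eta\mapsto\eta^s$ produces the $\mathbbm{F}^o$-adaptedness of $(s,\eta)\mapsto\mathbbm{P}^{s,\eta}$. The main obstacle is the Borel character of $G_s$, and more precisely the treatment of the $\eta$-dependence; once the $\pi$-system reformulation (i) is in place, the remainder reduces to countability of the defining families and to the standard descriptive set theoretic result on functions with Borel graph.
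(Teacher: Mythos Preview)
Your proof is correct and close in spirit to the paper's, both ultimately relying on a Kuratowski/Lusin--Suslin type result and on the countable $\pi$-system $\Pi_s$ to reduce to countably many Borel constraints. The organization differs: the paper first shows that the image $\mathcal{A}_s=\Phi_s(\Omega^s)$ is Borel (this is Proposition~\ref{Llocbound}), then explicitly verifies that the inverse $\Phi_s^{-1}:\mathcal{A}_s\to\Omega^s$ is Borel by checking images of generating sets of $\mathcal{B}(\Omega^s)$, and invokes Kuratowski's theorem on that inverse. You instead work with the graph $G_s\subset\Omega^s\times\mathcal{P}(\Omega)$ directly, sharpening condition~1 of Lemma~\ref{LemmaLlocbound} from $\mathbbm{P}(F)\in\{0,1\}$ to the two-variable constraint $\mathbbm{P}(F)=\mathds{1}_F(\eta)$, and then appeal to the Borel-graph theorem. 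Your route is slightly more economical---you never need to isolate $\mathcal{A}_s$ or treat $\Phi_s^{-1}$ separately---while the paper's explicit handling of the inverse makes the role of well-posedness (injectivity of $\Phi_s$) more visible.
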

\begin{proof}
	We fix $s\in\mathbbm{R}_+$ and set 
	\begin{equation}
	\Phi_s:\begin{array}{ccl}
	\eta&\longmapsto& \mathbbm{P}^{s,\eta}\\
	\Omega^s&\longrightarrow&\mathcal{A}_s, 
	\end{array}
	\end{equation}
where $\mathcal{A}_s$ is defined as in Proposition \ref{Llocbound}. $\Phi_s$ is surjective by construction. It is also injective. Indeed, if $\eta_1,\eta_2\in\Omega^s$ are different, there exists $t\in[0,s]$ such that $\eta_1(t)\neq \eta_2(t)$ and we have  $\mathbbm{P}^{s,\eta_1}(\omega(t)=\eta_1(t))=1$ and $\mathbbm{P}^{s,\eta_2}(\omega(t)=\eta_2(t))=1$ so clearly $\mathbbm{P}^{s,\eta_1}\neq\mathbbm{P}^{s,\eta_2}$.
	
	We can therefore introduce the reciprocal mapping 
	\begin{equation}
	\Phi^{-1}_s:\begin{array}{ccl}
	\mathbbm{P}^{s,\eta}&\longmapsto& \eta\\
	\mathcal{A}_s&\longrightarrow&\Omega^s ,
	\end{array}
	\end{equation}
	which is a bijection. We wish to show  that it is Borel. Since the Borel $\sigma$-algebra of $\Omega^s$ is generated by the sets of type $\{\omega(r\wedge s)\in A\}$ where $r\in\mathbbm{R}_+$ and $A\in\mathcal{B}(E)$, it is enough to show that 
	$\Phi_s(\{\omega(r\wedge s)\in A\})$ is for any $r,A$ a Borel subset of $\mathcal{P}(\Omega)$. 
	We then have $\Phi_s(\{\omega(r\wedge s)\in A\})=\mathcal{A}_s\cap\{\mathbbm{P}:\mathbbm{P}(\omega(r\wedge s)\in A)=1\}$ which is Borel being
	the intersection of $\mathcal{A}_s$ which is Borel by Lemma 
	\ref{LemmaLlocbound}, and of the preimage of $\{1\}$ by the Borel function $\mathbbm{P}\mapsto \mathbbm{P}(F)$ with $F=\{\omega(r\wedge s)\in A\}$.
	So $\Phi^{-1}_s$ is a Borel bijection which maps the Borel 
	set $\mathcal{A}_s$ of the Polish space $\mathcal{P}(\Omega)$ into the Polish space $\Omega^s$. By Kuratowsky theorem (see Corollary 3.3 in \cite{parthasarathy}), $\Phi_s:\begin{array}{ccl}
	\eta&\longmapsto&\mathbbm{P}^{s,\eta}\\
	\Omega^s&\longrightarrow&\mathcal{P}(\Omega)
	\end{array}$ is Borel. 

	Let us justify the second part of the statement.  Since by Proposition \ref{BorelLambda}, $\mathcal{B}(\Omega^s)=\Omega^s\cap\mathcal{F}^o_s$ for all $s$, it is clear that $\left(\begin{array}{ccl}
	\eta&\longmapsto&\eta^s\\
	\Omega&\longrightarrow&\Omega^s 
	\end{array}\right)$ is $(\mathcal{F}^o_s,\mathcal{B}(\Omega^s))$-measurable and therefore  that $\left(\begin{array}{ccc}
	\eta&\longmapsto&\mathbbm{P}^{s,\eta}\\
	\Omega &\longrightarrow & \mathcal{P}(\Omega)
	\end{array}\right)$ is $\mathcal{F}^o_s$-measurable.
	
\end{proof}

\begin{proposition}\label{MPimpliesCond}
	Let $\chi$ be a countable set of 
	cadlag $\mathbbm{F}^o$-adapted processes which are uniformly bounded on each interval $[0,T]$, and assume that the martingale problem associated to $\chi$ is well-posed, see Definition \ref{MP}.
	Then $(\mathbbm{P}^{s,\eta})_{(s,\eta)\in\mathbbm{R}_+\times \Omega}$ is a  path-dependent canonical class verifying Hypothesis \ref{HypClass} .
	
\end{proposition}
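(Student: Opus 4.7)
The plan is to verify each of the three items of Definition \ref{DefCondSyst} together with the reinforcement given by Hypothesis \ref{HypClass}. Since $\mathcal{F}^o_t \subset \mathcal{F}_t$, Hypothesis \ref{HypClass} automatically implies item 3 of Definition \ref{DefCondSyst}, so the effort concentrates on items 1, 2 and on Hypothesis \ref{HypClass}. Item 1 is immediate from the very definition of a solution of the martingale problem. For item 2, Proposition \ref{P210} shows that $\eta \mapsto \mathbbm{P}^{s,\eta}$ is Borel on $\Omega^s$, and the extension in Definition \ref{MP} to $\Omega$ is done via the $\mathcal{F}^o_s$-measurable map $\eta \mapsto \eta^s$; composing with the Borel evaluation $\mathbbm{P} \mapsto \mathbbm{P}(F)$ on $\mathcal{P}(\Omega)$ yields the claimed $\mathcal{F}^o_s$-measurability.

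The heart of the proof is Hypothesis \ref{HypClass}, for which I follow the classical regular-conditional-probability strategy. Fix $(s,\eta) \in \Lambda$ and $t \geq s$. Since $(\Omega,\mathcal{F})$ is Polish with its Borel $\sigma$-field, there exists a regular conditional probability $(\omega \mapsto Q_\omega)$ of $\mathbbm{P}^{s,\eta}$ given $\mathcal{F}_t$. I will show that for $\mathbbm{P}^{s,\eta}$-almost every $\omega$, $Q_\omega$ belongs to $MP^{t,\omega}(\chi)$; well-posedness will then force $Q_\omega = \mathbbm{P}^{t,\omega}$ almost surely, which is exactly \eqref{DE14}.

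To verify the two clauses of Definition \ref{D44} for $Q_\omega$: first, since $\{\zeta : \zeta^t = \omega^t\} \in \mathcal{F}_t$, the defining property of $(Q_\omega)$ yields $Q_\omega(\zeta^t = \omega^t) = 1$ for $\mathbbm{P}^{s,\eta}$-a.a.\ $\omega$. Second, fix $M \in \chi$; by Remark \ref{cadlag}, $M$ is on $[s,+\infty[$ a $(\mathbbm{P}^{s,\eta},\mathbbm{F})$-martingale. For any rationals $t \leq u \leq v$ and any $F \in \Pi_u$ and $G \in \mathcal{F}_t$, the set $G \cap F$ lies in $\mathcal{F}_u$, so $\mathbbm{E}^{\mathbbm{P}^{s,\eta}}[(M_v - M_u)\mathds{1}_{G \cap F}] = 0$; unraveling the definition of the kernel $(Q_\omega)$, this means that outside a $\mathbbm{P}^{s,\eta}$-null set $N_{M,u,v,F}$, one has $\mathbbm{E}^{Q_\omega}[(M_v - M_u)\mathds{1}_F] = 0$. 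Because $\chi$, the rationals, and each $\Pi_u$ (Notation \ref{PiSystem}) are countable, the union of these null sets remains negligible, and outside it the identity holds for all $M$, all such rationals, and all $F$. A Dynkin argument extending the identity from $\Pi_u$ to $\mathcal{F}^o_u$, combined with right-continuity of $M$ and dominated convergence (using uniform boundedness on compact intervals), upgrades it to the full $(Q_\omega,\mathbbm{F}^o)$-martingale property of $M$ on $[t,+\infty[$, exactly as in the proof of Lemma \ref{LemmaLlocbound}.

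Finally, for $(s,\eta) \notin \Lambda$, the convention $\mathbbm{P}^{s,\eta} := \mathbbm{P}^{s,\eta^s}$ of Definition \ref{MP} transfers all three properties verbatim, since $\eta \mapsto \eta^s$ is $\mathcal{F}^o_s$-measurable. The main obstacle I anticipate is the regularization step just described: passing from a family of null sets indexed by the test data $(M,u,v,F)$ to a single null set outside which the martingale identity holds uniformly. This is precisely why the countability assumptions on $\chi$ and the use of the countable generating $\pi$-system $\Pi_u$ are built into the statement.
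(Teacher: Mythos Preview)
Your proof follows essentially the same strategy as the paper's: verify items 1 and 2 via Proposition \ref{P210}, then establish Hypothesis \ref{HypClass} by taking a regular conditional probability $(Q_\omega)$ of $\mathbbm{P}^{s,\eta}$ given $\mathcal{F}_t$ and showing that $Q_\omega \in MP^{t,\omega}(\chi)$ for almost all $\omega$, whence well-posedness forces $Q_\omega = \mathbbm{P}^{t,\omega}$. The martingale verification via countability of $\chi$, the rationals, and $\Pi_u$, followed by Dynkin and right-continuity, matches the paper's argument closely.

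There is, however, one step that is too quick: your claim that ``since $\{\zeta : \zeta^t = \omega^t\} \in \mathcal{F}_t$, the defining property of $(Q_\omega)$ yields $Q_\omega(\zeta^t = \omega^t) = 1$ for $\mathbbm{P}^{s,\eta}$-a.a.\ $\omega$''. The defining property of a regular conditional probability gives $Q_\omega(F) = \mathds{1}_F(\omega)$ a.s.\ only for each \emph{fixed} $F \in \mathcal{F}_t$, whereas here the set $\{\zeta : \zeta^t = \omega^t\}$ varies with $\omega$. The standard atom-concentration result you seem to be invoking requires the conditioning $\sigma$-field to be countably generated, and it is not clear that $\mathcal{F}_t = \bigcap_{r>t}\mathcal{F}^o_r$ has this property. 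The paper handles this exactly as you do for the martingale part: it first obtains $Q_\omega(F) = \mathds{1}_F(\omega)$ for all $F$ in the countable $\pi$-system $\Pi_t$ outside a single null set, then extends via Dynkin's lemma to all of $\mathcal{F}^o_t$, which contains $\{\zeta^t = \omega^t\}$. Since you already deploy this technique later, the fix is immediate; but as written, this step is a gap.
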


\begin{proof} 
	The first two items of Definition \ref{DefCondSyst} are directly implied by Proposition \ref{P210} and the fact that $\mathbbm{P}^{s,\eta}\in MP^{s,\eta}(\chi)$ hence $\mathbbm{P}^{s,\eta}(\omega^s=\eta^s)$ for all $(s,\eta)$.
	It remains to show the validity of Hypothesis \ref{HypClass}. 
	
	We fix $(s,\eta)\in\mathbbm{R}_+\times \Omega$ and $t\geq s$. Since $\Omega$ is Polish and $\mathcal{F}_t$ is a sub $\sigma$-field of its Borel $\sigma$-field, there exists a regular conditional expectation of $\mathbbm{P}^{s,\eta}$ by $\mathcal{F}_t$ (see Theorem 1.1.6 in \cite{stroock}), meaning a set of probability measures $(\mathbbm{Q}^{t,\zeta})_{\zeta\in\Omega}$ 
	on $(\Omega,\mathcal{F})$  such that 
	\begin{enumerate}
		\item for any $F\in\mathcal{F}$, $\zeta\mapsto\mathbbm{Q}^{t,\zeta}(F)$ is $\mathcal{F}_t$-measurable;
		\item for any $F\in\mathcal{F}$, $\mathbbm{P}^{s,\eta}(F|\mathcal{F}_t)(\zeta)=\mathbbm{Q}^{t,\zeta}(F)$ $\mathbbm{P}^{s,\eta}$ a.s.
	\end{enumerate} 
	We will now show that for $\mathbbm{P}^{s,\eta}$ almost all $\zeta$, we have  
	\begin{equation} \label{EPQ}
	\mathbbm{Q}^{t,\zeta}=\mathbbm{P}^{t,\zeta},
	\end{equation}
	so that item 2. above will imply Hypothesis \ref{HypClass}. 
	In order to show that equality, we will show that for $\mathbbm{P}^{s,\eta}$ almost all $\zeta$, 
	$\mathbbm{Q}^{t,\zeta}$ solves the Martingale problem associated to $\chi$ 
	starting in $(t,\zeta)$ and conclude 
	\eqref{EPQ} since 
 $MP^{t,\zeta}$
 is a singleton,
	taking into account the fact the corresponding martingale problem 
	is well-posed.
	
	For any $F\in\mathcal{F}^o_t$, by item 2. above we have  $\mathbbm{Q}^{t,\zeta}(F)=\mathds{1}_F(\zeta)$ $\mathbbm{P}^{s,\eta}$ a.s. Since $\Pi_t$ is countable, there exists a $\mathbbm{P}^{s,\eta}$-null set $N_1$ such that for all
 $\zeta\in 	N_1^c$ we have $\mathbbm{Q}^{t,\zeta}(F)=\mathds{1}_F(\zeta)$ for all $F\in\Pi_t$. Then since $\Pi_t$ is a $\pi$-system generating $\mathcal{F}^o_t$ and since sets verifying the previous relation define a $\lambda$-system, we have by Dynkin's lemma (see 4.11 in \cite{aliprantis}) that for all $\zeta\in
	N_1^c$, $\mathbbm{Q}^{t,\zeta}(F)=\mathds{1}_F(\zeta)$ for all $F\in\mathcal{F}^o_t$. Now for every fixed $\zeta\in N_1^c$, since $\{\omega:\omega^t=\zeta^t\}\in \mathcal{F}^o_t$, we have $\mathbbm{Q}^{t,\zeta}(\omega^t=\zeta^t)=\mathds{1}_{\{\omega:\omega^t=\zeta^t\}}(\zeta)=1$, which is the first item of 
Definition \ref{D44} related to $MP^{t,\zeta}(\chi)$.
	
	We then show that for  $\mathbbm{P}^{s,\eta}$-almost all $\zeta$, the elements of $\chi$ are $(\mathbbm{Q}^{t,\zeta},\mathbbm{F}^o)$-martingales, which constitutes the second item of Definition \ref{D44}. 
	\\
	For any $t_1\leq t_2$ in $[t,+\infty[$, $M\in\chi$ and $F\in\mathcal{F}^o_{t_1}$, we have 
\begin{equation}
\begin{array}{rcl}
\mathbbm{E}^{\mathbbm{Q}^{t,\zeta}}[(M_{t_2}-M_{t_1})\mathds{1}_F]&=&\mathbbm{E}^{s,\eta}[(M_{t_2}-M_{t_1})\mathds{1}_F|\mathcal{F}_{t}](\zeta)\\
&=&\mathbbm{E}^{s,\eta}[\mathbbm{E}^{s,\eta}[(M_{t_2}-M_{t_1})\mathds{1}_F|\mathcal{F}_{t_1}]|\mathcal{F}_{t}](\zeta)\\
&=&\mathbbm{E}^{s,\eta}[\mathbbm{E}^{s,\eta}[(M_{t_2}-M_{t_1})|\mathcal{F}_{t_1}]\mathds{1}_F|\mathcal{F}_{t}](\zeta)\\
&=&0,
\end{array}
\end{equation}	
 for  $\mathbbm{P}^{s,\eta}$ almost all $\zeta$  by Remark \ref{cadlag}
since $M$ is a $(\mathbbm{P}^{s,\eta},\mathbbm{F})$-martingale on$[s,+\infty[$ and $F\in\mathcal{F}^o_{t_1}\subset\mathcal{F}_{t_1}$. 
	Since $\chi$ and the set of rational numbers are countable  and taking into account the fact that
	for any $r\geq 0$, $\mathcal{F}^o_r$ is countably generated, there exists a $\mathbbm{P}^{s,\eta}$-null set $N_2$ such that for any $\zeta\in N_2^c$, we have for any $t_1\leq t_2$
	in $[t,+\infty[\cap\mathbbm{Q}$, $M\in\chi$, $F\in\mathcal{F}^o_{t_1}$, that $\mathbbm{E}^{\mathbbm{Q}^{t,\zeta}}[(M_{t_2}-M_{t_1})\mathds{1}_F]=0$.
	
	Let $\zeta \in N_2^c$. We will now show that this still holds for any  $t_1\leq t_2$ in $[t,+\infty[$, $M\in\chi$, $F\in\mathcal{F}^o_{t_1}$. 
	We consider rational valued sequences $(t_1^n)_n$ (resp. $(t_2^n)_n$) which converge to $t_1$ (resp. to $t_2$) strictly from the right and such that $t_1^n\leq t_2^n$ for all $n$. For all $n$, $\mathbbm{E}^{\mathbbm{Q}^{t,\zeta}}[(M_{t_2^n}-M_{t_1^n})\mathds{1}_F]=0$; since $M$ is right-continuous and bounded on finite intervals, by dominated convergence, 
	we can pass to the limit in $n$ and we obtain
	$\mathbbm{E}^{\mathbbm{Q}^{t,\zeta}}[(M_{t_2}-M_{t_1})\mathds{1}_F]=0$. 
	Therefore if 
	$\zeta\notin N_1\bigcup N_2$
	 which is $\mathbbm{P}^{s,\eta}$-negligible, then $\mathbbm{Q}^{t,\zeta}(\omega^t=\zeta^t)=1$ and  all the elements of $\chi$ are
	$(\mathbbm{Q}^{t,\zeta},\mathbbm{F}^o)$-martingales.
	This means that $\mathbbm{Q}^{t,\zeta}=\mathbbm{P}^{t,\zeta}$ by well-posedness and concludes the proof of Proposition \ref{MPimpliesCond}.
\end{proof}

\subsection{Martingale problem associated to an operator and weak
 generators}\label{S3_3}

This section links the notion of martingale problem with respect
to a natural notion of (weak) generator.  
In this section Notation \ref{canonicalspace} will be again in force. Let $(\mathbbm{P}^{s,\eta})_{(s,\eta)\in\mathbbm{R}_+\times \Omega}$ be a path-dependent canonical class and the corresponding path-dependent system of projectors  $(P_s)_{s\in\mathbbm{R}_+}$, see Definition \ref{ProbaOp}.   Let $V:\mathbbm{R}_+\longmapsto\mathbbm{R}_+$ be a non-decreasing cadlag function.

In the sequel of this section, we are given a couple $(\mathcal{D}(A),A)$ verifying the following.
\begin{hypothesis}\label{HypDA}
\begin{enumerate}\
\item $\mathcal{D}(A)$ is a linear subspace of the space of    $\mathbbm{F}^o$-progressively measurable processes;
\item $A$ is a linear mapping from $\mathcal{D}(A)$ into the space of  $\mathbbm{F}^o$-progressively measurable processes;
\item for all $\Phi\in\mathcal{D}(A)$, $\omega\in\Omega$, $t\geq 0$, $\int_0^t|A\Phi_r(\omega)|dV_r<+\infty$;
\item for all $\Phi\in\mathcal{D}(A)$, $(s,\eta)\in\mathbbm{R}_+\times \Omega$ and 
	$t\in[s,+\infty[$, we have  $\mathbbm{E}^{s,\eta}\left[\int_{s}^{t}|A(\Phi)_r|dV_r\right]<+\infty$ and $\mathbbm{E}^{s,\eta}[|\Phi_t|]<+\infty$.
\end{enumerate}
\end{hypothesis}

Inspired from the classical literature (see 13.28 in \cite{jacod}) we introduce a notion of weak  generator.
\begin{definition}\label{WeakGen}
	We say that $(\mathcal{D}(A),A)$ is a \textbf{weak generator} of a path-dependent system of projectors $(P_s)_{s\in\mathbbm{R}_+}$ if for all $\Phi\in\mathcal{D}(A)$, $(s,\eta)\in\mathbbm{R}_+\times \Omega$ and 
	$t\in[s,+\infty[$, we have
	\begin{equation}
	P_s[\Phi_t](\eta)=\Phi_s(\eta)+\int_s^tP_s[A(\Phi)_r](\eta)dV_r.
	\end{equation}
\end{definition}

\begin{definition}\label{MPop}
	We will call \textbf{martingale problem associated to } $(\mathcal{D}(A),A)$ 
	the martingale problem (in the sense of  Definition \ref{D44}) associated to
	the set of processes $\chi$ constituted by the processes 
	$\Phi-\int_0^{\cdot}A(\Phi)_rdV_r$, $\Phi\in\mathcal{D}(A)$. 
	It will be said to be  \textbf{well-posed} if it is well-posed in the sense of Definition \ref{MP}.
\end{definition}

\begin{proposition}\label{MPopWellPosed}
 $(\mathcal{D}(A),A)$ is a weak generator of  $(P_s)_{s\in\mathbbm{R}_+}$ iff $(\mathbbm{P}^{s,\eta})_{(s,\eta)\in\mathbbm{R}_+\times \Omega}$ solves the martingale problem associated to $(\mathcal{D}(A),A)$.
	
	Moreover, if $(\mathbbm{P}^{s,\eta})_{(s,\eta)\in\mathbbm{R}_+\times \Omega}$ solves the well-posed martingale problem associated to  $(\mathcal{D}(A),A)$ then $(P_s)_{s\in\mathbbm{R}_+}$ is the unique path-dependent system of projectors for which $(\mathcal{D}(A),A)$ is a weak generator.
\end{proposition}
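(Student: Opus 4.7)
The plan is to recast the weak generator identity as an expectation statement under $\mathbbm{P}^{s,\eta}$ and then use the path-dependent canonical class property \eqref{DE13} to pass between integrated and conditional formulations. Uniqueness then drops out from the bijection of Corollary \ref{EqProbaOp} combined with well-posedness.

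To set up the calculation, I would first observe that any $\Phi \in \mathcal{D}(A)$ is $\mathbbm{F}^o$-progressively measurable, so $\Phi_s$ and $\int_0^s A(\Phi)_r dV_r$ are $\mathcal{F}^o_s$-measurable. By item 1 of Definition \ref{DefCondSyst}, they are therefore $\mathbbm{P}^{s,\eta}$-a.s. equal to the constants $\Phi_s(\eta)$ and $\int_0^s A(\Phi)_r(\eta) dV_r$. Combined with Fubini's theorem (applicable by item 4 of Hypothesis \ref{HypDA}), the weak generator identity $P_s[\Phi_t](\eta) = \Phi_s(\eta) + \int_s^t P_s[A(\Phi)_r](\eta) dV_r$ is equivalent to $\mathbbm{E}^{s,\eta}[M^\Phi_t] = \mathbbm{E}^{s,\eta}[M^\Phi_s]$, where $M^\Phi := \Phi - \int_0^\cdot A(\Phi)_r dV_r$.

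The forward direction (martingale problem implies weak generator) is then immediate: taking expectations in the $(\mathbbm{P}^{s,\eta},\mathbbm{F}^o)$-martingale property of $M^\Phi$ on $[s,+\infty[$ produces exactly this equality. For the converse, I would fix $s \leq t \leq u$ and apply \eqref{DE13} to compute $\mathbbm{E}^{s,\eta}[M^\Phi_u | \mathcal{F}^o_t](\omega) = \mathbbm{E}^{t,\omega}[M^\Phi_u]$ for $\mathbbm{P}^{s,\eta}$-a.a. $\omega$. Splitting $\int_0^u A(\Phi)_r dV_r$ at $t$, recognizing $\int_0^t A(\Phi)_r dV_r$ as $\mathcal{F}^o_t$-measurable (hence deterministic under $\mathbbm{P}^{t,\omega}$), and invoking the weak generator identity at $(t,\omega)$, the $\int_t^u$-terms cancel via Fubini and leave $\Phi_t(\omega) - \int_0^t A(\Phi)_r(\omega) dV_r = M^\Phi_t(\omega)$. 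Combined with item 1 of Definition \ref{DefCondSyst}, this yields the martingale problem.

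For the uniqueness claim, let $(\tilde{P}_s)$ be any path-dependent system of projectors of which $(\mathcal{D}(A),A)$ is a weak generator. By Corollary \ref{EqProbaOp} it corresponds to a path-dependent canonical class $(\tilde{\mathbbm{P}}^{s,\eta})$, and the forward direction just proved implies that each $\tilde{\mathbbm{P}}^{s,\eta}$ solves the martingale problem associated to $(\mathcal{D}(A),A)$. Well-posedness forces $\tilde{\mathbbm{P}}^{s,\eta} = \mathbbm{P}^{s,\eta}$ on $\Lambda$, and since both canonical classes depend on $\eta$ only through $\eta^s$ by item 2 of Definition \ref{DefCondSyst}, the equality extends to all of $\mathbbm{R}_+ \times \Omega$. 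The bijection of Corollary \ref{EqProbaOp} then yields $\tilde{P}_s = P_s$ for every $s$. The main subtlety I expect is justifying the Fubini exchanges and the identification of $\mathcal{F}^o_t$-measurable integrals as deterministic under $\mathbbm{P}^{t,\omega}$; both are handled respectively by the integrability assumptions in Hypothesis \ref{HypDA} and by item 1 of Definition \ref{DefCondSyst}.
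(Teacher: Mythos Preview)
Your proposal is correct and follows essentially the same route as the paper: both directions hinge on the identification $P_s[\cdot](\eta)=\mathbbm{E}^{s,\eta}[\cdot]$, Fubini (justified by Hypothesis \ref{HypDA}), the fact that $\mathcal{F}^o_s$-measurable variables are $\mathbbm{P}^{s,\eta}$-a.s.\ constant, and the conditioning property \eqref{DE13}; the uniqueness part likewise goes through Corollary \ref{EqProbaOp} and well-posedness exactly as in the paper. One small slip: in the uniqueness paragraph you invoke ``the forward direction just proved'' to conclude that $(\tilde{\mathbbm{P}}^{s,\eta})$ solves the martingale problem, but what you labelled \emph{forward} is ``MP $\Rightarrow$ weak generator''; the implication you actually need here is the converse (weak generator $\Rightarrow$ MP), so adjust the reference accordingly.
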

\begin{proof}
We start assuming that $(\mathcal{D}(A),A)$ is a weak generator of $(P_s)_{s\in\mathbbm{R}_+}$.
Let $\Phi\in\mathcal{D}(A)$, $s\leq t\leq u$. $\mathbbm{P}^{s,\eta}$ a.s. 
	we have
	\begin{equation} \label{E436}
	\begin{array}{rcl}
	&&\mathbbm{E}^{s,\eta}[\Phi_u-\Phi_t-\int_t^u A(\Phi)_rdV_r|\mathcal{F}^o_t](\omega)\\
	&=&\mathbbm{E}^{t,\omega}[\Phi_u-\Phi_t-\int_t^u A(\Phi)_rdV_r]\\
	&=&P_t[\Phi_u](\omega)-\Phi_t(\omega)-\int_t^uP_t[A(\Phi_r)](\omega)dV_r\\
	&=&0,
	\end{array}
	\end{equation}
	where the first equality holds by Remark \ref{Borel},
 the second one by Fubini's theorem and the third one because
 $(\mathcal{D}(A),A)$ is assumed to be a weak generator of $(P_s)_{s\in\mathbbm{R}_+}$. By definition of path-dependent canonical class,
 we have $\mathbbm{P}^{s,\eta}(\omega^s=\eta^s)=1$. By \eqref{E436}, 
 for all $\Phi\in\mathcal{D}(A)$, $\Phi-\int_s^{\cdot} A(\Phi)_rdV_r$ is a $(\mathbbm{P}^{s,\eta},\mathbbm{F}^o)$-martingale, and therefore $\mathbbm{P}^{s,\eta}$ solves the martingale problem associated to $(\mathcal{D}(A),A)$ starting in $(s,\eta)$.

Conversely, let us assume that $(\mathbbm{P}^{s,\eta})_{(s,\eta)\in\mathbbm{R}_+\times \Omega}$ solves the martingale problem associated to $(\mathcal{D}(A),A)$.
%
%
Let $\Phi\in\mathcal{D}(A)$ and $(s,\eta)\in\mathbbm{R}_+\times \Omega$ be fixed. By Definitions \ref{MPop} and \ref{MP}, 
$M[\Phi]:=
\Phi-\int_{0}^{\cdot}A(\Phi)_rdV_r$, 
is a $(\mathbbm{P}^{s,\eta},\mathbbm{F}^o)$-martingale on $[s,+\infty[$.
 Moreover, since $\mathbbm{P}^{s,\eta}(\omega^s=\eta^s)=1$ and being $\Phi_s$ 
 $\mathcal{F}^o_s$-measurable, 
we obtain $\Phi_s=\Phi_s(\eta)$ $\mathbbm{P}^{s,\eta}$ a.s. Therefore, for any $t\geq s$,  $\Phi_{t}-\Phi_s(\eta)-\int_{s}^{t}A(\Phi)_rdV_r=M[\Phi]_t-M[\Phi]_s$ a.s.; so taking the $\mathbbm{P}^{s,\eta}$ expectation, by Fubini's Theorem and Definition \ref{ProbaOp} it yields
	\begin{equation}
		\begin{array}{rcl}
		&&P_s[\Phi_t](\eta)-\Phi_s(\eta)-\int_s^tP_s[A(\Phi)_r](\eta)dV_r\\
		&=& \mathbbm{E}^{s,\eta}\left[\Phi_{t}-\Phi_s(\eta)-\int_{s}^{t}A(\Phi)_rdV_r\right]\\
		&=& \mathbbm{E}^{s,\eta}\left[M[\Phi]_t-M[\Phi]_s\right]\\
		&=&0,
		\end{array}
	\end{equation}
	hence that $(\mathcal{D}(A),A)$ is a weak generator of $(P_s)_{s\in\mathbbm{R}_+}$.

Finally assume moreover that the martingale problem is well-posed and that $(\mathcal{D}(A),A)$ is a weak generator of another path-dependent system of projectors $(Q_s)_{s\in\mathbbm{R}_+}$ with associated path-dependent canonical class $(\mathbbm{Q}^{s,\eta})_{(s,\eta)\in\mathbbm{R}_+\times \Omega}$. Then by the first statement of the present proposition, $(\mathbbm{Q}^{s,\eta})_{(s,\eta)\in\mathbbm{R}_+\times \Omega}$ solves the martingale problem associated to  $(\mathcal{D}(A),A)$.
 Since that martingale problem is well-posed we have
 $(\mathbbm{Q}^{s,\eta})_{(s,\eta)\in\mathbbm{R}_+\times \Omega}=(\mathbbm{P}^{s,\eta})_{(s,\eta)\in\mathbbm{R}_+\times \Omega}$ and by Proposition \ref{EqProbaOp}, $(Q_s)_{s\in\mathbbm{R}_+}=(P_s)_{s\in\mathbbm{R}_+}$.
\end{proof}

\begin{remark}\label{OpGen}
When the conditions of  previous proposition are verified, one can therefore associate analytically  to $(\mathcal{D}(A),A)$ a unique path-dependent system of projectors $(P_s)_{s\in\mathbbm{R}_+}$ through Definition \ref{WeakGen}.
\end{remark}

Combining Proposition \ref{MPopWellPosed} and Lemma \ref{LemMAF} yields the following.
\begin{corollary}
Assume that $(\mathbbm{P}^{s,\eta})_{(s,\eta)\in\mathbbm{R}_+\times \Omega}$ is
 progressive and fulfills Hypothesis \ref{HypClass}. 
Suppose that $(\mathcal{D}(A), A)$ is a weak generator of 
$(P_s)_{s\in\mathbbm{R}_+}$. Let $\Phi\in\mathcal{D}(A)$,
 and fix $(s,\eta)$. Then 
 $\Phi-\int_0^{\cdot}A(\Phi)_rdV_r$ admits on $[s,+\infty[$ a
 $\mathbbm{P}^{s,\eta}$ version $M[\Phi]^{s,\eta}$ which is a 
$(\mathbbm{P}^{s,\eta},\mathbbm{F}^{s,\eta})$-cadlag martingale. 
In particular, the random field defined by $M[\Phi]_{t,u}(\omega):=\Phi_u(\omega)-\Phi_t(\omega)-\int_t^uA\Phi_r(\omega)dV_r$ defines a MAF with cadlag version 
$M[\Phi]^{s,\eta}$ under $\mathbbm{P}^{s,\eta}$.
\end{corollary}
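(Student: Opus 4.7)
The plan is to treat this as a direct synthesis of the two cited results, with the only real content being the verification that the hypotheses of Lemma \ref{LemMAF} apply to the process $M := \Phi - \int_0^{\cdot} A(\Phi)_r dV_r$.

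First I would set $M_t(\omega) := \Phi_t(\omega) - \int_0^t A(\Phi)_r(\omega)\, dV_r$ and check that $M$ is an $\mathbbm{F}^o$-adapted process. This is immediate: $\Phi$ is $\mathbbm{F}^o$-progressively measurable by Hypothesis \ref{HypDA}(1), so $\Phi_t$ is $\mathcal{F}^o_t$-measurable; similarly $A(\Phi)$ is $\mathbbm{F}^o$-progressively measurable, and the finiteness condition in Hypothesis \ref{HypDA}(3) ensures the integral $\int_0^t A(\Phi)_r\, dV_r$ is well-defined pathwise and $\mathcal{F}^o_t$-measurable. Next, by the direct (``only if'') implication of Proposition \ref{MPopWellPosed}, since $(\mathcal{D}(A),A)$ is a weak generator of $(P_s)_{s\in\mathbbm{R}_+}$, the associated canonical class $(\mathbbm{P}^{s,\eta})_{(s,\eta)\in\mathbbm{R}_+\times\Omega}$ solves the martingale problem associated to $(\mathcal{D}(A),A)$. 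By Definitions \ref{MPop} and \ref{D44}, this is exactly the statement that for every $(s,\eta)$, $M$ is on $[s,+\infty[$ a $(\mathbbm{P}^{s,\eta},\mathbbm{F}^o)$-martingale.

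Having verified the hypotheses, I would then invoke Lemma \ref{LemMAF} applied to this $M$. This directly yields, for each fixed $(s,\eta)$, a cadlag $(\mathbbm{P}^{s,\eta},\mathbbm{F}^{s,\eta})$-martingale $\widetilde{M}^{s,\eta}$ which is a $\mathbbm{P}^{s,\eta}$-version of $M_{\cdot\vee s} - M_s$ on $\mathbbm{R}_+$ and vanishes on $[0,s]$. To promote this into a cadlag $\mathbbm{P}^{s,\eta}$-version of $M$ itself on $[s,+\infty[$ (as the statement requires), I would use that $M_s = \Phi_s - \int_0^s A(\Phi)_r dV_r$ is $\mathcal{F}^o_s$-measurable, and since $\mathbbm{P}^{s,\eta}(\omega^s = \eta^s) = 1$, the random variable $M_s$ is $\mathbbm{P}^{s,\eta}$-a.s. equal to the constant $M_s(\eta)$. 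Setting $M[\Phi]^{s,\eta}_t := \widetilde{M}^{s,\eta}_t + M_s(\eta)$ for $t \geq s$ (and extending by $M_t$ on $[0,s]$ if desired) then gives a cadlag $(\mathbbm{P}^{s,\eta},\mathbbm{F}^{s,\eta})$-martingale that is $\mathbbm{P}^{s,\eta}$-indistinguishable from $M$ on $[s,+\infty[$.

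For the final ``in particular'' statement about the MAF, I would observe that the random field $M[\Phi]_{t,u}(\omega) = \Phi_u(\omega) - \Phi_t(\omega) - \int_t^u A(\Phi)_r(\omega) dV_r$ is exactly $M_u(\omega) - M_t(\omega)$ for $M$ as above. The MAF conclusion of Lemma \ref{LemMAF} applied to this $M$ delivers directly that $M_{t,u} = M_u - M_t$ defines a path-dependent MAF with cadlag version $\widetilde{M}^{s,\eta}$ (equivalently $M[\Phi]^{s,\eta}$, since they differ by a $\mathbbm{P}^{s,\eta}$-a.s. constant) under $\mathbbm{P}^{s,\eta}$. No step here presents a genuine obstacle: the content of the corollary is really just bookkeeping that combines the measurability/martingale property from Proposition \ref{MPopWellPosed} with the cadlag-modification machinery encapsulated in Lemma \ref{LemMAF}; the only mildly delicate point is the passage from the version of $M_{\cdot\vee s}-M_s$ to a version of $M$, which is handled by the triviality of $\mathcal{F}_s^o$ under $\mathbbm{P}^{s,\eta}$ (Corollary \ref{CoroTrivial}).
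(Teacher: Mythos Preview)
Your proposal is correct and follows exactly the approach the paper intends: the paper's own proof consists of the single sentence ``Combining Proposition \ref{MPopWellPosed} and Lemma \ref{LemMAF} yields the following,'' and your argument is precisely this combination spelled out in detail. The additional care you take in passing from the cadlag version of $M_{\cdot\vee s}-M_s$ to a cadlag version of $M$ on $[s,+\infty[$ via the $\mathbbm{P}^{s,\eta}$-triviality of $\mathcal{F}^o_s$ is a legitimate detail that the paper leaves implicit.
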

We insist on the fact that in previous corollary, $\Phi$ is not necessarily cadlag.
That result will be crucial in the companion paper \cite{paperPathDep}.

\section{Weak solutions of path-dependent SDEs}\label{SDE}

We will now focus on a more specific type of martingale problem which will be associated to a path-dependent Stochastic Differential Equation with jumps. In this section we will refer to notions of \cite{jacod} Chapters II, III, VI and \cite{jacod79} Chapter XIV.5.

We fix $m\in\mathbbm{N}^*$, $E=\mathbbm{R}^m$, the associated canonical space, see Definition \ref{canonicalspace}, and a finite positive measure $F$ on 
$\mathcal{B}(\mathbbm{R}^m)$ not charging $0$.

\begin{definition}\label{WeakSol}
	$(\tilde \Omega,\tilde{\mathcal{F}},\tilde{\mathbbm{F}},\tilde{\mathbbm{P}},W,p)$  will be called a \textbf{space of driving processes} if $(\tilde \Omega,\tilde{\mathcal{F}},\tilde{\mathbbm{F}},\tilde{\mathbbm{P}})$ 
  is a  stochastic basis fulfilling the usual conditions, $W$ is an $m$-dimensional Brownian motion and $p$ is a Poisson measure of intensity 
	$q(dt,dx):=dt\otimes F(dx)$, and $W,p$ are optional for the underlying filtration.
\end{definition}
We now fix the following objects defined on the canonical space.
\begin{itemize}
	\item $\beta$, an $\mathbbm{R}^m$-valued $\mathbbm{F}^o$-predictable process;
	\item $\sigma$, a $\mathbbm{M}_m(\mathbbm{R})$-valued $\mathbbm{F}^o$-predictable process;
	\item $w$, an $\mathbbm{R}^m$-valued  $\mathcal{P}re^o\otimes\mathcal{B}(\mathbbm{R}^m)$-measurable function on $\mathbbm{R}_+\times\Omega\times\mathbbm{R}^m$,
\end{itemize}
where $\mathbbm{M}_m(\mathbbm{R})$ denotes the set of real-valued square matrices of size $m$.

\begin{definition} \label{D213}
	Let $(s,\eta)\in\mathbbm{R}_+\times\Omega$.
	We call a \textbf{weak solution of the SDE with coefficients $\beta$, $\sigma$, $w$ and starting in $(s,\eta)$} any probability measure $\mathbbm{P}^{s,\eta}$ on $(\Omega,\mathcal{F})$ such that there exists a space of driving processes 	$(\tilde \Omega,\tilde{\mathcal{F}},\tilde{\mathbbm{F}},\tilde{\mathbbm{P}}
	,W,p)$, on it an $m$-dimensional   $\tilde{\mathbbm{F}}$-adapted cadlag process $\tilde X$ such that $\mathbbm{P}^{s,\eta}=\tilde{\mathbbm{P}}\circ \tilde X^{-1}$ and such that the following holds.
	
	Let $\tilde \beta:=\beta_{\cdot}(\tilde X(\cdot))$, $\tilde \sigma:=\sigma_{\cdot}(\tilde X(\cdot))$ and $\tilde w:=w(\cdot,\tilde X(\cdot),\cdot)$. 
We have the following. 
	\begin{itemize}
		\item for all $t\in[0,s]$, $\tilde X_t=\eta(t)$ $\tilde{\mathbbm{P}}$ a.s.;
		\item $\int_s^{\cdot}\left(\|\tilde{\beta}_r\|+\|\tilde{\sigma}_r\|^2+\int_{\mathbbm{R}^m}(\|\tilde w(r,\cdot,y)\|+\|\tilde w(r,\cdot,y)\|^2)F(dy)\right)dr$ takes finite values $\tilde{\mathbbm{P}}$ a.s.;
		\item $\tilde X^i_t=\eta_i(s)+\int_s^t\tilde \beta^i_rdr+\underset{j\leq m}{\sum}\int_s^t\tilde \sigma^{i,j}_rdW^j_r+\tilde w^i\star(p-q)_t$ $\tilde{\mathbbm{P}}$ a.s. for all $t\geq s$, $i\leq m$,
	\end{itemize}
	where $\star$ is the integration against random measures, see \cite{jacod} Chapter II.2.d for instance.
\end{definition}

\begin{remark}
	Previous Definition \ref{D213} corresponds to Definition 14.73 in \cite{jacod79}. However, in the second item we have required that 
$$\int_s^{\cdot}\int_{\mathbbm{R}^m}(\|\tilde w(r,\cdot,y)\|+\|\tilde w(r,\cdot,y)\|^2)F(dy)dr$$
 takes finite values a.s. so that $\tilde w\star(p-q)$ is a well-defined purely discontinuous locally square integrable martingale with angle bracket
	the $\mathbbm{M}_m(\mathbbm{R})$-valued process
	$\int_s^{\cdot\vee s}\int_{\mathbbm{R}^m} \tilde w\tilde w^{\intercal}(r,\cdot,y)F(dy)dr,$ (see Definition 1.27, Proposition 1.28 and Theorem 1.33 in \cite{jacod79} chapter II) and we will not need to use any truncation function.
	
	With this definition, if $\mathbbm{P}^{s,\eta}$ is a weak solution of the SDE starting at some $(s,\eta)$, then under $\mathbbm{P}^{s,\eta}$, $(X_t)_{t\geq s}$ is a special semimartingale.
\end{remark}

\begin{definition}\label{Char}
	Let $s\in\mathbbm{R}_+$ and $(Y_t)_{t\geq s}$ be a cadlag special semimartingale defined on the canonical space with (unique) decomposition $Y=Y_s+B+M^c+M^d$ where $B$ is predictable with bounded variation, $M^c$ a continuous local martingale, $M^d$ a purely discontinuous local martingale, all three 
	vanishing at the initial time $t=s$.
	  We will call \textbf{characteristics} of $Y$ the triplet $(B,C,\nu)$ where $C=\langle M^c\rangle$ and $\nu$ is the predictable compensator of the measure of the jumps of $Y$.
\end{definition}

There are several known equivalent characterizations of weak solutions of path-dependent SDEs with jumps which we will now state in our setup.

\begin{notation}\label{NotA}
	For every $f\in\mathcal{C}^2_b(\mathbbm{R}^m)$ and $t\geq 0$, we denote by $A_tf$ the r.v.
	\begin{equation}
	\beta_t\cdot \nabla f(X_t)+\frac{1}{2}Tr(\sigma_t\sigma_t^{\intercal}\nabla^2f(X_t))+\int_{\mathbbm{R}^m}(f(X_t+w(t,\cdot,y))-f(X_t)-\nabla f(X_t)\cdot w(t,\cdot,y))F(dy).
	\end{equation}
\end{notation}

\begin{proposition}\label{SDEeq}
	Let $(s,\eta)\in\mathbbm{R}_+\times\Omega$ be fixed and let $\mathbbm{P}\in\mathcal{P}(\Omega)$. There is equivalence between the  following properties.
	\begin{enumerate}
		\item $\mathbbm{P}$ is a weak solution of the SDE with coefficients $\beta,\sigma,w$;
		\item $\mathbbm{P}(\omega^s=\eta^s)=1$ and $(X_t)_{t\geq s}$ is under $\mathbbm{P}$ a special semimartingale with characteristics 
		\begin{itemize}
		\item $B=\int_s^{\cdot}\beta_rdr$;
		\item $C=\int_s^{\cdot}(\sigma\sigma^{\intercal})_rdr$;
		\item $\nu:(\omega,G)\mapsto\int_s^{+\infty}\int_E\ \mathds{1}_G(r,w(\omega,r,y))\mathds{1}_{\{w(\omega,r,y)\neq 0\}}F(dy)dr$;
		\end{itemize}
		\item  $\mathbbm{P}$ solves $MP^{s,\eta}(\chi)$ where $\chi$ is constituted of processes $f(X_{\cdot})-\int_0^{\cdot}A_rfdr$ for all $f\in\mathcal{C}^2_b(\mathbbm{R}^m)$.
		\item  $\mathbbm{P}$ solves $MP^{s,\eta}(\chi')$ where $\chi'$ is constituted of processes $f(X_{\cdot})-\int_0^{\cdot}A_rfdr$ for all functions $f:x\mapsto cos(\theta\cdot x)$ and $f:x\mapsto sin(\theta\cdot x)$ with $\theta\in\mathbbm{Q}^m$.
	\end{enumerate}
\end{proposition}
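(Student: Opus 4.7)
The plan is to establish the chain $1 \Leftrightarrow 2 \Leftrightarrow 3 \Leftrightarrow 4$. All four assertions already impose $\mathbbm{P}(\omega^s=\eta^s)=1$ --- by hypothesis for item 2, by the definition of $MP^{s,\eta}$ for items 3 and 4, and via the condition $\tilde X_t=\eta(t)$ on $[0,s]$ for item 1 --- so it suffices to compare behaviours on $[s,+\infty[$. For $1\Rightarrow 2$, I would read off the canonical special-semimartingale decomposition of $\tilde X$ from Definition \ref{D213}: the finite variation part is $\int_s^{\cdot}\tilde\beta_r\,dr$, the continuous martingale bracket is $\int_s^{\cdot}\tilde\sigma_r\tilde\sigma_r^{\intercal}\,dr$, and the jump compensator is the image of $q$ under $(r,y)\mapsto(r,w(r,\tilde X,y))$ restricted to $\{w\neq 0\}$, thanks to Proposition 1.28 and Theorem 1.33 in Chapter II of \cite{jacod79}. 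Pushing forward by $\tilde X$ to the canonical space yields a $\mathbbm{P}$ with the triplet announced in item 2. For $2\Rightarrow 1$, one must synthesize a Brownian motion $W$ and a Poisson measure $p$ of intensity $q$ on a possibly enlarged stochastic basis so that the SDE in Definition \ref{D213} holds; this is the content of Theorem 14.80 in \cite{jacod79}, which applies verbatim in the path-dependent setting since $\beta,\sigma,w$ are $\mathbbm{F}^o$-predictable.

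The equivalence $2\Leftrightarrow 3$ is a direct application of Itô's formula. Under item 2, for $f\in\mathcal{C}^2_b(\mathbbm{R}^m)$, Itô's formula for semimartingales (Theorem I.4.57 in \cite{jacod}) yields $f(X_t)-f(X_s)=\mathcal{M}^f_t+\int_s^t A_r f\,dr$ with $\mathcal{M}^f$ a local martingale; it is in fact a true $(\mathbbm{P},\mathbbm{F}^o)$-martingale because $f$ and the three summands defining $A_r f$ are uniformly bounded on compact intervals. Conversely, starting from item 3, testing the martingale property against smooth truncations of the coordinate functions $x_i$, of products $x_ix_j$, and then against general $f\in\mathcal{C}^2_c(\mathbbm{R}^m)$ successively identifies the drift $B$, the continuous quadratic characteristic $C$, and the jump compensator $\nu$ as announced in item 2. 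This is the classical Stroock--Varadhan identification, and it survives the path-dependent setting unchanged because $\beta_r(\omega)$, $\sigma_r(\omega)$ and $w(r,\omega,y)$ are all $\mathcal{F}^o_r$-measurable.

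Finally, $3\Rightarrow 4$ is immediate since $x\mapsto\cos(\theta\cdot x)$ and $x\mapsto\sin(\theta\cdot x)$ belong to $\mathcal{C}^2_b(\mathbbm{R}^m)$. For $4\Rightarrow 3$, set $g_\theta(x):=e^{i\theta\cdot x}$ and combine the real and imaginary parts from item 4 to obtain that, for every $\theta\in\mathbbm{Q}^m$, the complex process $t\mapsto g_\theta(X_t)-\int_0^t A_r g_\theta\,dr$ is a $(\mathbbm{P},\mathbbm{F}^o)$-martingale on $[s,+\infty[$. By density of $\mathbbm{Q}^m$ in $\mathbbm{R}^m$ and dominated convergence (all integrands being uniformly bounded in $\theta$), this extends to every $\theta\in\mathbbm{R}^m$, and the Lévy--Khintchine-type characterization of the triplet by exponential martingales (Theorem II.2.42 in \cite{jacod}) recovers item 2. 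The main obstacle is clearly the synthesis step $2\Rightarrow 1$: one must enlarge the stochastic basis and carefully invoke Jacod's representation of a semimartingale with absolutely continuous characteristics as a weak solution of an SDE; the remaining implications are classical Itô-type computations or Fourier density arguments.
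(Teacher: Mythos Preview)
Your proposal is correct and follows essentially the same route as the paper: both rely on Theorem 14.80 in \cite{jacod79} for $1\Leftrightarrow 2$ and on Theorem II.2.42 in \cite{jacod} for the equivalences among $2$, $3$, and $4$. The only minor difference is in handling the restriction $\theta\in\mathbbm{Q}^m$: you extend to $\theta\in\mathbbm{R}^m$ by density and dominated convergence, whereas the paper simply observes that the proof of $(4\Rightarrow 2)$ in Theorem II.2.42 of \cite{jacod} already uses only countably many $\theta$; both arguments are valid and essentially equivalent.
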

\begin{proof}
	Equivalence between items 1. and 2. is a consequence of Theorem 14.80 in \cite{jacod79}. The equivalence between items 2., 3. and 4. if $\theta$ was ranging in $\mathbbm{R}^m$ is shown in Theorem 2.42 of \cite{jacod} chapter II. 
Observe that 4. is stated for $\theta \in \mathbbm{R}^m$; however
	the proof of the implication $(4.\Longrightarrow 2.)$ in  Theorem 2.42 of \cite{jacod} chapter II
only uses the values of $\theta$ in $\mathbbm{Q}^m$.
\end{proof}

\begin{theorem}\label{SDEcond}
	Assume  that for any $(s,\eta)\in\mathbbm{R}_+\times\Omega$, the SDE with coefficients $\beta$, $\sigma$, $w$ and starting in $(s,\eta)$ admits a unique weak solution $\mathbbm{P}^{s,\eta}$. Then $(\mathbbm{P}^{s,\eta})_{(s,\eta)\in\mathbbm{R}_+\times \Omega}$ is a path-dependent canonical class verifying Hypothesis \ref{HypClass}.
\end{theorem}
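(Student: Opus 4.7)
The strategy is to reduce Theorem \ref{SDEcond} to an application of Proposition \ref{MPimpliesCond} via the martingale-problem characterization of weak solutions given in Proposition \ref{SDEeq}. By the equivalence $(1)\Leftrightarrow(4)$ of that proposition, the assumed unique weak solution $\mathbbm{P}^{s,\eta}$ coincides with the unique element of $MP^{s,\eta}(\chi')$, where
\[
\chi' := \Bigl\{ f(X_\cdot) - \int_0^\cdot A_r f\, dr : f(x) = \cos(\theta\cdot x) \text{ or } \sin(\theta\cdot x),\ \theta\in\mathbbm{Q}^m \Bigr\}.
\]
Thus well-posedness of the SDE translates into well-posedness of the martingale problem associated to $\chi'$. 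The family $\chi'$ is countable (indexed by $\mathbbm{Q}^m$ together with the two trigonometric functions), cadlag (since $X$ is cadlag and the drift is continuous in time) and $\mathbbm{F}^o$-adapted (since $X$ is $\mathbbm{F}^o$-adapted and $A_r f$ is $\mathbbm{F}^o$-progressively measurable from the measurability assumptions on $\beta,\sigma,w$).

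The only hypothesis of Proposition \ref{MPimpliesCond} that is not immediate is uniform boundedness on each $[0,T]$. While $f(X_\cdot)$ is bounded by $1$, the drift $\int_0^\cdot A_r f\,dr$ depends on the possibly unbounded coefficients $\beta,\sigma,w$. The remedy is to introduce the $\mathbbm{F}^o$-stopping times
\[
\tau_n := \inf\Bigl\{t\geq 0 : \int_0^t \bigl(\|\beta_r\| + \|\sigma_r\|^2 + \int_{\mathbbm{R}^m}(\|w(r,\cdot,y)\| + \|w(r,\cdot,y)\|^2) F(dy)\bigr)\, dr \geq n\Bigr\},
\]
and replace $\chi'$ by the countable enlarged family $\chi'' := \{M^{\tau_n} : M\in\chi',\ n\in\mathbbm{N}^*\}$. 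A Taylor expansion of $\cos$ and $\sin$ bounds $|A_r f|$ by $C_\theta(\|\beta_r\| + \|\sigma_r\|^2 + \int_{\mathbbm{R}^m}(\|w\|+\|w\|^2)F(dy))$, so each stopped process is bounded by $1 + C_\theta n$ uniformly in time; cadlag paths, $\mathbbm{F}^o$-adaptedness and countability are inherited.

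The main obstacle is to transfer well-posedness between $MP^{s,\eta}(\chi')$ and $MP^{s,\eta}(\chi'')$. One direction is immediate by optional stopping. The non-trivial direction relies on the fact that, on any solution $\mathbbm{P}$ of $MP^{s,\eta}(\chi'')$, one has $\tau_n\nearrow+\infty$ $\mathbbm{P}$-a.s.\ (because of the a.s.\ finiteness condition built into Definition \ref{WeakSol} via the equivalence of Proposition \ref{SDEeq}), so each $M\in\chi'$ becomes a local martingale, and the boundedness of $f$ together with local integrability of the drift promotes it to a genuine $(\mathbbm{P},\mathbbm{F}^o)$-martingale on $[s,+\infty[$. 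Once this equivalence is established, Proposition \ref{MPimpliesCond} applied to the bounded countable family $\chi''$ yields directly the conclusion that $(\mathbbm{P}^{s,\eta})_{(s,\eta)\in\mathbbm{R}_+\times\Omega}$ is a path-dependent canonical class verifying Hypothesis \ref{HypClass}.
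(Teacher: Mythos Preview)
Your proposal correctly isolates a point that the paper's own proof treats too quickly: the paper simply asserts that the family $\chi'$ consists of processes ``bounded on bounded intervals'' and applies Proposition \ref{MPimpliesCond} directly. As you observe, without any boundedness hypothesis on $\beta,\sigma,w$ (none is imposed in the statement of Theorem \ref{SDEcond}), the drift $\int_0^{\cdot}A_rf\,dr$ need not be uniformly bounded on $[0,T]$, so this step is not obviously justified at the stated level of generality.

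Your remedy via the stopped family $\chi''$, however, contains a genuine circularity. To apply Proposition \ref{MPimpliesCond} to $\chi''$ you need well-posedness of $MP^{s,\eta}(\chi'')$, in particular that every solution $\mathbbm{P}$ of $MP^{s,\eta}(\chi'')$ coincides with $\mathbbm{P}^{s,\eta}$. Your argument is: under such $\mathbbm{P}$ one has $\tau_n\nearrow+\infty$ $\mathbbm{P}$-a.s., whence each $M\in\chi'$ is a local, then true, martingale, so $\mathbbm{P}\in MP^{s,\eta}(\chi')$ and uniqueness of the latter concludes. But the justification you give for $\tau_n\nearrow+\infty$ under $\mathbbm{P}$ appeals to the a.s.\ finiteness condition in Definition \ref{WeakSol} through Proposition \ref{SDEeq}. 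That equivalence characterizes weak solutions as solutions of $MP(\chi')$, not of $MP(\chi'')$; you cannot invoke properties of weak solutions for $\mathbbm{P}$ before knowing that $\mathbbm{P}$ is one. A priori, $MP^{s,\eta}(\chi'')$ might admit extra solutions under which the integral $\int_0^t(\|\beta_r\|+\|\sigma_r\|^2+\ldots)\,dr$ explodes in finite time, and nothing in the stopped martingale problem rules this out. Even granting $\tau_n\nearrow+\infty$, the final promotion from local to true martingale would need an $L^1$-bound such as $\mathbbm{E}^{\mathbbm{P}}\bigl[\int_s^t|A_rf|\,dr\bigr]<\infty$ (to get class (DL)), not merely a.s.\ finiteness of the drift.

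In short: the paper's proof and yours follow the same overall strategy (reduce to Proposition \ref{MPimpliesCond} via item 4 of Proposition \ref{SDEeq}); you spotted a genuine boundedness issue the paper overlooks, but the localization argument you propose to fix it does not close, because uniqueness for the stopped problem is not inherited from uniqueness for the original one without an independent reason for $\tau_n\to\infty$.
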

\begin{proof}
	By Proposition \ref{SDEeq}, $\mathbbm{P}^{s,\eta}$ is for each $(s,\eta)$ the unique solution of $MP^{s,\eta}(\chi)$ where $\chi$ is constituted of the processes $f(X_{\cdot})-\int_s^{\cdot}A_rfdr$ for all functions $f:x\mapsto cos(\theta\cdot x)$ or $f:x\mapsto sin(\theta\cdot x)$ with $\theta\in\mathbbm{Q}^m$. Since $\chi$ is a countable set of cadlag $\mathbbm{F}^o$-adapted processes which are bounded on bounded intervals, we can conclude by Proposition \ref{MPimpliesCond}.
\end{proof}
We recall two classical examples of conditions on the coefficients for which it is known that there is existence
 and uniqueness of a weak solution for the path-dependent SDE, hence for which the above theorem applies, 
see Theorem 14.95 and Corollary 14.82 in \cite{jacod79}.
\begin{example}
 We suppose
 $\beta,\sigma,w$ to be bounded. Moreover we suppose that
  for all $n\in\mathbbm{N}^*$ there exist
$K^n_2\in L^1_{loc}(\mathbbm{R}_+)$ and
  a Borel function
$K^n_3: \mathbbm{R}^m \times \mathbbm{R}_+ \rightarrow \mathbbm{R} $ 
 such that $\int_{\mathbbm{R}^m} K^n_3(\cdot,y)F(dy)\in L^1_{loc}(\mathbbm{R}_+)$ verifying the following.

For all $x\in\mathbbm{R}^m$, $t\geq 0$ and $\omega,\omega'\in\Omega$ such that $\underset{r\leq t}{\text{sup }}\|\omega(r)\|\leq n$ and $\underset{r\leq t}{\text{sup }}\|\omega'(r)\|\leq n$, we have 
\begin{itemize}
	\item  $\|\sigma_t(\omega)-\sigma_t(\omega')\|\leq K^n_2(t)\underset{r\leq t}{\text{sup }}\|\omega(r)-\omega'(r)\|^2$;
	\item 
$\|w(t,\omega,x)-w(t,\omega',x)\|\leq K^n_3(t,x)\underset{r\leq t}{\text{sup }}\|\omega(r)-\omega'(r)\|^2$.
\end{itemize}
Finally we suppose that one of the two following hypotheses is fulfilled.
\begin{enumerate}
\item There exists $K^n_1 \in L^1_{loc}(\mathbbm{R}_+)$ such that 
for 
all $t\geq 0$ and $\omega\in\Omega$, 
 $\|\beta_t(\omega)-\beta_t(\omega')\|\leq K^n_1(t)\underset{r\leq t}{\text{sup }}\|\omega(r)-\omega'(r)\|$;
\item  there exists $c>0$ such that for 
all $x\in\mathbbm{R}^m$, $t\geq 0$ and $\omega\in\Omega$, $x^{\intercal}\sigma_t(\omega)\sigma_t(\omega)^{\intercal}x\geq c\|x\|^2$;


\end{enumerate}
\end{example}

If the assumptions of Theorem \ref{SDEcond} are fulfilled
and $\beta,\sigma$ (resp. $w$) are bounded and continuous in $\omega$ for fixed $t$ (resp. fixed $t,y$), then $(s,\eta)\longmapsto \mathbbm{P}^{s,\eta}$ is continuous for the topology of weak convergence, and in particular, the path-dependent canonical class is progressive hence all results of Section \ref{SectionMAF}
can be applied with respect to  $(\mathbbm{P}^{s,\eta})_{(s,\eta) \in 
 \mathbbm{R}_+ \times \Omega}.$

\begin{proposition}\label{tight} 
	Assume that  that $\beta,\sigma,w$ are bounded. Let $(s_n,\eta_n)_n$ be a sequence in $\Lambda$ which converges to some $(s,\eta)$. For every $n\in\mathbbm{N}$, let $\mathbbm{P}^n$ be a weak solution starting in $(s_n,\eta_n)$ of the SDE with coefficients $\beta,\sigma,w$. Then $(\mathbbm{P}^n)_{n\geq 0}$ is tight.
\end{proposition}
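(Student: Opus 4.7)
The plan is to apply Prohorov's theorem: since $\Omega=\mathbbm{D}(\mathbbm{R}_+,\mathbbm{R}^m)$ is Polish, tightness reduces (via the standard equivalence between tightness on $\mathbbm{D}(\mathbbm{R}_+,\mathbbm{R}^m)$ and on $\mathbbm{D}([0,T],\mathbbm{R}^m)$ for every $T>0$) to producing for each fixed $T$ both a compact-containment bound in $\mathbbm{R}^m$ and a Skorokhod $w'$-modulus estimate for the law of $X$ on $[0,T]$. I would fix such a $T>s$ and drop finitely many indices so that $s_n\leq T$ for all $n$. A key preliminary observation is that $K:=\{\eta_n:n\geq 0\}\cup\{\eta\}$ is compact in $\Omega$ since $\eta_n\to\eta$, so by the standard characterization of compacts in the Skorokhod space (e.g.\ Theorem 6.3 in Chapter 3 of \cite{EthierKurz}), $M:=\sup_n\sup_{t\leq T}\|\eta_n(t)\|<\infty$ and $\sup_n w'(\eta_n,\delta,T)\to 0$ as $\delta\to 0$.

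For compact containment, I would split $X$ under $\mathbbm{P}^n$ into its deterministic part $\eta_n$ on $[0,s_n]$ (bounded by $M$) and its semimartingale tail on $[s_n,T]$. The tail is handled by the SDE decomposition of Definition \ref{D213}: Burkholder-Davis-Gundy applied to the Brownian stochastic integral and to the compensated Poisson integral, together with the uniform bounds on $\beta$, $\sigma$, $w$ and the finiteness of $F$, yields $\sup_n\mathbbm{E}^n[\sup_{t\leq T}\|X_t\|^2]<\infty$, hence the desired compact-containment in $\mathbbm{R}^m$ via Markov's inequality.

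For the $w'$-modulus I would decompose again at $s_n$. On $[0,s_n]$, $X=\eta_n$ and the contribution to $w'$ is controlled uniformly by the compactness of $K$. On $[s_n,T]$, the semimartingale characteristics of $X$ under $\mathbbm{P}^n$, identified in Proposition \ref{SDEeq}(2), are absolutely continuous with densities uniformly bounded in $(n,\omega)$ by $\|\beta\|_\infty$, $\|\sigma\|_\infty^2$, and $\|w\|_\infty^2 F(\mathbbm{R}^m)$; an Aldous-type oscillation estimate, or a direct appeal to the semimartingale-tightness criterion of Theorem VI.4.18 in \cite{jacod}, gives smallness in probability of the $w'$-modulus of $X$ restricted to $[s_n,T]$, uniformly in $n$. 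The main obstacle is gluing these two estimates across $s_n$: the partition realizing $w'(X,\delta,T)$ must have all sub-intervals of length strictly greater than $\delta$ while keeping the jumps of $\eta_n$ and the semimartingale oscillations on $[s_n,T]$ in separate sub-intervals. This is resolved by inserting $s_n$ (or a nearby time, adjusted by at most $\delta$) into the partition and applying the two independent modulus estimates on either side.
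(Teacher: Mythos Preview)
Your overall strategy coincides with the paper's: split the path at $s_n$, use compactness of $\{\eta_n\}$ in Skorokhod space for the initial segment, invoke the semimartingale tightness criterion (Theorem VI.4.18 of \cite{jacod}) for the tail, and then glue the two $w'$-modulus estimates. The paper implements the tail step a bit more cleanly than you sketch: rather than dealing with semimartingales on varying intervals $[s_n,T]$, it introduces the auxiliary process $X^n:=\eta_n(s_n)\mathds{1}_{[0,s_n[}+\omega\mathds{1}_{[s_n,+\infty[}$ under $\mathbbm{P}^n$, whose law $\mathbbm{Q}^n$ is a semimartingale starting at time $0$ with uniformly dominated characteristics, so VI.4.18 applies directly to the sequence $(\mathbbm{Q}^n)$.

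The one place your sketch falls short is precisely the gluing, and your proposed fix is the wrong one. ``Inserting $s_n$ (or a nearby time)'' into a partition realizing the modulus can only \emph{shorten} sub-intervals, and the definition of $W'_N(\cdot,\theta)$ requires every sub-interval to have length $\geq\theta$; so insertion generically destroys admissibility of the partition. The paper's maneuver goes the other way: starting from a subdivision $(t^1_i)$ that controls $\eta_n$ and a subdivision $(t^2_i)$ that controls the tail (both to accuracy $\alpha/4$ with mesh $\geq\theta$), one \emph{drops} the last point of the first subdivision before $s_n$ and the last point of the second subdivision before $s_n$, and concatenates, producing a single interval $[t^1_{i^*_1-1},t^2_{i^*_2+1}[$ straddling $s_n$. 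This interval still has length $\geq\theta$, and the oscillation of $\eta_n\otimes_{s_n}\omega$ on it is bounded by three contributions of size $\alpha/4$ (two from the $\eta_n$-subdivision and one from the $\omega$-subdivision), hence by $3\alpha/4<\alpha$. Once you replace ``insert'' by this drop-and-merge argument, your proof goes through.
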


We recall some notations from \cite{jacod} Chapter VI which we will use in the 
proof of Proposition \ref{tight}.
\begin{notation}\label{Modulus}
	For any $\omega\in\Omega$ and interval $\mathcal{I}$ of $\mathbbm{R}_+$, we denote
	$W(\omega,\mathcal{I}):= \underset{s,t\in\mathcal{I}}{\text{\rm sup }}\|\omega(t)-\omega(s)\|$.
	For any $\omega\in\Omega$, $N\in\mathbbm{N}^\star$ and $\theta>0$, we
write
	\\
	$W_N(\omega,\theta):= \underset{0\leq t\leq t+\theta\leq N}{\text{\rm sup }}W(\omega,[t,t+\theta])=\underset{s,t\in[0,N]:\,|t-s|\leq \theta}{\text{\rm sup }}\|\omega(t)-\omega(s)\|$.
	\\
	For any $\omega\in\Omega$, $N\in\mathbbm{N}^\star$ and $\theta>0$, we denote
	\\
	$W'_N(\omega,\theta):= \text{\rm inf }\left\{\underset{i\leq r}{\text{\rm max } }W(\omega,[t_{i-1},t_i[):\quad 0=t_0<\cdots<t_r=N;\quad \forall 1\leq i\leq r: t_i-t_{i-1}\geq \theta \right\}$.
\end{notation}
We will also recall the classical general tightness criterion in $\mathcal{P}(\Omega)$ which one can find for example in Theorem 3.21 of \cite{jacod} Chapter VI.
\begin{theorem}\label{Tightness}
	Let $(\mathbbm{P}^n)_{n\geq 0}$ be a sequence of elements of $\mathcal{P}(\Omega)$, then it is tight iff it verifies the two following conditions.
	\begin{equation}
	\left\{
	\begin{array}{l}
	\forall N\in\mathbbm{N}^*\quad \forall \epsilon>0\quad \exists K>0\quad \forall n\in\mathbbm{N}:\quad \mathbbm{P}^n\left(\underset{t\leq N}{\text{\rm sup }}\|\omega(t)\|>K\right)\leq \epsilon\\
	\forall N\in\mathbbm{N}^*\quad \forall \epsilon>0 \quad \forall \alpha>0\quad \exists \theta\quad \forall n\in\mathbbm{N}:\quad \mathbbm{P}^n(W'_N(\omega,\theta)< \alpha)\geq 1-\epsilon.
	\end{array}
	\right.
	\end{equation}
\end{theorem}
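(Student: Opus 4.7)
The plan is to invoke Theorem \ref{Tightness}, verifying its two conditions in turn. After a short preliminary step that exploits the Skorokhod convergence of $(s_n, \eta_n)$, the work will amount to producing uniform (in $n$) $L^2$-bounds on the increments of $X$ under $\mathbbm{P}^n$, which will follow from the uniform boundedness of the characteristics of $X$.

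Preliminary step: by Proposition \ref{SDEeq}, under each $\mathbbm{P}^n$ the canonical process $X$ is on $[s_n, +\infty[$ a special semimartingale whose predictable characteristics have densities bounded by some constant $K < +\infty$ depending only on the sup norms of $\beta, \sigma, w$ and on $F(\mathbbm{R}^m)$; on $[0, s_n]$ one has $X = \eta_n$ $\mathbbm{P}^n$-a.s. Moreover, since $(s_n, \eta_n) \to (s, \eta)$ in the Skorokhod topology, the set $\{\eta_n\}$ is relatively compact in $\Omega$ and hence uniformly bounded on each compact interval: for every $N$, $C_N := \sup_n \sup_{t \leq N} \|\eta_n(t)\| < +\infty$.

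For the compact containment condition (first condition of Theorem \ref{Tightness}), I would decompose, for $t \in [s_n, N]$,
\begin{equation*}
X_t - \eta_n(s_n) = \int_{s_n}^t \tilde\beta_r \, dr + \int_{s_n}^t \tilde\sigma_r \, dW_r + \tilde w \star (p - q)_t,
\end{equation*}
bound the three terms respectively by $KN$ deterministically, by the It\^o isometry applied with the uniform bound on $\sigma$, and by the explicit angle bracket of $\tilde w \star (p-q)$ given in the Remark following Definition \ref{WeakSol} together with $F(\mathbbm{R}^m) < +\infty$. Doob's $L^2$-inequality applied to the two martingale terms on $[s_n, N]$ then yields a bound $D_N$ on $\mathbbm{E}^n[\sup_{t \leq N} \|X_t\|^2]$ that is independent of $n$, and Markov's inequality delivers the first condition.

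For the $W'_N$-modulus condition (second condition of Theorem \ref{Tightness}), I would invoke Aldous' criterion (see e.g.\ Chapter VI of \cite{jacod}): combined with the containment just obtained, it suffices to check that for every $N \in \mathbbm{N}^*$ and $\epsilon > 0$ there exists $\theta > 0$ such that, for every $n$ and every pair of $\mathbbm{F}^o$-stopping times $\tau_1 \leq \tau_2 \leq (\tau_1 + \theta) \wedge N$,
\begin{equation*}
\mathbbm{P}^n\!\left(\|X_{\tau_2} - X_{\tau_1}\| \geq \epsilon\right) \leq \epsilon.
\end{equation*}
This is obtained by applying the same three-term decomposition between $\tau_1 \vee s_n$ and $\tau_2 \vee s_n$: the drift contribution is bounded in absolute value by $K\theta$, while by the optional stopping theorem applied to the two angle brackets, the martingale contributions have second moment at most $CK^2\theta$, uniformly in $n$. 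Chebyshev's inequality then closes the step for $\theta$ small enough.

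The main obstacle I anticipate is precisely the passage to $W'_N$ rather than $W_N$: a jump of $X$ whose size is bounded away from zero makes $W_N(X, \theta)$ of order one for every $\theta > 0$, so no direct $W_N$-moment estimate can possibly work. The whole point of the $W'_N$-variant — which permits the partition points to be chosen adaptively near jump times — and of Aldous' criterion, which translates tightness into a stopping-time increment estimate, is precisely to bypass this obstruction; this is the only non-routine element of the argument, the rest being standard $L^2$-estimates on a semimartingale with bounded characteristics.
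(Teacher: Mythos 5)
Your proposal is not a proof of Theorem \ref{Tightness} at all: it \emph{invokes} that theorem as a tool. Theorem \ref{Tightness} is the classical tightness criterion in terms of compact containment and the $W'_N$-modulus, which the paper simply cites (Theorem 3.21 of Chapter VI in \cite{jacod}) without re-proving. What you have written is a proof sketch for Proposition \ref{tight} --- tightness of the specific sequence $(\mathbbm{P}^n)$ of weak solutions of the SDE --- so you are answering a downstream question, not the one posed.

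Even read charitably as a proposal for Proposition \ref{tight}, there is a genuine gap in the modulus-of-continuity step. Aldous' criterion requires that, uniformly in $n$, the increment $\|X_{\tau_2}-X_{\tau_1}\|$ between stopping times $\tau_1\le\tau_2\le(\tau_1+\theta)\wedge N$ be small in probability. But under $\mathbbm{P}^n$ the canonical process is \emph{deterministically} equal to $\eta_n$ on $[0,s_n]$, and $\eta_n$ may have a jump of size bounded away from zero at some $t^*_n\in[0,s_n]$. Taking the deterministic stopping times $\tau_1 = t^*_n - \theta/2$ and $\tau_2 = (t^*_n+\theta/2)\wedge N$ then forces $\mathbbm{P}^n(\|X_{\tau_2}-X_{\tau_1}\|\ge\epsilon)=1$ no matter how small $\theta$ is, so the Aldous estimate cannot hold. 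Aldous' criterion is sufficient, not necessary, precisely because of fixed deterministic jumps: your own example of an obstruction ($W_N$ blowing up at a jump) is not resolved by it. The ``three-term decomposition between $\tau_1\vee s_n$ and $\tau_2\vee s_n$'' controls only the semimartingale part and leaves the $[0,s_n]$ segment unaddressed. The paper's proof of Proposition \ref{tight} avoids this by \emph{splitting}: it first establishes tightness of the laws $\mathbbm{Q}^n$ of the modified process that is frozen at $\eta_n(s_n)$ on $[0,s_n[$ --- via the semimartingale-characteristics criterion, Theorem 4.18 of Chapter VI in \cite{jacod} (not Aldous) --- and separately exploits the relative compactness of $\{\eta_n\}$ to control the $W'_N$-modulus of the deterministic initial segment. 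The crux is then the gluing argument (the construction of the subdivision $(t^3_0,\dots,t^3_{r_3})$ around $s_n$) showing how a good $W'_N$-partition for $\eta_n$ and a good $W'_N$-partition for $\omega$ combine into one for $\eta_n\otimes_{s_n}\omega$. That gluing is the actual content of the proof and is entirely absent from your sketch.
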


Finally we will also need to introduce a definition.
\begin{definition}
	A sequence of probability measures on $(\Omega,\mathcal{F})$ is called $\mathcal{C}$-tight if it is tight and if each of its limiting points
	has all its support in $\mathcal{C}(\mathbbm{R}_+,\mathbbm{R}^m)$.
\end{definition}

\begin{prooff}.of Proposition \ref{tight}.\\
	We fix a converging sequence $(s_n,\eta_n)\underset{n}{\longrightarrow}(s,\eta)$ in $\Lambda$, and for every $n$, a weak solution $\mathbbm{P}^n$ of the SDE with coefficients $\beta,\sigma, w$ starting in $(s_n,\eta_n)$. In order to show that $(\mathbbm{P}^n)_{n\geq 0}$ is tight, we will use Theorem \ref{Tightness}.
	The main idea consists in combining the fact that 
	the canonical process $X$ under $\mathbbm{P}^n$ is deterministic on $[0,s_n]$, where it coincides 
	with $\eta_n$ with the fact that on $[s_n,+\infty[$
	it is a semimartingale with known characteristics. 
	So we will split the study of the modulus of continuity of path $\omega$  on these two intervals $[0,s_n]$ and $[s_n,+\infty[$.
	
	Since $\eta_n$ tends to $\eta$, the set $\{\eta_n:n\geq 0\}$ is relatively compact in $\Omega$ so by Theorem 1.14.b in \cite{jacod} Chapter VI we have
	\begin{equation}\label{EqTight}
	\left\{
	\begin{array}{l}
	\forall N\in\mathbbm{N}^*\quad\exists K_1>0\quad\forall n\in\mathbbm{N}:\quad\underset{t\in[0,N]}{\text{ sup }}\|\eta_n(t)\|\leq K_1\\
	\forall N\in\mathbbm{N}^*\quad\forall \alpha >0\quad\exists \theta_1>0\quad\forall n\in\mathbbm{N}:\quad W'_N(\eta_n,\theta_1)<\alpha.
	\end{array}
	\right.
	\end{equation}
	For fixed $n\in\mathbbm{N}$, we now introduce  the process
	\\
	$X^n:\omega\longmapsto\eta_n(s_n)\mathds{1}_{[0,s_n[}+\omega\mathds{1}_{[s_n,+\infty[}$, we denote by $\mathbbm{Q}^n:=\mathbbm{P}^n\circ(X^n)^{-1}\in\mathcal{P}(\Omega)$ its law under $\mathbbm{P}^n$ and we now show that $(\mathbbm{Q}^n)_{n\geq 0}$ is tight.
	
	By Proposition \ref{SDEeq}, under 
	$\mathbbm{P}^n$ ,
	$(X_t)_{t\in[s_n,+\infty[}$ is a special semimartingale  with initial value $\eta_n(s_n)$ and characteristics (see Definition \ref{Char}) $\int_{s_n}^{\cdot}\beta_rdr$,  $\int_{s_n}^{\cdot}(\sigma\sigma^{\intercal})_rdr$ and 
	$(\omega,A)\mapsto\int_{s_n}^{+\infty}\int_{\mathbbm{R}^m} \mathds{1}_A(r,w(r,\omega,y))\mathds{1}_{\{w(r,\omega,y)\neq 0\}}F(dy)dr$.
	Therefore, since $X^n$ is constant on $[0,s_n[$ and since on $[s_n,+\infty[$  its law under $\mathbbm{P}^n$ coincides with the one of $X$, we can say that  $\mathbbm{Q}^n$ is the law of a special semi-martingale (starting at time $t=0$) 
	with initial value $\eta_n(s_n)$, and 
	characteristics $\int_0^{\cdot}\mathds{1}_{[s_n,+\infty[}(r)\beta_rdr$,  $\int_{0}^{\cdot}\mathds{1}_{[s_n,+\infty[}(r)(\sigma\sigma^{\intercal})_rdr$ and \\
	$(\omega,G)\mapsto\int_0^{+\infty}\mathds{1}_{[s_n,+\infty[}(r)\int_{\mathbbm{R}^m} \mathds{1}_G(r,w(r,\omega,y))\mathds{1}_{\{w(r,\omega,y)\neq 0\}}F(dy)dr$.
	\\
	Theorem 4.18 in \cite{jacod} chapter VI implies that $(\mathbbm{Q}^n)_{n\geq 0}$ is tight if and only if the properties below hold true.
	\begin{enumerate}
		\item $(\mathbbm{Q}^n\circ X_0^{-1})_{n\geq 0}$ is tight;
		\item the following sequences are $\mathcal{C}$-tight (under $(\mathbbm{Q}^n)_{n\geq 0}$):
		\begin{enumerate}
			\item $(B^n:=\int_0^{\cdot}\mathds{1}_{[s_n,+\infty[}(r)\beta_rdr)_{n\geq 0}$;
			\item $\left(\tilde C^n:=\int_{0}^{\cdot}\mathds{1}_{[s_n,+\infty[}(r)\left((\sigma\sigma^{\intercal})_r+\int_{\mathbbm{R}^m} (ww^{\intercal})(r,\cdot,y)F(dy)\right)dr\right)_{n\geq 0}$;
			\item $\left(G^n_p:=\int_{0}^{\cdot}\mathds{1}_{[s_n,+\infty[}(r) \int_{\mathbbm{R}^m} \mathds{1}_{\{w(r,\omega,y)\neq 0\}}((p\|w(r,\cdot,y)\|-1)^+)\wedge 1F(dy)dr\right)_{n\geq 0}$
			\\
			for all $p\in\mathbbm{N}$;
		\end{enumerate}
		\item for all $N>0$, $\epsilon>0$,
		\begin{equation}
		\underset{a\rightarrow\infty}{\rm lim}\underset{n}{\rm sup}\,\mathbbm{Q}^n\left( \int_{s_n}^{N} \int_{\mathbbm{R}^m} \mathds{1}_{\{\|w(r,\cdot,y)\|>a\}}F(dy)dr >\epsilon\right)=0.
		\end{equation}
	\end{enumerate}
	Item 3. trivially holds since $w$ is bounded.
	At this point $\eta_n(s_n)$ is a
	bounded sequence according to the first line of \eqref{EqTight} and the fact that the sequence
	$(s_n)_{n\geq 0}$ is bounded, so $(\mathbbm{Q}^n\circ X_0^{-1})_{n\geq 0}=(\delta_{\eta_n(s_n)})_{n\geq 0}$ is obviously tight.
	We are left to show item 2.  By Proposition 3.36 in \cite{jacod} chapter VI, items 2. (a) and 2. (b) hold if 
	$(Var(B^n))_{n\geq 0}=(\int_0^{\cdot}\mathds{1}_{[s_n,+\infty[}(r)\|\beta_r\|dr)_{n\geq 0}$ and 
	\\
$(Tr(\tilde C^n))_{n\geq 0}=\left(\int_{0}^{\cdot}\mathds{1}_{[s_n,+\infty[}(r)\left(Tr(\sigma\sigma^{\intercal}_r)+\int_{\mathbbm{R}^m} Tr(ww^{\intercal}(r,\cdot,y))F(dy)\right)dr\right)_{n\geq 0}$ are $\mathcal{C}$-tight. Finally, $\beta,\sigma,w,F$ being bounded, there exists some strictly positive constant $K$ such that all the processes given below are increasing: 
	\begin{itemize}
	\item $t\mapsto Kt - Var(B^n)_t,\quad n\geq 0$;
	\item $t\mapsto Kt - Tr(\tilde C^n_t),\quad n\geq 0$;
		\item $t\mapsto Kt - (G^n_p)_t,\quad n,p\geq 0$.
	\end{itemize} 
In the terminology of \cite{jacod} chapter VI,
this means that the increasing processes 
$Var(B^n), \quad n\geq 0$,
$Tr(\tilde C^n),\quad n\geq 0$, $G^n_p\quad n,p\geq 0$
are strongly dominated by the increasing function
$t \mapsto Kt$.
The singleton $t\mapsto Kt$ being trivially $\mathcal{C}$-tight,
  Proposition 3.35 in \cite{jacod} chapter VI implies that 
the dominated sequences of processes $(Var(B^n))_{n\geq 0}$, $(Tr(\tilde C^n))_{n\geq 0}$ and $(G^n_p)_{n\geq 0}$ for all $p$ are  $\mathcal{C}$-tight. Finally $(\mathbbm{Q}^n)_{n\geq 0}$ is tight.

	Now by Theorem \ref{Tightness}
	this implies that \\
	\begin{equation}\label{EqTight3}
	\left\{
	\begin{array}{l}
	\forall N\in\mathbbm{N}^*\quad \forall \epsilon>0\quad \exists K_2>0\quad \forall n\in\mathbbm{N}:\quad \mathbbm{Q}^n\left(\underset{t\leq N}{\text{sup }}\|\omega(t)\|>K_2\right)\leq \epsilon\\
	\forall N\in\mathbbm{N}^*\quad \forall \epsilon>0 \quad \forall \alpha>0\quad \exists \theta_2\quad \forall n\in\mathbbm{N}:\quad \mathbbm{Q}^n(W_N'(\omega,\theta_2)< \alpha)\geq 1-\epsilon.
	\end{array}
	\right.
	\end{equation}
	Combining the first line of \eqref{EqTight} and the first line of \eqref{EqTight3} and by construction of $\mathbbm{Q}^n$, taking $K=K_1+K_2$ for instance, we have
	\begin{equation}\label{EqTight4}
	\forall N\in\mathbbm{N}^*\quad \forall \epsilon>0\quad \exists K>0\quad \forall n\in\mathbbm{N}:\quad \mathbbm{P}^n\left(\underset{t\leq N}{\text{sup
	}}\|\omega(t)\|>K\right)\leq \epsilon.
	\end{equation}
	Our aim is now to show that
	\begin{equation}\label{EqTight5}
	\forall N\in\mathbbm{N}^*\quad \forall \epsilon>0 \quad \forall \alpha>0\quad \exists \theta\quad \forall n\in\mathbbm{N}:\quad \mathbbm{P}^n(W'_N(\omega,\theta)< \alpha)\geq 1-\epsilon;
	\end{equation}
this combined with \eqref{EqTight4} will imply by Theorem \ref{Tightness} 
	that $(\mathbbm{P}^n)_{n\geq 0}$ is tight.
	
	In what follows, if $\eta,\omega\in\Omega$ and $s\in\mathbbm{R}_+$, $\eta\otimes_s\omega$ will denote the path $\eta\mathds{1}_{[0,s[}+\omega\mathds{1}_{[s,+\infty[}$, which still belongs to $\Omega$.
	
	By construction of $\mathbbm{Q}^n$, for every $n$,  $\mathbbm{P}^n$ is the law of $\eta_n\otimes_{s_n}\omega$ under $\mathbbm{Q}^n$. Therefore, \eqref{EqTight5} is equivalent to 
	\begin{equation}\label{EqTight6}
	\forall N\in\mathbbm{N}^*\quad \forall \epsilon>0 \quad \forall \alpha>0\quad \exists \theta\quad \forall n\in\mathbbm{N}:\quad \mathbbm{Q}^n(W'_N(\eta_n\otimes_{s_n}\omega,\theta)< \alpha)\geq 1-\epsilon,
	\end{equation}
	and this is what we will now show to conclude the proof of Proposition
\ref{tight}. So we prove \eqref{EqTight6}.
	
	We fix some $N\in\mathbbm{N}^*$, $\alpha>0$ and $\epsilon>0$. Combining the second lines of \eqref{EqTight} and of \eqref{EqTight3}, there exists $\theta>0$ such that for all $n\geq 0$, 
	\begin{equation}\label{EqTight7}
	\left\{
	\begin{array}{l}
	W'_N(\eta_n,\theta)<\frac{\alpha}{4}\\
	\mathbbm{Q}^n(W_N'(\omega,\theta)<\frac{\alpha}{4})\geq 1-\epsilon.
	\end{array}
	\right.
	\end{equation}
	We show below  that, for every  $n$ 
	\begin{equation} \label{EInclusion}
	\{\omega \vert W_N'(\omega,\theta)<\frac{\alpha}{4}\}
	\subset 
	\{\omega \vert W'_N(\eta_n\otimes_{s_n}\omega,\theta)< \alpha\}.
	\end{equation}
This together with \eqref{EqTight7}  will imply that for all $n$,
	$$\mathbbm{Q}^n(W'_N(\eta_n\otimes_{s_n}\omega,\theta)< \alpha)\geq  	\mathbbm{Q}^n(W_N'(\omega,\theta)<\frac{\alpha}{4})\geq 1-\epsilon, $$
  hence that \eqref{EqTight6} is verified.
	
	We fix $n$. To establish \eqref{EInclusion}
let $\omega$ such that
	$W_N'(\omega,\theta)<\frac{\alpha}{4}$; we need to show that
	\begin{equation}\label{EqTight8}
	W'_N(\eta_n\otimes_{s_n}\omega,\theta)< \alpha.
	\end{equation}
	By the first line of \eqref{EqTight7} and the definition of $W'_N$ (see Notation \ref{Modulus}), there exist two subdivisons of $[0,N]$ $0=t^1_0<\cdots < t^1_{r_1}=N$, $0=t^2_0<\cdots < t^2_{r_2}=N$ with increments $t^j_i-t^j_{i-1}\geq \theta$ for all $1\leq i\leq r_j$ and $j=1,2$, such that 
	\begin{equation}\label{EqTight9}
	\left\{\begin{array}{rcl}
	W(\eta_n,[t^1_{i-1},t^1_i[)&\leq& \frac{\alpha}{4}\text{ for all }1\leq i\leq r_1\\
	W(\omega,[t^2_{i-1},t^2_i[)&\leq& \frac{\alpha}{4}\text{ for all }1\leq i\leq r_2.
	\end{array}\right.
	\end{equation}
	We set $i^*_j:=\text{max }\{i:t^j_i\leq s_n\}$ for $j=1,2$ and introduce the third subdivision 
	\begin{equation}
	(t^3_0,\cdots, t^3_{r_3}):=(t^1_0,\cdots,t^1_{i^*_1-1},t^2_{i^*_2+1},\cdots,t^2_{r_2}),
	\end{equation}
	which we represent in the following graphic.
	\\
\includegraphics[width=12cm]{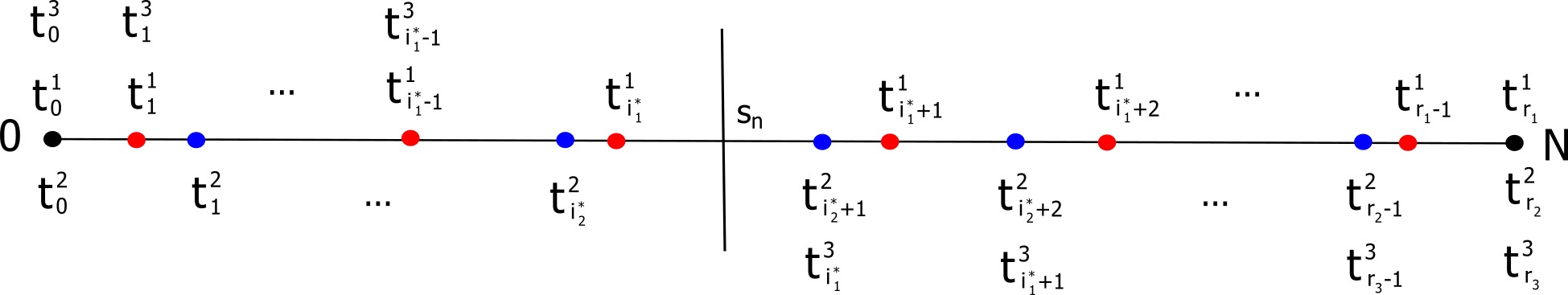}
	\\
	As for the other two, the subdivision of $[0,N]$ above verifies
 $t^3_i-t^3_{i-1}\geq \theta$ for all $i$. Indeed, $t^3_i-t^3_{i-1}$ is either equal to $t^1_i-t^1_{i-1}\geq \theta$, or to $t^2_j-t^2_{j-1}\geq \theta$ for some $j$, or to $t^2_{i^*_2+1}-t^1_{i^*_1-1}\geq t^1_{i^*_1}-t^1_{i^*_1-1} \geq \theta$ where the first inequality follows by the fact that $t^1_{i^*_1-1}\leq t^1_{i^*_1}\leq s_n<t^2_{i^*_2+1}$.
	
	Now by definition of $W'_N(\eta_n\otimes_{s_n}\omega,\theta)$, in order to show \eqref{EqTight8} and conclude this proof, it is enough to show that 
	\begin{equation}\label{EqTight10}
	W(\eta_n\otimes_{s_n}\omega,[t^3_{i-1},t^3_i[)<\alpha,
	\end{equation}
	for all $1\leq i\leq r_3$.
	
	If $i\leq i^*_1-1$, then $[t^3_{i-1},t^3_i[=[t^1_{i-1},t^1_i[\subset[0,s_n[$ where $\eta_n\otimes_{s_n}\omega$ coincides with
	$\eta_n$ so $W(\eta_n\otimes_{s_n}\omega,[t^3_{i-1},t^3_i[)=W(\eta_n,[t^1_{i-1},t^1_i[)\leq \frac{\alpha}{4}<\alpha$ by the first line of \eqref{EqTight9}.
	Similarly, if $i\geq i^*_1+1$, then $[t^3_{i-1},t^3_i[=[t^2_{i-i^*_1+i^*_2},t^2_{i-i^*_1+i^*_2+1}[\subset[s_n,+\infty[$ where $\eta_n\otimes_{s_n}\omega$ coincides with $\omega$ so $W(\eta_n\otimes_{s_n}\omega,[t^3_{i-1},t^3_i[)=
	W(\omega,[t^2_{i-i^*_1+i^*_2},t^2_{i-i^*_1+i^*_2+1}[)\leq \frac{\alpha}{4}<\alpha$ by the second line of \eqref{EqTight9}.
	Finally, we consider the specific case $i=i^*_1$ meaning that $[t^3_{i-1},t^3_i[=[t^1_{i^*_1-1},t^2_{i^*_2+1}[$ contains $s_n$. We have
	\begin{equation}
	\begin{array}{rcl}
	W(\eta_n\otimes_{s_n}\omega,[t^1_{i^*_1-1},t^2_{i^*_2+1}[)&\leq& W(\eta_n\otimes_{s_n}\omega,[t^1_{i^*_1-1},t^1_{i^*_1}[) \\
	&+&W(\eta_n\otimes_{s_n}\omega,[t^1_{i^*_1},s_n[)+W(\eta_n\otimes_{s_n}\omega,[s_n,t^2_{i^*_2+1}[)\\
	&\leq& W(\eta_n,[t^1_{i^*_1-1},t^1_{i^*_1}[)+W(\eta_n,[t^1_{i^*_1},s_n[)+W(\omega,[s_n,t^2_{i^*_2+1}[)\\
	&\leq& W(\eta_n,[t^1_{i^*_1-1},t^1_{i^*_1}[)+W(\eta_n,[t^1_{i^*_1},t^1_{i^*_1+1}[)\\
	&+& W(\omega,[t^2_{i^*_2},t^2_{i^*_2+1}[)\\
	&\leq&\frac{\alpha}{4}+\frac{\alpha}{4}+\frac{\alpha}{4}\\
	&<& \alpha,
	\end{array}
	\end{equation}
	by \eqref{EqTight9}. So \eqref{EqTight10} is verified for all $i$ and the proof is complete.
	
\end{prooff}


\begin{proposition}\label{UniqueMPcontImpliesProg}
	Assume that $\beta,\sigma$ (resp. $w$) are bounded and that for Lebesgue almost all $t$ (resp. $dt\otimes dF$ almost all $(t,y)$), $\beta(t,\cdot),\sigma(t,\cdot)$ (resp. $w(t,\cdot,y)$) are continuous. Assume that for any $(s,\eta)\in\mathbbm{R}_+\times\Omega$ there exists a unique weak solution $\mathbbm{P}^{s,\eta}$ of the SDE of coefficients $\beta,\sigma,w$ starting in $(s,\eta)$.
	Then $\begin{array}{ccl}
	(s,\eta)&\longmapsto &\mathbbm{P}^{s,\eta}\\
	\Lambda&\longrightarrow&\mathcal{P}(\Omega)
	\end{array}$ is continuous.
	Moreover the path-dependent canonical class $(\mathbbm{P}^{s,\eta})_{(s,\eta)\in\mathbbm{R}_+\times \Omega}$ is progressive.
\end{proposition}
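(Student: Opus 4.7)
The plan is to prove continuity of $(s,\eta)\mapsto\mathbbm{P}^{s,\eta}$ on $\Lambda$ by tightness and identification of weak limits, and then to deduce progressive measurability as a short corollary. Fix a sequence $(s_n,\eta_n)\to(s,\eta)$ in $\Lambda$ and set $\mathbbm{P}^n:=\mathbbm{P}^{s_n,\eta_n}$. Since $\beta,\sigma,w$ are bounded, Proposition \ref{tight} yields tightness of $(\mathbbm{P}^n)_n$, so by Prokhorov any subsequence has a further subsequence converging weakly to some $\mathbbm{P}^\infty\in\mathcal{P}(\Omega)$. The uniqueness assumption reduces the problem to showing that every such limit $\mathbbm{P}^\infty$ belongs to $MP^{s,\eta}(\chi')$, with $\chi'$ the countable collection of item 4.\ in Proposition \ref{SDEeq}: once this is proved, $\mathbbm{P}^\infty=\mathbbm{P}^{s,\eta}$ and the whole sequence converges.

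For the initial condition $\mathbbm{P}^\infty(\omega^s=\eta^s)=1$, I would evaluate along the continuity times of $\mathbbm{P}^\infty$, namely those $t$ at which $\omega\mapsto X_t(\omega)$ is $\mathbbm{P}^\infty$-a.s.\ continuous; the complement of this set is at most countable, so by right-continuity of paths together with the Skorokhod convergence $(s_n,\eta_n)\to(s,\eta)$ the identity propagates from a dense subset of $[0,s]$ to all of $[0,s]$. For the martingale property, fix $f(x)=\cos(\theta\cdot x)$ or $\sin(\theta\cdot x)$ with $\theta\in\mathbbm{Q}^m$, and put $M^f_t:=f(X_t)-\int_0^t A_rf\,dr$. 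For $s\le u\le v$ chosen to be continuity times of $\mathbbm{P}^\infty$, and for a test functional of the form $G=g(X_{t_1},\ldots,X_{t_k})$ with $t_1<\cdots<t_k\le u$ also continuity times and $g$ bounded continuous on $(\mathbbm{R}^m)^k$, one has $\mathbbm{E}^{\mathbbm{P}^n}[(M^f_v-M^f_u)G]=0$ for all $n$ large enough that $s_n\le t_1$. Passing to the limit under $\mathbbm{P}^\infty$ and invoking a monotone class extension over $\mathcal{F}^o_u$ together with the density of continuity times in $[s,\infty[$ yields the full martingale property.

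The main obstacle is passing to the limit in $\mathbbm{E}^{\mathbbm{P}^n}\!\left[G\int_u^v A_rf\,dr\right]$, since Skorokhod convergence $\omega^n\to\omega$ does not imply pointwise convergence $\omega^n(r)\to\omega(r)$ at every $r$. The resolution relies crucially on the continuity assumptions: $\beta(r,\cdot),\sigma(r,\cdot)$ are continuous for Lebesgue-a.e.\ $r$, and $w(r,\cdot,y)$ is continuous for $dr\otimes dF$-a.e.\ $(r,y)$. Combined with the fact that $\omega\mapsto X_r(\omega)$ is $\mathbbm{P}^\infty$-a.s.\ continuous for all $r$ outside an at-most-countable set, and with the uniform boundedness of the coefficients, Fubini and dominated convergence show that the functional $\omega\mapsto\int_u^v A_rf(\omega)\,dr$ is bounded and $\mathbbm{P}^\infty$-a.s.\ continuous. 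The Portmanteau theorem then delivers the desired convergence of expectations.

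Concerning progressivity, recall from Definition \ref{MP} that the extension to $\mathbbm{R}_+\times\Omega$ is $\mathbbm{P}^{t,\omega}:=\mathbbm{P}^{t,\omega^t}$. For any $F\in\mathcal{F}$, the map $(t,\omega)\mapsto\mathbbm{P}^{t,\omega}(F)$ factors as the composition of: the stopping map $(t,\omega)\mapsto(t,\omega^t)$, which is $\mathbbm{F}^o$-progressively measurable into $\Lambda$ because $(t,\omega)\mapsto X_r(\omega^t)=X_{r\wedge t}(\omega)$ is progressive for every $r$, by the same kind of argument as in the proof of Proposition \ref{BorelLambda}; the map $\Phi:(t,\eta)\mapsto\mathbbm{P}^{t,\eta}$ on $\Lambda$, which by the first part of the statement is continuous, hence Borel for $\mathcal{B}(\Lambda)=\Lambda\cap\mathcal{P}ro^o$; and the Borel evaluation $\mathbbm{P}\mapsto\mathbbm{P}(F)$ on $\mathcal{P}(\Omega)$. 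The resulting composition is $\mathbbm{F}^o$-progressively measurable, which is precisely the progressivity condition of Definition \ref{DefCondSyst}, so all results of Section \ref{SectionMAF} become applicable to $(\mathbbm{P}^{s,\eta})_{(s,\eta)\in\mathbbm{R}_+\times\Omega}$.
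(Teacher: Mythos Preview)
Your approach is essentially the paper's: tightness via Proposition \ref{tight}, Prokhorov, identification of any limit point as a solution of the martingale problem starting at $(s,\eta)$, then uniqueness. Your use of the countable family $\chi'$ (item 4 of Proposition \ref{SDEeq}) rather than all of $\mathcal{C}^2_b$ is a harmless variant, and your argument for the Skorokhod-continuity of $\omega\mapsto\int_u^v A_rf(\omega)\,dr$ is the right one. The deduction of progressivity from continuity on $\Lambda$ via $\mathcal{B}(\Lambda)=\Lambda\cap\mathcal{P}ro^o$ and the factorisation through $(t,\omega)\mapsto(t,\omega^t)$ is exactly what the paper does, only spelled out in more detail.

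There is, however, one genuine gap in your treatment of the initial condition. You write that the identity $\mathbbm{P}^\infty(\omega(t)=\eta(t))=1$ ``propagates from a dense subset of $[0,s]$ to all of $[0,s]$'' by right-continuity. Right-continuity of cadlag paths lets you pass from a dense subset of $[0,s)$ to all of $[0,s)$, but it does \emph{not} deliver the value at $t=s$: approaching $s$ from the left you only get $\omega(s^-)=\eta(s^-)$ $\mathbbm{P}^\infty$-a.s., and neither $\omega$ nor $\eta$ need be left-continuous at $s$. Nor can you approach $s$ from the right, since for $t>s$ the process is already random under $\mathbbm{P}^\infty$. The paper devotes a separate argument to this point: one first shows $\eta_n(s_n)\to\eta(s)$ (using that $(s_n,\eta_n)\in\Lambda$ and Skorokhod convergence), and then exploits the martingale problem itself by taking expectations of $f(X_{t_p})-f(\eta_n(s_n))-\int_{s_n}^{t_p}A_rf\,dr$ under $\mathbbm{P}^{s_n,\eta_n}$ for $t_p\downarrow s$, passing first $n\to\infty$ and then $p\to\infty$ to obtain $\mathbbm{E}^{\mathbbm{P}^\infty}[f(X_s)]=f(\eta(s))$ for all $f\in\mathcal{C}^\infty_c$, hence $\mathbbm{P}^\infty\circ X_s^{-1}=\delta_{\eta(s)}$. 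You should incorporate this step.
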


\begin{remark}
Taking Theorem \ref{SDEcond} into account,
the family of probabilities 
$(\mathbbm{P}^{s,\eta})_{(s,\eta)\in\mathbbm{R}_+\times \Omega}$
 of Proposition \ref{UniqueMPcontImpliesProg}
 constitutes a progressive path-dependent canonical class verifying   Hypothesis \ref{HypClass}. It therefore verifies Hypothesis \ref{HypAF} and all results of Section \ref{SectionMAF} apply. 
\end{remark}
\begin{proof} of Proposition \ref{UniqueMPcontImpliesProg}. \\
	We consider a convergent sequence $(s_n,\eta_n)\underset{n}{\longrightarrow}(s,\eta)$ in $\Lambda$. Since $\beta,\sigma$ are bounded, 
	by Proposition \ref{tight}  $(\mathbbm{P}^{s_n,\eta_n})_{n\in\mathbbm{N}}$ is tight,
 hence relatively compact by Prokhorov's theorem. We consider a subsequence  $\mathbbm{P}^{s_{n_k},\eta_{n_k}}\underset{k}{\longrightarrow}\mathbbm{Q}$ and we show below that $\mathbbm{Q}$ is a weak solution of the SDE 
	with coefficients $\beta, \sigma, w$, starting at $(s,\eta)$.
	Since that problem has a unique solution, we will have $\mathbbm{Q}=\mathbbm{P}^{s,\eta}$. This will imply that $\mathbbm{P}^{s_{n},\eta_{n}}\underset{n}{\longrightarrow}\mathbbm{P}^{s,\eta}$, hence the announced continuity.
	
We  will indeed verify item 3. of Proposition \ref{SDEeq}.
	For the convenience of the reader, we will omit the extraction of the subsequence in the notations.  
	
	We start by showing 
	\begin{equation}\label{EQs}
	\mathbbm{Q}(\omega^s=\eta^s)=1.
	\end{equation} 
	The set
	\begin{equation} \label{E417}
	D:=\left\{t\in\mathbbm{R}_+:\mathbbm{Q}(X_t\neq X_{t^-})>0\right\}\cup \left\{t\in[0,s]:\eta(t)\neq \eta(t^-)\right\},
	\end{equation}
	is countable because first $\eta$ is a  cadlag function
	and second because of Proposition  3.12 in \cite{jacod} Chapter VI
which states that, for every probability $\mathbbm{Q}$
on $(\Omega,\mathcal F)$, the set
$D_0:=\left\{t\in\mathbbm{R}_+:\mathbbm{Q}(X_t\neq X_{t^-})>0\right\}$
is countable. If $t \notin D_0$ then 
\begin{equation} \label{NotD_0}	
\mathbbm{P}^{s_{n},\eta_{n}}\circ X_t^{-1}\underset{n}{\Longrightarrow}\mathbbm{Q}\circ X_t^{-1},
\end{equation}
 by  Proposition 3.14 ibidem.
 Since $\eta_n $ converges to $\eta$
in the Skorohod topology, if $t \notin D$
($t$  is a continuity point of $\eta$), then
  it follows that 
	$\eta_{n}(t)\underset{n}{\longrightarrow}\eta(t)$, see Proposition 2.3 of
 \cite{jacod} Chapter VI.
	\\
	
Let $\epsilon>0$, $t\in[0,s-\epsilon]\cap D^c$ be fixed. 
	Since $s_n$ tends to $s$, we can suppose without 
loss of generality that $s_n\geq s-\epsilon$ for all $n$, so that 
$\mathbbm{P}^{s_{n},\eta_{n}}\circ X_t^{-1}=\delta_{\eta_{n}(t)}$.
By \eqref{NotD_0}
 this sequence converges to $\mathbbm{Q}\circ X_t^{-1}$ which is therefore necessarily equal to $\delta_{\eta(t)}$ since $\eta_n(t)$ tends to $\eta(t)$ being $t\notin D$. 
	This means that
	\begin{equation} \label{Qetat}
	\mathbbm{Q}(\omega(t)=\eta(t))=1,
	\end{equation}
	for all  $t\in[0,s-\epsilon]\cap D^c$.
	Since $\epsilon>0$ is arbitrary, \eqref{Qetat} holds 
	for all $t\in[0,s[\cap D^c$; and since $\omega$ is right-continuous and 
	$D$ is countable,  \eqref{Qetat} 
        holds for all $t\in[0,s[$. We will now show that 
	\eqref{Qetat} also holds for $t=s$.
	We first note that 
	\begin{equation}\label{Elim1}
	\eta_{n}(s_{n})\underset{n}{\longrightarrow}\eta(s).
	\end{equation}
	Indeed,  without restriction of generality we can consider that
	$s_n\leq s+1$, so since $(s_n,\eta_n(s_n)) \in \Lambda$, $\eta_n$
	is constantly
	equal to $\eta_n(s_n)$ on $[s_n,+\infty[$ which contains $[s+1,+\infty[$.
	On the other hand $\eta$ is constantly equal
	to $\eta(s)$ on $[s,+\infty[$
	which also contains $[s+1,+\infty[$, and $\eta_n$ tends to $\eta$ almost everywhere on that interval, 
	because it converges in the
	Skorokhod sense. So necessarily
\eqref{Elim1} holds.
	\\
	We fix now some $f\in\mathcal{C}_c^{\infty}(\mathbbm{R}^m)$.
	For all $n$, since $\mathbbm{P}^{s_{n},\eta_{n}}$ is a weak solution of the SDE 
	starting at $(s_n,\eta_n)$  and by Proposition \ref{SDEeq}, 
	it follows that
	$f(\omega(\cdot))-f(\eta_n(s_n))-\int_{s_n}^{\cdot}A_rf(\omega)dr$ (see Notation \ref{NotA}) is  a  martingale on $[s_n,+\infty[$
 under $\mathbbm{P}^{s_{n},\eta_{n}}$ vanishing in $s_n$.
	We consider a sequence $(t_p)_{p\in\mathbbm{N}}$ in $D^c$ converging to $t$  
	strictly from the right.
	For all $n,p$ we have
	\begin{equation}\label{Elim0}
	\begin{array}{rcl}
	\mathbbm{E}^{s_{n},\eta_{n}}[f(\omega(t_p))]&=&f(\eta_n(s_n))+\mathbbm{E}^{s_{n},\eta_{n}}\left[\int_{s_n}^{t_p}A_rf(\omega)dr\right]\\
	&=&f(\eta_n(s_n))+\mathbbm{E}^{s_{n},\eta_{n}}\left[\int_{s}^{t_p}A_rf(\omega)dr\right]+\int_{s_n}^{s}\mathbbm{E}^{s_{n},\eta_{n}}[A_rf(\omega)]dr,
	\end{array}
	\end{equation}
	where the second equality holds by Fubini's theorem since $A_rf(\omega)$ is uniformly bounded for $r$ varying
	on bounded intervals. 
	We now pass to the limit in $n$. Since $t_p\notin D$, 
taking into account 
\eqref{NotD_0}, we have	$\mathbbm{P}^{s_{n},\eta_{n}}\circ X_{t_p}^{-1}\underset{n}{\Longrightarrow}\mathbbm{Q}\circ X_{t_p}^{-1}$;
	moreover $f$ is bounded and continuous, so
	\begin{equation}\label{Elim2}
	\mathbbm{E}^{s_{n},\eta_{n}}[f(\omega(t_p))]\underset{n}{\longrightarrow}\mathbbm{E}^{\mathbbm{Q}}[f(\omega(t_p))].
	\end{equation}
	Since $\beta,\sigma,w$ are bounded and $\beta(r,\cdot),\sigma(r,\cdot)$ (resp. $w(r,\cdot,y)$) are continuous for Lebesgue almost all $r$ (resp. $dt\otimes dF$ almost all $(r,y)$) and since $f\in\mathcal{C}_c^{\infty}$, then $\Phi:\omega\longmapsto \int_{s}^{t_p}A_rf(\omega)dr$
 is a bounded continuous functional for the Skorokhod topology, so
	\begin{equation}\label{Elim3}
	\mathbbm{E}^{s_{n},\eta_{n}}\left[\int_{s}^{t_p}A_rf(\omega)dr\right]\underset{n}{\longrightarrow}\mathbbm{E}^{\mathbbm{Q}}\left[\int_{s}^{t_p}A_rf(\omega)dr\right].
	\end{equation}
	Finally since $s_n$ tends to $s$ and $A_rf$ is uniformly bounded for $r$ varying
	on bounded intervals, we have  
	\begin{equation}\label{Elim4}
	\int_{s_n}^{s}\mathbbm{E}^{s_{n},\eta_{n}}[A_rf(\omega)]dr\underset{n}{\longrightarrow}0.
	\end{equation}
	
	Combining relations \eqref{Elim0}, \eqref{Elim1}, \eqref{Elim2}, \eqref{Elim3}, \eqref{Elim4}, for all $p$, we get
	\begin{equation}\label{Elim5}
	\mathbbm{E}^{\mathbbm{Q}}[f(\omega(t_p))]=f(\eta(s))+\mathbbm{E}^{\mathbbm{Q}}\left[\int_{s}^{t_p}A_rf(\omega)dr\right].
	\end{equation}
	We now pass to the limit in $p$. Since $t_p$ tends to $s$ 
	from the right and $\omega$ is right-continuous, the 
	left-hand side of  \eqref{Elim5} tends to $\mathbbm{E}^{\mathbbm{Q}}[f(\omega(s))]$. By dominated convergence, the second 
	term in the right-hand side of \eqref{Elim5} tends to $0$. This yields
	$\mathbbm{E}^{\mathbbm{Q}}[f(\omega(s))]=f(\eta(s))$ and this holds for all $f\in\mathcal{C}_c^{\infty}(\mathbbm{R}^m)$, which implies that $\mathbbm{Q}\circ X_s^{-1}=\delta_{\eta(s)}$. So we have shown that 
	\eqref{Qetat} for $t = s$ and finally \eqref{EQs} since $\omega$ and $\eta$ are cadlag.
	
	We will proceed showing that $\mathbbm{Q}$ solves weakly the SDE
	with respect to $\beta, \sigma, w$ 
	starting in $(s,\eta)$. 
	By Proposition \ref{SDEeq} this holds iff for any  $f\in\mathcal{C}^2_b(\mathbbm{R}^m)$, $f(X_{\cdot})-\int_s^{\cdot}A_rfdr$ is a $(\mathbbm{Q},(\mathcal{F}_t)_{t\in[s,+\infty[})$-martingale.
	We fix such an $f$, some $t\leq u$ in $]s,+\infty[\cap D^c$, $N\in\mathbbm{N}^*$, $t_1\leq\cdots\leq t_N\in[s,t]\cap D^c$ and $\phi_1,\cdots,\phi_N\in\mathcal{C}_b(\mathbbm{R}^m,\mathbbm{R})$.
Taking into account  Proposition \ref{SDEeq},
 since $s<t$, 
for $n$ large enough,
we can suppose that  $f(X_{\cdot})-\int_t^{\cdot}A_rfdr$ is under every  $\mathbbm{P}^{s_{n},\eta_{n}}$ a martingale on the interval $[t,+\infty[$.
	Therefore, for all $n$,
	we have 
	\begin{equation}\label{Elim6}
	\mathbbm{E}^{s_{n},\eta_{n}}\left[\left(f(\omega(u))-f(\omega(t))-\int_t^uA_rf(\omega)dr\right)\underset{1\leq i\leq N}{\Pi}\phi_i(\omega(t_i))\right]=0.
	\end{equation}
We wish to pass to the limit in $n$. By Theorem 12.5 in \cite{billingsley86}, 
for any $r\in\mathbbm{R}_+$, the mapping $X_r$ is continuous on the set 
$C_r:=\{\omega\in \Omega:\omega(r)=\omega(r^-)\}$. By construction of $D$ and since 
$t,u,t_1,\cdots,t_N\notin D$, then $C_t,C_u,C_{t_1},\cdots,C_{t_N}$ are of full $\mathbbm{Q}$-measure hence that 
 $\Phi:=(X,X_u,X_t,X_{t_1},\cdots,X_{t_N})$ is continuous on a set of full $\mathbbm{Q}$-measure. 
By the mapping theorem (see Theorem 2.7 in \cite{billingsley86} for instance), since
 $\mathbbm{P}^{s_{n},\eta_{n}}\underset{n}{\Longrightarrow}\mathbbm{Q}$ and $\Phi$ is continuous on a set of full $\mathbbm{Q}$-measure, then  $\mathbbm{P}^{s_{n},\eta_{n}}\circ\Phi^{-1}\underset{n}{\Longrightarrow}\mathbbm{Q}\circ\Phi^{-1}$,
	meaning  $\mathbbm{P}^{s_{n},\eta_{n}}\circ (X,X_u,X_t,X_{t_1},\cdots,X_{t_N})^{-1}\underset{n}{\Longrightarrow}\mathbbm{Q}\circ (X,X_u,X_t,X_{t_1},\cdots,X_{t_N})^{-1}$. Since 
	$\omega\mapsto \int_t^uA_rf(\omega)dr,f,\phi_1,\cdots,\phi_N$ are bounded continuous functions, the previous convergence in law  allows to
	pass to the limit in $n$ in \eqref{Elim6}   so that
	for any $t\leq u\in]s,+\infty[\cap D^c$ and $t_1,\cdots, t_N\in[s,t]\cap D^c$
	\begin{equation}\label{eqFellerCond}
	\mathbbm{E}^{\mathbbm{Q}}\left[\left(f(\omega(u))-f(\omega(t))-\int_t^uA_rf(\omega)dr\right)\underset{1\leq i\leq N}{\Pi}\phi_i(\omega(t_i))\right]=0.
	\end{equation}
	Equality \eqref{eqFellerCond} still holds if $t=s$ and if some of the values $t,u,t_1,\cdots,t_N$ belong to $D$.
	Indeed to show this statement we approximate from the right 
	such values by  sequences of times  not belonging to $D$ and strictly greater than $s$ and we then use the right-continuity of $\omega$ and 
	the  dominated convergence theorem. 
	
	By use of the functional monotone class theorem (see Theorem 21 in \cite{dellmeyer75} Chapter I), we have
	\begin{equation}\label{eqFellerCond1}
	\mathbbm{E}^{\mathbbm{Q}}\left[\left(f(\omega(u))-f(\omega(t))-\int_t^uA_rf(\omega)dr\right) \mathds{1}_G
	\right]=0,
	\end{equation}
 for any $s\leq t\leq u$ and $G\in\sigma(X_r|r\in[s,t])$. Since $\mathbbm{Q}(\omega^s=\eta^s)=1$ then $\mathcal{F}_s^o$ is $\mathbbm{Q}$-trivial,
	so equality \eqref{eqFellerCond1} holds for all $G=G_s\cap G^s_t$ where $G_s\in\mathcal{F}^o_s$ and $G^s_t\in\sigma(X_r|r\in[s,t])$. Events of such type form a $\pi$-system generating $\mathcal{F}^o_t$ so by Dynkin's Lemma, \eqref{eqFellerCond1} holds for all $G\in\mathcal{F}^o_t$.
	For all $s\leq t\leq u$, then we have
	\begin{equation}
	\mathbbm{E}^{\mathbbm{Q}}\left[\left(f(\omega(u))-f(\omega(t))-\int_t^uA_rf(\omega)_rdr\right)\middle|\mathcal{F}^o_t\right]=0.
	\end{equation}
	So  $f(X)-\int_s^{\cdot}A_rf_rdr$ is a $(\mathbbm{Q},(\mathcal{F}^o_t)_{t\in[s,+\infty[})$-martingale hence a $(\mathbbm{Q},(\mathcal{F}_t)_{t\in[s,+\infty[})$-martingale 
	by Theorem 3 in \cite{dellmeyerB} Chapter VI, 
	that process being right-continuous. 
	\\
	This implies that $\mathbbm{Q}$ is a weak solution of the SDE with coefficients $\beta,\sigma, w$ starting in $(s,\eta)$. As anticipated, since
	the SDE is well-posed  
	for every $(s,\eta)$, we have $\mathbbm{Q}=\mathbbm{P}^{s,\eta}$ and the proof of the first statement is complete. 
	\\
	The second statement follows from the fact that a continuous function is 
	Borel and that $\mathcal{B}(\Lambda)=\Lambda\cap\mathcal{P}ro^o$, see Proposition \ref{BorelLambda}. 
\end{proof}


\begin{appendices}
\section{Proofs of Section \ref{SectionMAF}}

\begin{prooff}. of Proposition \ref{VarQuadAF}.
\\
In the whole proof $t<u$ will be fixed. 
 We consider a sequence of subdivisions of $[t,u]$: $t=t^k_1<t^k_2<\cdots<t^k_k=u$ such that $\underset{i<k}{\text{min }}(t^k_{i+1}-t^k_i)
\underset{k\rightarrow \infty}{\longrightarrow} 0$.
Let $(s,\eta)\in[0,t]\times \Omega$  with corresponding
probability $\mathbbm{P}^{s,\eta}$.
For any $k$, we have $\underset{i< k}{\sum}\left( M_{t^k_i,t^k_{i+1}}\right)^2=\underset{i< k}{\sum}(M^{s,\eta}_{t^k_{i+1}}-M^{s,\eta}_{t^k_i})^2$ $\mathbbm{P}^{s,\eta}$ a.s.,
  so by definition of  quadratic variation we know that $\underset{i< k}{\sum}\left( M_{t^k_i,t^k_{i+1}}\right)^2 \underset{k\rightarrow \infty}{\overset{\mathbbm{P}^{s,\eta}}{\longrightarrow}} [M^{s,\eta}]_u - [M^{s,\eta}]_t$.
In the sequel we will construct  an 
$\mathcal{F}^o_{u}$-measurable random variable $[M]_{t,u}$ such that for any $(s,\eta)\in[0,t]\times \Omega$, 
$\sum_{i< k}\left( M_{t^k_i,t^k_{i+1}}\right)^2 \underset{k\rightarrow \infty}{\overset{\mathbbm{P}^{s,\eta}}{\longrightarrow}} [M]_{t,u}$. In that case
 $[M]_{t,u}$ will then be $\mathbbm{P}^{s,\eta}$ a.s. equal to $[M^{s,\eta}]_u - [M^{s,\eta}]_t$.

Let $\eta \in \Omega$. $[M^{t,\eta}]$ is $\mathbbm{F}^{t,\eta}$-adapted, so 
 $[M^{t,\eta}]_u-[M^{t,\eta}]_t$ is  $\mathcal{F}^{t,\eta}_{u}$-measurable and by Corollary \ref{F0version}, 
there is  an $\mathcal{F}^o_{u}$-measurable variable which depends on $(t,u,\eta)$, 
that we denote $\omega\mapsto a_{t,u}(\eta,\omega)$ such that $a_{t,u}(\eta,\omega) =  [M^{t,\eta}]_u-[M^{t,\eta}]_t, {\mathbb P}^{t,\eta} \ {\rm a.s.}$
We will show below that there is a jointly $\mathcal{F}^o_t\otimes\mathcal{F}^o_u$ -measurable version 
of $(\eta,\omega) \mapsto a_{t,u}(\eta,\omega)$.

For every integer $n\geq 0$, we set $a^n_{t,u}(\eta,\omega):=n\wedge a_{t,u}(\eta,\omega)$ 
which is in particular limit in probability of $n\wedge\underset{i\leq k}{\sum}\left( M_{t^k_i,t^k_{i+1}}\right)^2$ under $\mathbbm{P}^{t,\eta}$.
For any integers $k,n$ and any $\eta\in \Omega$, we define the finite positive measures $\mathbbm{Q}^{k,n,\eta}$, $\mathbbm{Q}^{n,\eta}$ and $\mathbbm{Q}^{\eta}$ on $(\Omega,\mathcal{F}^o_{u})$ by
\begin{enumerate}
\item $\mathbbm{Q}^{k,n,\eta}(F) := \mathbbm{E}^{t,\eta}\left[\mathds{1}_F\left(n\wedge\underset{i< k}{\sum}\left( M_{t^k_i,t^k_{i+1}}\right)^2\right)\right]$; 
\item $\mathbbm{Q}^{n,\eta}(F): = \mathbbm{E}^{t,\eta}[\mathds{1}_F\left(a^n_{t,u}(\eta,\omega)\right)]$;
\item $\mathbbm{Q}^{\eta}(F) := \mathbbm{E}^{t,\eta}[\mathds{1}_F\left(a_{t,u}(\eta,\omega)\right)]$.
\end{enumerate}
When $k$ and $n$ are fixed integers and  $F$ is a fixed event, by Remark \ref{Borel}, 
\\
$\eta\longmapsto\mathbbm{E}^{t,\eta}\left[F\left(n\wedge\underset{i< k}{\sum}\left( M_{t^k_i,t^k_{i+1}}\right)^2\right)\right],$ is $\mathcal{F}^o_t$ -measurable. 

Then $n\wedge\underset{i< k}{\sum}\left( M_{t^k_i,t^k_{i+1}}\right)^2\overset{\mathbbm{P}^{t,\eta}}{\underset{k\rightarrow \infty}{\longrightarrow}} a^n_{t,u}(\eta,\omega)$, and this sequence is uniformly bounded by the constant $n$, so the convergence takes place in $L^1$, therefore 
$\eta\longmapsto\mathbbm{Q}^{n,\eta}(F)$ is also $\mathcal{F}^o_t$-measurable as the pointwise limit in $k$ of the functions 
$\eta\longmapsto\mathbbm{Q}^{k,n,\eta}(F)$.
 Similarly, $a^n_{t,u}(\eta,\omega)\underset{n\rightarrow \infty}{\overset{\mathbbm{P}^{t,\eta}-a.s.}{\longrightarrow}}a_t(\eta,\omega)$ and is non-decreasing, so by 
 monotone convergence theorem,  the function
$\eta\longmapsto\mathbbm{Q}^{\eta}(F)$ is $\mathcal{F}^o_t$-measurable
being  a pointwise limit in $n$ of the functions
 $\eta\longmapsto\mathbbm{Q}^{n,\eta}(F)$.

We make then use of Theorem 58 Chapter V in \cite{dellmeyerB}:
 the  property above, the separability of $\mathcal{F}$  and
the fact that for any $\eta$,  
 $\mathbbm{Q}^{\eta}\ll \mathbbm{P}^{t,\eta}$ by item 3. above, imply 
the existence of a jointly measurable
 (for $\mathcal{F}^o_t\otimes\mathcal{F}^o_u$) version
  of   $(\eta,\omega)\mapsto a_{t,u}(\eta,\omega)$.
 That version will still be denoted
by the same symbol.
 We recall that
for any $\eta$,  $a_{t,u}(\eta,\cdot)$ is the Radon-Nykodim density
of  $\mathbbm{Q}^{\eta}$ 
with respect to $\mathbbm{P}^{t,\eta}$.

We can now set $[M]_{t,u}(\omega):=a_{t,u}(\omega,\omega)$,
 which is a well-defined $\mathcal{F}^o_{u}$-measurable random variable.
Since $a_{t,u}$ is $\mathcal{F}^o_t$-measurable in the first variable and 
for any $\eta$
$\mathbbm{P}^{t,\eta}(\omega^t=\eta^t)=1$  we have
 the equalities 
\begin{equation} \label{B3bis}
[M]_{t,u}(\omega) =a_{t,u}(\omega,\omega)=a_{t,u}(\eta,\omega)=[M^{t,\eta}]_u(\omega) - [M^{t,\eta}]_t(\omega)  \ \mathbbm{P}^{t,\eta} {\rm a.s.}
\end{equation}

We can then show that
\begin{equation} \label{Etsx}
[M]_{t,u} = [M^{s,\eta}]_u - [M^{s,\eta}]_t\,\text{ }\,\mathbbm{P}^{s,\eta} 
\text{ a.s.},
\end{equation}
holds   for every $(s,\eta)\in[0,t]\times \Omega$, and not just in the
 case $s=t$ 
that  we have just established in \eqref{B3bis}. 
This can be done  reasoning as in the proof of Proposition 4.4 in 
\cite{paperAF}, replacing the use of the Markov property with item 3. of Definition \ref{DefCondSyst}.

So we have built an $\mathcal{F}^o_{u}$-measurable variable $[M]_{t,u}$ such that under any $\mathbbm{P}^{s,\eta}$ with $s\leq t$, 
$[M^{s,\eta}]_u - [M^{s,\eta}]_t=[M]_{t,u}$ a.s. and this concludes the proof.
\end{prooff}

\begin{prooff}. of Proposition \ref{AngleBracketAF}.
\\
We start defining $A_{t,t} = 0$ for every $t \ge 0$.
We then recall a property of the predictable dual projection
 which we will have to extend slightly. 
Let us fix $(s,\eta)$ and the corresponding stochastic basis 
 $(\Omega,\mathcal{F}^{s,\eta},\mathbbm{F}^{s,\eta},\mathbbm{P}^{s,\eta})$. 
For any $F\in \mathcal{F}^{s,\eta}$, let $N^{s,\eta,F}$ be the cadlag version of the martingale 
$r\longmapsto\mathbbm{E}^{s,\eta}[\mathds{1}_F|\mathcal{F}_r]$. Then for any $0\leq t\leq u$, the predictable projection of the process $r\mapsto \mathds{1}_F\mathds{1}_{[t,u[}(r)$ is $r\mapsto N^{s,\eta,F}_{r^-}\mathds{1}_{[t,u[}(r)$, see the proof of Theorem 43 Chapter VI in \cite{dellmeyerB}. Therefore by definition 
of the dual predictable projection (see Definition 73 Chapter VI in \cite{dellmeyerB}), for any $0\leq t\leq u$ and $F\in\mathcal{F}^{s,\eta}$ we have
$\mathbbm{E}^{s,\eta}\left[\mathds{1}_F(A^{s,\eta}_{u^-} - A^{s,\eta}_t)\right]=\mathbbm{E}^{s,\eta}\left[\int_t^{u^-} N^{s,\eta,F}_{r^-}dB^{s,\eta}_r\right]$.
Then, at fixed $t,u,F$, since for every $\epsilon>0$ we have 
$\mathbbm{E}^{s,\eta}\left[\mathds{1}_F(A^{s,\eta}_{(u+\epsilon)^-} - A^{s,\eta}_t)\right]=\mathbbm{E}^{s,\eta}\left[\int_t^{(u+\epsilon)^-} N^{s,\eta,F}_{r^-}dB^{s,\eta}_r\right]$, 
letting $\epsilon$ tend to zero we obtain by dominated convergence theorem that
\begin{equation}\label{dualprojection}
\mathbbm{E}^{s,\eta}\left[\mathds{1}_F(A^{s,\eta}_{u} - A^{s,\eta}_t)\right]=\mathbbm{E}^{s,\eta}\left[\int_t^{u} N^{s,\eta,F}_{r^-}dB^{s,\eta}_r\right],
\end{equation}  
taking into account the right-continuity of $A^{s,\eta},B^{s,\eta}$ and the fact that they are both non-decreasing processes with ${\mathcal L}^1$ -terminal value.

For any $F\in\mathcal{F}$, we introduce the process $N^F:(t,\omega)\longmapsto \mathbbm{P}^{t,\omega}(F)$. $N^F$ takes values in $[0,1]$ for every $(t,\omega)$,  and by Definition \ref{DefCondSyst},
 it is an $\mathbbm{F}^o$-progressively measurable process such that for any $(s,\eta)\in\mathbbm{R}_+\times \Omega$, $N^{s,\eta,F}$ is 
 a $\mathbbm{P}^{s,\eta}$ cadlag version of $N^F$ on $[s,+\infty[$.

For the rest of the proof, $0\leq t<u $ are fixed.
Following the same proof than that of Lemma 4.9 in \cite{paperAF} but with parameter $(s,x)$ replaced with $(s,\eta)$, we obtain the following.
\begin{lemma}\label{commonint}
Let  $F\in\mathcal{F}$.  There exists an $\mathcal{F}_u$-measurable random variable
which we will call $\int_t^u N^F_{r^-}dB_r$ such that for any 
$(s,\eta)\in[0,t]\times \Omega$, \\ $\int_t^u N^F_{r^-}dB_r=\int_t^u N^{s,\eta,F}_{r^-}dB^{s,\eta}_r$ $\mathbbm{P}^{s,\eta}$ a.s. 
\end{lemma}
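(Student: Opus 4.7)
The plan is to adapt the construction used in the proof of Proposition \ref{VarQuadAF} to the present Stieltjes-integral setting. The idea is to build $\int_t^u N^F_{r^-}\, dB_r$ from left-endpoint Riemann sums along a sequence of partitions $t = t^k_1 < \cdots < t^k_k = u$ with vanishing mesh, namely $S_k := \sum_{i<k} N^F_{t^k_i}\, B_{t^k_i, t^k_{i+1}}$. Each $S_k$ is $\mathcal{F}^o_u$-measurable, since $N^F_{t^k_i}$ is $\mathcal{F}^o_{t^k_i}$-measurable by $\mathbbm{F}^o$-progressive measurability of $N^F$, while $B_{t^k_i, t^k_{i+1}}$ is $\mathcal{F}^o_{t^k_{i+1}}$-measurable by definition of a path-dependent AF. Under $\mathbbm{P}^{t,\eta}$ we have $S_k = \sum_{i<k} N^{t,\eta,F}_{t^k_i}(B^{t,\eta}_{t^k_{i+1}} - B^{t,\eta}_{t^k_i})$ a.s., and since $N^{t,\eta,F}$ is cadlag with values in $[0,1]$ while $B^{t,\eta}$ is non-decreasing with $\mathcal{L}^1$-terminal value, the classical Riemann-Stieltjes convergence together with dominated convergence yields $S_k \to \int_t^u N^{t,\eta,F}_{r^-}\, dB^{t,\eta}_r$ in $L^1(\mathbbm{P}^{t,\eta})$.

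Following the scheme of Proposition \ref{VarQuadAF}, I would then extract from this convergence a jointly $\mathcal{F}^o_t \otimes \mathcal{F}^o_u$-measurable version of the integral. By Corollary \ref{F0version} there exists an $\mathcal{F}^o_u$-measurable representative $\omega \mapsto a_{t,u}(\eta, \omega)$ of $\int_t^u N^{t,\eta,F}_{r^-}\, dB^{t,\eta}_r$ under $\mathbbm{P}^{t,\eta}$. Introducing the truncations $a^n_{t,u}(\eta, \omega) := n \wedge a_{t,u}(\eta, \omega)$ together with the finite positive measures $\mathbbm{Q}^{k,n,\eta}(G) := \mathbbm{E}^{t,\eta}[\mathds{1}_G (n \wedge S_k)]$, $\mathbbm{Q}^{n,\eta}(G) := \mathbbm{E}^{t,\eta}[\mathds{1}_G\, a^n_{t,u}(\eta, \omega)]$ and $\mathbbm{Q}^\eta(G) := \mathbbm{E}^{t,\eta}[\mathds{1}_G\, a_{t,u}(\eta, \omega)]$ on $(\Omega, \mathcal{F}^o_u)$, the map $\eta \mapsto \mathbbm{Q}^{k,n,\eta}(G)$ is $\mathcal{F}^o_t$-measurable by Remark \ref{Borel}. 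This property transfers to $\eta \mapsto \mathbbm{Q}^\eta(G)$ by dominated convergence in $k$ followed by monotone convergence in $n$. Since $\mathcal{F}$ is separable (Notation \ref{canonicalspace}) and $\mathbbm{Q}^\eta \ll \mathbbm{P}^{t,\eta}$ with density $a_{t,u}(\eta, \cdot)$, Theorem 58 Chapter V of \cite{dellmeyerB} provides a jointly $\mathcal{F}^o_t \otimes \mathcal{F}^o_u$-measurable version of $a_{t,u}$, still denoted by the same symbol.

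I then set $\int_t^u N^F_{r^-}\, dB_r(\omega) := a_{t,u}(\omega, \omega)$, which is $\mathcal{F}^o_u$-measurable (hence $\mathcal{F}_u$-measurable) via the diagonal map, using $\mathcal{F}^o_t \subset \mathcal{F}^o_u$. The identity at $s = t$ is immediate, because $a_{t,u}$ is $\mathcal{F}^o_t$-measurable in its first argument and $\mathbbm{P}^{t,\eta}(\omega^t = \eta^t) = 1$. For general $(s, \eta) \in [0,t] \times \Omega$, the plan is to replicate the final reduction step from the proof of Proposition \ref{VarQuadAF}: by Hypothesis \ref{HypClass}, conditioning under $\mathbbm{P}^{s,\eta}$ with respect to $\mathcal{F}_t$ brings in the measures $\mathbbm{P}^{t,\omega}$, which reduces the identity to the already established $s=t$ case, using the AF property to ensure that $B_{t,u}$ is $\mathbbm{P}^{s,\eta}$-a.s. equal to $B^{s,\eta}_u - B^{s,\eta}_t$ and $\mathbbm{P}^{t,\omega}$-a.s. equal to $B^{t,\omega}_u - B^{t,\omega}_t$ for $\mathbbm{P}^{s,\eta}$-a.e. $\omega$. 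The main obstacle is the joint-measurability step via Theorem 58 of \cite{dellmeyerB}; a secondary subtlety lies in this final $s \le t$ reduction, where one must carefully track which probability governs each conditional version of the integrand and the integrator when passing between $\mathbbm{P}^{s,\eta}$ and $\mathbbm{P}^{t,\omega}$.
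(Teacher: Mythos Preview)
Your proposal is correct and follows essentially the same approach as the paper, which simply refers to Lemma 4.9 in \cite{paperAF} (the Markovian analogue) with $(s,x)$ replaced by $(s,\eta)$. The scheme you outline --- Riemann sums $S_k$ built from the $\mathcal{F}^o$-measurable pieces $N^F_{t^k_i}$ and $B_{t^k_i,t^k_{i+1}}$, convergence under each $\mathbbm{P}^{t,\eta}$, joint measurability via Theorem 58 Chapter V of \cite{dellmeyerB}, then the diagonal construction and the conditioning reduction for $s<t$ --- is precisely the machinery used throughout the Appendix (e.g.\ in the proofs of Propositions \ref{VarQuadAF} and \ref{AngleBracketAF}) and is what the cited lemma carries out.

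One minor point worth making explicit in your final step: for $s<t$ you can avoid tracking integrators under two different probabilities by noting directly that, under $\mathbbm{P}^{s,\eta}$, the same Riemann sums satisfy $S_k=\sum_{i<k} N^{s,\eta,F}_{t^k_i}(B^{s,\eta}_{t^k_{i+1}}-B^{s,\eta}_{t^k_i})$ a.s.\ (since $N^{s,\eta,F}$ is a $\mathbbm{P}^{s,\eta}$-version of $N^F$ on $[s,\infty[\supset[t,u]$ and $B$ is an AF), hence $S_k\to\int_t^u N^{s,\eta,F}_{r^-}\,dB^{s,\eta}_r$ in $\mathbbm{P}^{s,\eta}$-probability; meanwhile the conditioning argument you sketch shows $S_k\to a_{t,u}(\omega,\omega)$ in $\mathbbm{P}^{s,\eta}$-probability, and uniqueness of limits concludes.
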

\begin{remark} \label{Rsx}
By definition,  the r.v.
 $\int_t^u N^F_{r^-}dB_r$ will not depend on  $(s,\eta)$.
\end{remark}

We continue now the proof of
Proposition \ref{AngleBracketAF} by showing  that for given $ 0 \leq t < u$ there is  
an $\mathcal{F}^o_u$-measurable r.v. $A_{t,u}$ 
such that for every $(s,\eta)\in[0,t]\times \Omega$,  $(A^{s,\eta}_u-A^{s,\eta}_t)=A_{t,u}$ $\quad\mathbbm{P}^{s,\eta}$ a.s.

Similarly to what we did with the quadratic variation in 
Proposition  \ref{VarQuadAF}, we start  noticing that for any $\eta\in \Omega$, being
$(A^{t,\eta}_u-A^{t,\eta}_t)$  $\mathcal{F}^{t,\eta}_{u}$-measurable, there exists by Corollary \ref{F0version} an $\mathcal{F}^o_{u}$-measurable r.v. $\omega\mapsto a_{t,u}(\eta,\omega)$ such that 
\begin{equation}\label{E425}
	a_{t,u}(\eta,\omega)=A^{t,\eta}_u(\omega)-A^{t,\eta}_t(\omega)\quad \mathbbm{P}^{t,\eta}\text{ a.s.}
\end{equation}
As in the proof of Proposition  \ref{VarQuadAF}, we show below the existence of a jointly-measurable version of  $(\eta,\omega)\mapsto a_{t,u}(\eta,\omega)$.

For  every $\eta\in \Omega$  we define 
on $\mathcal{F}^o_{u}$ the positive measure
\begin{equation} \label{E424}
\mathbbm{Q}^{\eta}:F\longmapsto
\mathbbm{E}^{t,\eta}\left[\mathds{1}_Fa_{t,u}(\eta,\omega)\right] =
\mathbbm{E}^{t,\eta}\left[\mathds{1}_F(A^{t,\eta}_u - A^{t,\eta}_t)\right].
\end{equation}
By Lemma \ref{commonint} and \eqref{dualprojection}, for every
 $F\in \mathcal{F}^o_{u}$ we have
\begin{equation}\label{E426}
	\mathbbm{Q}^{\eta}(F)=\mathbbm{E}^{t,\eta}\left[\int_t^u N^F_{r^-}dB_r\right],
\end{equation}
 where we recall that $\int_t^u N^F_{r^-}dB_r$  does not depend on $\eta$. 
 So by Remark \ref{Borel},  $\eta\longmapsto\mathbbm{Q}^{\eta}(F)$ is $\mathcal{F}^o_t$-measurable for any $F$. Moreover, by \eqref{E424}
 for any $\eta$,
 $\mathbbm{Q}^{\eta}\ll \mathbbm{P}^{t,\eta}$. Again by Theorem 58 Chapter V in \cite{dellmeyerB}, there exists a  version $(\eta,\omega)\mapsto a_{t,u}(\eta,\omega)$ 
-measurable for $\mathcal{F}^o_t\otimes \mathcal{F}^o_u$ of the 
related Radon-Nikodym densities.

 We can now set $A_{t,u}(\omega) := a_{t,u}(\omega,\omega)$ 
which is then an  $\mathcal{F}^o_{u}$-measurable r.v.  
\\
It yields for any $\eta\in\Omega$
\begin{equation}\label{E427}
	A_{t,u}(\omega) = a_{t,u}(\omega,\omega) = a_{t,u}(\eta,\omega)= A^{t,\eta}_u(\omega) - A^{t,\eta}_t(\omega)\quad \mathbbm{P}^{t,\eta}\text{ a.s.}
\end{equation}
Indeed the second equality holds
given that $a_{t,u}$ is $\mathcal{F}^o_t$-measurable with respect to the first variable, taking into account that $\mathbbm{P}^{t,\eta}(\omega^t=\eta^t) = 1$;
 the third equality follows by \eqref{E425}.

We now set $s<t$ and $\eta\in \Omega$. We want to show that  we still have 
\\
$A_{t,u} =  A^{s,\eta}_u - A^{s,\eta}_t$ $\mathbbm{P}^{s,\eta}$ a.s. So we
 consider  $F\in\mathcal{F}^o_{u}$; 
 we compute
\begin{equation}\label{E428}
 \begin{array}{rclcl}
    &&\mathbbm{E}^{s,\eta}\left[\mathds{1}_F(A^{s,\eta}_u - A^{s,\eta}_t)\right] &=&\mathbbm{E}^{s,\eta}\left[\int_t^u N^F_{r^-}dB_r\right]\\
    &=&\mathbbm{E}^{s,\eta}\left[\mathbbm{E}^{s,\eta}\left[\int_t^u N^{F}_{r^-}dB_r|\mathcal{F}_t\right]\right]
    &=&\mathbbm{E}^{s,\eta}\left[\mathbbm{E}^{t,\omega}\left[\int_t^u N^F_{r^-}dB_r\right]\right]\\
    &=&\mathbbm{E}^{s,\eta}\left[\mathbbm{E}^{t,\omega}\left[\mathds{1}_FA_{t,u}\right]\right]
    &=&\mathbbm{E}^{s,\eta}\left[\mathbbm{E}^{s,\eta}\left[\mathds{1}_FA_{t,u}|\mathcal{F}_t\right]\right]\\
    &=&\mathbbm{E}^{s,\eta}\left[\mathds{1}_FA_{t,u}\right].&&
\end{array}
\end{equation}
Indeed, the first equality comes from \eqref{dualprojection} and Lemma \ref{commonint}; concerning the fourth equality we recall that, by \eqref{E424}, \eqref{E426} and 
\eqref{E427}, we have
$\mathbbm{E}^{t,\omega}\left[\int_t^u N^F_{r^-}dB_r\right]=\mathbbm{E}^{t,\omega}\left[\mathds{1}_FA_{t,u}\right]$ for all $\omega$. The third and fifth equalities come from Remark \ref{Borel}. 

Since adding $\mathbbm{P}^{s,\eta}$-null sets does not change the validity of \eqref{E428}, by Proposition \ref{CompFiltIni}  for
 any $F\in\mathcal{F}^{s,\eta}_{u}$ we have $\mathbbm{E}^{s,\eta}\left[\mathds{1}_F(A^{s,\eta}_u - A^{s,\eta}_t)\right]=\mathbbm{E}^{s,\eta}\left[\mathds{1}_FA_{t,u}\right]$.

Finally, since both $A^{s,\eta}_u - A^{s,\eta}_t$ and $A_{t,u}$ are $\mathcal{F}^{s,\eta}_{u}$-measurable, 
we can conclude that $A^{s,\eta}_u - A^{s,\eta}_t=A_{t,u}$  $\mathbbm{P}^{s,\eta}$ a.s.
\\
We emphasize that this holds for any $t\leq u$
 and $(s,\eta)\in[0,t]\times \Omega$, $(A_{t,u})_{(t,u)\in\Delta}$ is the desired path-dependent AF, which ends the proof of 
Proposition \ref{AngleBracketAF}.
\end{prooff}


\begin{prooff}. of Proposition \ref{RadonDerivAF}.
\\
We set 
\begin{equation} \label{EDC} 
C_{t,u}=A_{t,u} +(V_u-V_t)+(u-t),
\end{equation}
 which is a path-dependent AF with cadlag versions $C^{s,\eta}_t=A^{s,\eta}_t+V_t+t$
 and we start by showing the statement
for $A$ and $C$ instead of $A$ and $V$.
\\
The reason of the  introduction of the
 intermediary function $C$ is  that for any $u>t$ we have
 $\frac{A^{s,\eta}_u-A^{s,\eta}_t}{C^{s,\eta}_u-C^{s,\eta}_t}\in[0,1]$; that property  will 
be used extensively in connections with the application
of  dominated convergence theorem. 
\\
Since $A^{s,\eta}$ is non-decreasing for any $(s,\eta)\in\mathbbm{R}_+\times \Omega$, $A$ can be taken positive (in the sense that $A_{t,u}(\omega)\geq 0$ for any $(t,u)\in\Delta$ and $\omega\in\Omega$) by considering $A^+$ (defined by $(A^+)_{t,u}(\omega):=A_{t,u}(\omega)^+$) instead of $A$.
On $\mathbbm{R}_+$ we set   
\begin{eqnarray} \label{EB29}
K_t &=& \underset{n\rightarrow \infty}{\text{liminf }}\frac{A_{t,t+\frac{1}{n}}}{A_{t,t+\frac{1}{n}}+\frac{1}{n}+(V_{t+\frac{1}{n}}-V_t)} \nonumber \\
&& \\
&=&\underset{n\rightarrow \infty}{\text{lim }}\underset{m\rightarrow \infty}{\text{lim  }}\underset{n\leq p \leq m}{\text{min }}\frac{A_{t,t+\frac{1}{p}}}{A_{t,t+\frac{1}{p}}+\frac{1}{p}+(V_{t+\frac{1}{p}}-V_t)}. \nonumber
\end{eqnarray}
This liminf always exists and belongs to $[0,1]$ since the sequence belongs to $[0,1]$. For any $(s,\eta)\in\mathbbm{R}_+\times \Omega$, since for all $t\geq s$ and $n\geq 0$, 
\\
$A_{t,t+\frac{1}{n}} = A^{s,\eta}_{t+\frac{1}{n}}-A^{s,\eta}_t$ $\mathbbm{P}^{s,\eta}$ a.s., then $K^{s,\eta}$ defined by
$K^{s,\eta}_t:=\underset{n\rightarrow \infty}{\text{liminf }}\frac{A^{s,\eta}_{t+\frac{1}{n}}-A^{s,\eta}_t}{C^{s,\eta}_{t+\frac{1}{n}}-C^{s,\eta}_t}$ is a $\mathbbm{P}^{s,\eta}$-version of $K$, for  $ t \in  [s,+\infty[$.
\\
By  Lebesgue Differentiation theorem (see Theorem 12 Chapter XV in \cite{dellmeyerD} for a version of the theorem with a general atomless measure),
 for any $(s,\eta)$, for  $\mathbbm{P}^{s,\eta}$-almost all  $\omega$,
since
$ dC^{s,\eta}(\omega)$ is absolutely continuous with respect
to $dA^{s,\eta}(\omega)$, 
 $K^{s,\eta}(\omega)$ is a density of $dA^{s,\eta}(\omega)$ with respect to $dC^{s,\eta}(\omega)$. 
\\
For any $t\geq 0$, $K_t$ is measurable with respect to $\underset{n\geq 0}{\bigcap} \mathcal{F}^o_{t+\frac{1}{n}}=\mathcal{F}_{t}$, 
by definition of the canonical filtration. 
For any $(t,\omega)\in\mathbbm{R}_+\times \Omega$,
we now set
\begin{equation} \label{EL320} 
k_t(\omega):=\mathbbm{E}^{t,\omega}[K_t].
\end{equation} 
Remark \ref{Borel} implies that $k$ is an $\mathbbm{F}^o$-adapted process. 
The path-dependent canonical class  verifies Hypothesis \ref{HypClass}, and 
 $K_t$ is $\mathcal{F}_{t}$-measurable then for any $(s,\eta)\in[t,+\infty[\times \Omega$, $K_t(\omega)=\mathbbm{E}^{s,\eta}[K_t|\mathcal{F}_t](\omega)=\mathbbm{E}^{t,\omega}[K_t]=k_t(\omega)$ $\mathbbm{P}^{s,\eta}$-a.s.:
 hence 
 $k$ is on $[s,+\infty[$ a $\mathbbm{P}^{s,\eta}$-version of $K$, and therefore of $K^{s,\eta}$. 

The next main object of this proof is to show that $k$ is an $\mathbbm{F}^o$-progressively measurable 
 process. 
For any integers $(n,m)$, we define 
\begin{equation*}
k^{n,m}:(t,\eta)\mapsto\mathbbm{E}^{t,\eta}\left[\underset{n\leq p \leq m}{\text{min }}\frac{A_{t,t+\frac{1}{p}}}{A_{t,t+\frac{1}{p}}+\frac{1}{p}+(V_{t+\frac{1}{p}}-V_t)}\right],
\end{equation*}
and for all $n$, 
\begin{equation} \label{EB32Pre}
k^n:(t,\eta)\mapsto\mathbbm{E}^{t,\eta}\left[\underset{p\geq n}{\text{inf }}\frac{A_{t,t+\frac{1}{p}}}{A_{t,t+\frac{1}{p}}+\frac{1}{p}+(V_{t+\frac{1}{p}}-V_t)}\right].
\end{equation}
We start  showing that 
\begin{equation} \label{EB32}
\tilde{k}^{n,m}:\begin{array}{rcl}
    ((s,\eta),t)&\longmapsto& \mathbbm{E}^{s,\eta}\left[\underset{n\leq p\leq m}{\text{min }}\frac{A_{t,t+\frac{1}{p}}}{A_{t,t+\frac{1}{p}}+\frac{1}{p}+(V_{t+\frac{1}{p}}-V_t)}\right]\mathds{1}_{s\leq t},\\ \relax 
    (\mathbbm{R}_+\times \Omega)\times\mathbbm{R}_+  &\longrightarrow& [0,1],
\end{array}
\end{equation}
is measurable with respect to $\mathcal{P}ro^o\otimes\mathcal{B}(\mathbbm{R}_+)$. In order to do so, we will show that it is measurable in the first variable $(s,\eta)$, and right-continuous in the second variable $t$, and conclude with Lemma 4.12 in \cite{paperAF}.
\\ 
We fix $t\in\mathbbm{R}_+$. Since the path-dependent canonical class is progressive,  
by Remark \ref{Borel},  the map
\begin{eqnarray} \label{EPM}
(s,\eta)&\longmapsto&\mathbbm{E}^{s,\eta}\left[\underset{n\leq p\leq m }{\text{min }}\frac{A_{t,t+\frac{1}{p}}}{A_{t,t+\frac{1}{p}}+\frac{1}{p}+(V_{t+\frac{1}{p}}-V_t)}\right]\\ \mathbbm{R}_+\times \Omega&\longrightarrow& [0,1], \nonumber
\end{eqnarray}
 is measurable with respect to $\mathcal{P}ro^o$. The map $(s,\eta)\longmapsto \mathds{1}_{[t,+\infty[}(s)$ is also trivially  measurable with respect to $\mathcal{P}ro^o$; therefore the product of the latter map and \eqref{EPM}, 
that we denote by $\tilde{k}(\cdot,\cdot,t)$ is also measurable 
with respect to $\mathcal{P}ro^o$.
 Moreover, if we fix $(s,\eta)\in \mathbbm{R}_+\times \Omega$, reasoning exactly as in the proof of Proposition 4.13 in \cite{paperAF}
  we  see that $t\mapsto \tilde{k}^{n,m}(s,\eta,t)$ is right-continuous, which by Lemma 
 4.12 in \cite{paperAF} implies the joint measurability of $\tilde{k}^{n,m}$.

Since $k^{n,m}(t,\eta)=\tilde{k}^{n,m}(t,t,\eta)$, and since $(t,\eta)\mapsto(t,\eta,t)$ is obviously\\ $(\mathcal{P}ro^o,\mathcal{P}ro^o\otimes\mathcal{B}(\mathbbm{R}_+))$-measurable, then by composition we can deduce that for any $n,m$, $k^{n,m}$ is an $\mathbbm{F}^o$-progressively measurable process. By the dominated convergence theorem, $k^{n,m}$ tends pointwise to $k^n$ when $m$ goes to infinity, so  $k^n$ also is an $\mathbbm{F}^o$-progressively measurable process 
for every $n$. 
Finally, since $K_t = \underset{n\rightarrow \infty}{\text{lim }}\underset{p\geq n}{\text{inf }}\frac{A_{t,t+\frac{1}{p}}}{A_{t,t+\frac{1}{p}}+\frac{1}{p}+(V_{t+\frac{1}{p}}-V_t)}$,
taking the expectation and again by the dominated 
convergence theorem, $k^n$
(defined in \eqref{EB32Pre}) 
 tends pointwise to $k$ (defined in \eqref{EL320}),
 when $n$ goes to infinity, so $k$ is an $\mathbbm{F}^o$-progressively measurable process.
Considering that $(t,u,\omega)\mapsto V_u-V_t$ also trivially 
defines a non-negative 
non-decreasing path-dependent AF absolutely continuous with respect to $C$,
defined in \eqref{EDC},
 we proceed similarly as at the beginning of the proof,
 replacing the path-dependent AF $A$ with $V$. 

Let the process $K'$ be defined by
$K'_t = \underset{n\rightarrow \infty}{\text{liminf }}\frac{V_{t+\frac{1}{n}}-V_t}{A_{t,t+\frac{1}{n}}+\frac{1}{n}+(V_{t+\frac{1}{n}}-V_t)}$,
and for any $(s,\eta)$, let $K'^{s,\eta}$ be defined on $[s,+\infty[$ by
$K'^{s,\eta}_t = \underset{n\rightarrow \infty}{\text{liminf }}\frac{V_{t+\frac{1}{n}}-V_t}{A^{s,\eta}_{t+\frac{1}{n}}-A^{s,\eta}_t+\frac{1}{n}+(V_{t+\frac{1}{n}}-V_t)}$.
Then, for any $(s,\eta)$, $K'^{s,\eta}$ on $[s,+\infty[$ is
 a $\mathbbm{P}^{s,\eta}$-version of $K'$, and 
it constitutes  a density of $dV$ with respect to $dC^{s,\eta}(\omega)$
 on $[s,+\infty[$, for almost all $\omega$. One shows then the  existence of an $\mathbbm{F}^o$-progressively measurable process $k'$ such that for any $(s,\eta)$, $k'$ is a $\mathbbm{P}^{s,\eta}$-version of $K'$ and 
of $K'^{s,\eta}$ on $[s,+\infty[$.

By the considerations after (\ref{EB29}), 
for any $(s,\eta)$,   under $\mathbbm{P}^{s,\eta},$ we can write
$\left\{\begin{array}{rcl}
A^{s,\eta}&=&\int_s^{\cdot\vee s}K^{s,\eta}_rdC^{s,\eta}_r \\
V_{\cdot\vee s} - V_s &=& \int_s^{\cdot\vee s} K'^{s,\eta}_rdC^{s,\eta}_r.
\end{array}\right.$
Now since $dA^{s,\eta}\ll dV$,  we have for $\mathbbm{P}^{s,\eta}$ almost all $\omega$ that the set
 $\{r \in [s,+\infty[:  |K'^{s,\eta}_r(\omega)=0\}$ is negligible with respect to 
 $dV$ so also for $dA^{s,\eta}(\omega)$ and therefore
 we can write
\begin{equation*}
\begin{array}{rcl}
A^{s,\eta}&=&\int_s^{\cdot\vee s}K^{s,\eta}_rdC^{s,\eta}_r \\
&=&\int_s^{\cdot\vee s}\frac{K^{s,\eta}_r}{ K'^{s,\eta}_r}
\mathds{1}_{\{K'^{s,\eta}_r\neq 0\}}K'^{s,\eta}_rdC^{s,\eta}_r 
+ \int_s^{\cdot\vee s}   \mathds{1}_{\{K'^{s,\eta}_r = 0\}}  dA^{s,\eta}_r 
  \\ 
&=&\int_s^{\cdot\vee s}\frac{K^{s,\eta}_r}{K'^{s,\eta}_r}\mathds{1}_{\{K'^{s,\eta}_r\neq 0\}}dV_r,
\end{array}
\end{equation*}
where we use the convention  that
for any two functions $\phi,\psi$ then $\frac{\phi}{\psi}\mathds{1}_{\psi\neq 0}$ is defined by 
$\frac{\phi}{\psi}\mathds{1}_{\{\psi\neq 0\}}(x)=\left\{\begin{array}{l}
\frac{\phi(x)}{\psi(x)}\text{ if } \psi(x)\neq 0\\
0\text{ if } \psi(x)=0.
\end{array}\right.$

We now set 
$h:=\frac{k}{k'}\mathds{1}_{\{k'_r\neq 0\}}$ which is an $\mathbbm{F}^o$-progressively measurable process, and clearly for any $(s,\eta)$, $h$ is a $\mathbbm{P}^{s,\eta}$-version of $H^{s,\eta}:=\frac{K^{s,\eta}}{K'^{s,\eta}}\mathds{1}_{\{K'^{s,\eta}\neq 0\}}$ on $[s,+\infty[$. So by Lemma 5.12 in \cite{paper1preprint},  
$H^{s,\eta}=h$ $dV\otimes d\mathbbm{P}^{s,\eta}$ a.e. on $[s,+\infty[$ 
and finally we have shown that under any $\mathbbm{P}^{s,\eta}$, 
$A^{s,\eta}=\int_s^{\cdot\vee s}h_rdV_r$.
\end{prooff}

\end{appendices}
{\bf ACKNOWLEDGEMENTS.}
The research of the first named author was provided 
by a PhD fellowship (AMX) of the Ecole Polytechnique.
The contribution of the second named author
 was partially supported by the grant 346300 for IMPAN from the Simons Foundation and the matching 2015-2019 Polish MNiSW fund.

\bibliographystyle{plain}
\bibliography{../biblioPhDBarrasso_bib/biblioPhDBarrasso}

\end{document}